\documentclass[reqno,english]{amsart}

\usepackage{macros}
\usepackage{algorithm}
\usepackage{algpseudocode}

\newcommand{\stream}{\mathcal{S}}
\newcommand{\alg}{\mathcal{A}}
\newcommand{\tree}{\mathcal{T}}
\newcommand{\ltree}{\mathcal{T}_l}
\newcommand{\rtree}{\mathcal{T}_r}
\newcommand{\typ}{\mathrm{typ}}

\newcommand{\pbeta}{\hat{\beta}}

\newcommand{\beq}[1]{\begin{equation}\label{#1}}
\newcommand{\enq}[0]{\end{equation}}

\usepackage[numbers, sort&compress]{natbib}

\title[Online Permutation Embedding]{Online Permutation Embedding: \\ Optimal Stopping and Scaling Laws}
\author{Dylan J. Altschuler, Quentin Dubroff, and Konstantin Tikhomirov}
\date{\today}

\begin{document}

\begin{abstract}
We study optimal online algorithms for embedding a permutation $\pi$ of $[k]$ into an iid stream of uniform $[0,1]$ random variables. This problem is a broad generalization of the classical online monotone subsequence selection problem, recovered in the special case $\pi=\mathrm{Id}_k$. Our first contribution is an efficiently solvable dynamic program for the optimal embedding time of any $k$-permutation $\pi$. This dynamic program also yields an explicit optimal online embedding algorithm. We then investigate the asymptotic scaling of the optimal embedding time for uniformly random target permutations, as well as the extremal problem of identifying the permutations with largest expected online embedding time. Our second main result shows that, to first order, random permutations are strictly faster to embed than monotone permutations, which in turn are strictly faster to embed than the extremal permutations. This separation stands in sharp contrast to prevailing conjectures and heuristics in the offline theory of permutation embeddings.

\end{abstract}

\maketitle

\setcounter{tocdepth}{1}
\tableofcontents

\newpage

\section{Introduction}
Let \(\Pi_k\) denote the set of permutations of \([k] := \{1,2,\dots,k\}\),
let \(\pi\in\Pi_k\), and let
$\stream=(X_1,X_2,\ldots)$ be a stream of independent uniform $[0,1]$ random variables. We say that
$(X_{\tau_1},\ldots,X_{\tau_k})$ is an order-isomorphic embedding of $\pi$ into
the stream $\stream$ if, for all $i,j\in[k]$,
\[
    \pi(i)<\pi(j)
    \qquad\Longleftrightarrow\qquad
    X_{\tau_i}<X_{\tau_j}.
\]
An online algorithm for embedding $\pi$ is a strictly increasing sequence of stopping times adapted to the natural filtration of $\stream$ such that
$(X_{\tau_1},\ldots,X_{\tau_k})$ is almost surely order-isomorphic to $\pi$.
That is, the algorithm must decide immediately and irrevocably whether to include each of the sequentially revealed samples into the embedding.

The purpose of this work is to
initiate a systematic study of the optimal online embedding time for
permutations. Letting $\mathrm{OA}(\pi)$ denote the set of online embedding
algorithms for $\pi$, we define
\begin{equation}\label{eq:intro-beta}
    \beta(\pi) := \inf_{(\tau_1,\dots,\tau_k) \in \mathrm{OA}(\pi)} \E{\tau_k}\,.
\end{equation}
The study of $\beta$ offers a broad extension of the online longest increasing subsequence (LIS) problem, a classical model at the intersection of combinatorics, decision theory, and optimal stopping \cite{steele} that corresponds to the special case $\pi=\mathrm{Id}_k$. There, an embedding is precisely an increasing subsequence of length $k$.
The size-focused online LIS problem was initiated by Samuels and Steele \cite{steele}. Its time-focused dual was studied by Arlotto, Mossel, and Steele \cite{Mossel2015}, who showed
\[
    \beta(\mathrm{Id}_k)=\frac{k^2}{2}+o(k^2).
\]
That is, an increasing subsequence of length $k$ can be selected from a random stream of length $k^2/2$ in an online fashion. In contrast, classical results of Logan and Shepp \cite{logan-shepp} and Vershik and Kerov \cite{vershik-kerov} on Ulam's problem \cite{ulam1961} imply that such a stream actually contains an increasing subsequence of length \((\sqrt{2}+o(1))k\). Thus, for embedding monotone permutations, the loss from making irrevocable online decisions is only a constant factor. This online/offline comparison places online LIS in the same broad paradigm as prophet inequalities, where one compares sequential decision rules with a hindsight benchmark (e.g., famously, the Secretary Problem). The general permutation embedding problem introduced in the present article is motivated in large part by the question of how the online/offline gap varies with the structure of $\pi$.

\subsection{Bellman Equation} Over the past four decades, a substantial literature has emerged around online LIS, including Poissonized formulations, connections to sequential knapsack and bin-packing, and random-permutation variants
\cite{steele,bruss-robertson,gnedin1999, Gnedin2000Permutation, bruss-delbaen2004,arlotto-nguyen-steele,GnedinSeksenbayev2021,permutation,Bruss21}. Additionally, several works investigate embedding unimodal, finite-block
monotone, and alternating subsequences \cite{arlotto-steele-unimodal,ArlottoChenSheppSteele2011,ArlottoSteele2014}.
However, much of this theory exploits a highly specific self-similar
structure common to these models. For example, when embedding $\mathrm{Id}_k$, after the first element is embedded at value \(x\), the remaining task is a fresh instance of the problem of embedding $\mathrm{Id}_{k-1}$, rescaled to \([x,1]\). This scaled regeneration enables tractable recursive analysis \cite{Mossel2015,steele}, and is present in the other mentioned previously analyzed models.

The recursive structure of general permutations is substantially more complicated. After embedding the first element of a target permutation, the remaining elements separate into two distinct subpatterns, one which must be embedded below the first element and another which must be embedded above. However, these subpatterns are interlaced in general, creating substantial technical obstacles to direct analysis. 
Nonetheless, our first main contribution is a Bellman-type recursive formula for $\beta(\pi)$, holding for arbitrary $\pi$. Let $L(\pi) \in \Pi_{\pi(1)-1}$ and $R(\pi) \in \Pi_{k-\pi(1)}$ denote the (unique) permutations which are respectively order-isomorphic to 
\[
    (\pi(j): j \in [k], \pi(j)<\pi(1)) \quad \text{ and } \quad (\pi(j):j \in [k],\pi(j)>\pi(1)).
\]
We will refer to $L(\pi)$ and $R(\pi)$ as the left and right (or lower and upper) subpermutations of $\pi$. 

\begin{theorem}[Expected embedding time of fixed patterns]
Set $\beta(\emptyset) = 0$. Then,
            \begin{equation}\label{eq:rec fixed}
        \beta(\pi)
        =
        \min_{0 \le a < b \le 1}
        \cb{
            \frac{1}{b-a}
            +
            \frac{1}{b-a}
            \int_{a}^{b}
                \frac{\beta(L(\pi))}{x}
                +
                \frac{\beta(R(\pi))}{1-x}
            \, dx
        }.
    \end{equation}
    Here, $\beta(\emptyset)/0 := 0$ by convention. That is, terms from empty subpermutations are omitted from the integrand. 
\end{theorem}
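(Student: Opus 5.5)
Our plan is to first prove a scaling lemma and a decomposition lemma, and then verify the two inequalities in \eqref{eq:rec fixed} separately.

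\textbf{Scaling lemma.} Suppose we must embed a pattern $\sigma$ using only stream values lying in an interval $I$ of length $\ell$. The samples landing in $I$ arrive at rate $\ell$. After the affine rescaling $I\to[0,1]$, they form an iid uniform stream. By Wald's identity, the optimal expected time for this restricted task is $\beta(\sigma)/\ell$. The same conclusion holds for randomized algorithms and for algorithms that may see the discarded samples outside $I$, because those samples are independent of the sub-stream in $I$ and give no information about it.

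\textbf{Decomposition lemma.} Condition on $\pi$'s first element being embedded at value $x$, at some stopping time. After this, the remaining task is to embed $L(\pi)$ into values in $[0,x)$ and $R(\pi)$ into values in $(x,1]$. These two sub-streams are independent Poisson-like streams of rates $x$ and $1-x$. The interlacing constraint of $\pi$ only restricts the relative order of stopping times across the two sub-streams. We then aim to show the following claim.

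\emph{Claim.} The optimal completion time is exactly $\beta(L)/x+\beta(R)/(1-x)$.

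\emph{Lower bound.} Any valid algorithm in particular embeds $L$ into the lower sub-stream and $R$ into the upper one. It must finish both, and the phases in which it is working on $L$ and on $R$ interleave. Write the total time as the samples consumed below $x$ plus the samples consumed above $x$. A sample below $x$ is only useful to $L$. The time until $L$ is done is at least the number of lower samples needed; by the scaling lemma this is at least $\beta(L)/x$ in expectation, and similarly for $R$. But the total time is a maximum of the two, not a sum, so this does not yet give the sum. The real reason the sum appears is the interlacing: while the algorithm waits for the next element of $L$ in the $\pi$-order, it cannot use upper samples. We therefore expect the sum bound to come instead from a continuous-time accounting. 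Each stream position is charged to whichever subpattern the \emph{next} required element belongs to. Conditional on that subpattern's state, the waiting time is that sub-stream's waiting time scaled by $1/x$ or $1/(1-x)$. Summing these charges gives $\beta(L)/x+\beta(R)/(1-x)$ provided each subpattern's own embedding is an admissible online algorithm for it.

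\emph{Upper bound.} Run optimal algorithms for $L$ and $R$ in the respective sub-streams, each advancing only when $\pi$ calls for its next element. The same accounting gives equality.

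\textbf{Optimizing over the first element.} It remains to optimize the first stopping rule. We claim optimal rules are threshold rules: accept the first $X$ in some set $A$. The expected time is then
\[
\frac{1}{|A|}+\frac{1}{|A|}\int_A g,\qquad g(x)=\frac{\beta(L)}{x}+\frac{\beta(R)}{1-x}.
\]
This holds because the accepted value is uniform on $A$, and the wait is geometric with mean $1/|A|$ and independent of the value.

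\emph{Reduction to a deterministic set.} By the usual optimal-stopping argument (stationarity of the problem), an optimal rule can be taken memoryless, i.e.\ a fixed set $A$. Concretely, the value $V$ of the optimal rule satisfies $V=1+\mathbb{E}\min(V,g(X))$. This gives $A=\{g<V\}$.

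\emph{Reduction to an interval.} Since $g$ is convex, $\{g<V\}$ is an interval $(a,b)$. Minimizing over $a<b$ yields \eqref{eq:rec fixed}. The convention for empty subpatterns is automatic.

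\textbf{Main obstacle.} The hardest step will be the interlacing lower bound in the decomposition lemma. We must rule out the possibility that information from one side, or strategic skipping, helps the other side. We plan to handle this using independence of the two sub-streams and a per-step charging argument.
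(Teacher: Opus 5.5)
Your proposal is correct, and it overlaps with the paper in two places. Your upper bound (run sub-algorithms on the thinned lower and upper streams, then apply Wald) is Lemma~\ref{lemma:fix-rec-UB}. Your fixed-point equation $V=1+\mathbb{E}\min(V,g(X))$ is exactly $F(V)=1$ in Proposition~\ref{prop:F-root}, whose sublevel set $\{g<V\}$ is the optimal interval.

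The lower bound, however, takes a genuinely different route. The paper proves a verification theorem and never splits an arbitrary algorithm into sub-algorithms. It defines $b$ recursively and forms the global potential $V(\mathbf{x})=\sum_{S\in\mathcal{C}(\mathbf{x})}b(S)/|I_S(\mathbf{x})|$ over all pending fringe subtrees. It then checks $\int_0^1(V(\mathbf{x})-V(\mathbf{x}\oplus z))_+\,dz=1$ and concludes that $t+\min\{V(\mathbf{X}_t),b(\tree_\pi)\}$ is a submartingale in the full filtration.

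You instead prove the principle of optimality at the root. You charge each step after $\tau_1$ to the side of the next required element. Wald then gives expected post-root time $\mathbb{E}[N_L\mid\mathcal{F}_{\tau_1}]/X_{\tau_1}+\mathbb{E}[N_R\mid\mathcal{F}_{\tau_1}]/(1-X_{\tau_1})$, where $N_L$ counts lower samples arriving during $L$-phases. Finally you solve an iid stopping problem for $\tau_1$.

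This is more modular: no guessed potential, no auxiliary $b$, no induction. But it must pay for things the submartingale handles automatically.

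\emph{The bound $\mathbb{E}[N_L\mid\mathcal{F}_{\tau_1}]\ge\beta(L)$.} This does not follow from independence of the lower and upper sub-streams alone; that only yields the max-type bound you first wrote down. The problem is that which lower samples get skipped (those arriving during $R$-phases) depends on the algorithm. What you need is that the $L$-phase lower samples form an iid uniform stream independent of all side information the algorithm sees. This holds after passing to a stack-of-samples or transfer-of-randomness representation, so the side information becomes a seed that can be fixed.

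\emph{The stopping step.} ``An optimal rule can be taken memoryless'' must become a bound valid for every randomized stopping time. The cleanest way is $\mathbb{E}g(X_\tau)\ge V-\mathbb{E}\sum_{t\le\tau}(V-g(X_t))_+=V-\mathbb{E}\tau$ by Wald. This is precisely the paper's submartingale argument in one-step form.

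\emph{Existence of optimizers.} In your upper bound, use $\varepsilon$-optimal sub-algorithms, since optimizers are not known to exist a priori.

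Finally, the paper's route gives the finite-horizon bound $\mathbb{P}(T_\alg\le n)\le n/\beta(\pi)$ as a by-product. Your expectation-level decomposition does not.
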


Thus $\beta(\pi)$, although defined through an infinite-dimensional optimization
over stopping times, admits a finite-dimensional dynamic program. This
dynamic program gives an efficient method for computing $\beta(\pi)$ and yields an explicit optimal embedding strategy.
In this strategy, the first element of $\pi$ is embedded by the hitting time
\[
    \tau_1:=\min\{t:X_t\in[a_*,b_*]\},
\]
where $(a_*,b_*)$ is an optimizer in \eqref{eq:rec fixed}. Then, conditional on
$X_{\tau_1}=x$, the algorithm recursively embeds $L(\pi)$ in the lower thinned
stream $[0,x]$ and $R(\pi)$ in the upper thinned stream $[x,1]$. This procedure is formalized as Algorithm~\ref{alg:canonical-general2} below, and it admits simple and efficient practical implementation.

\begin{figure}[!t]
\begin{center}
    \includegraphics[width=0.8\linewidth]{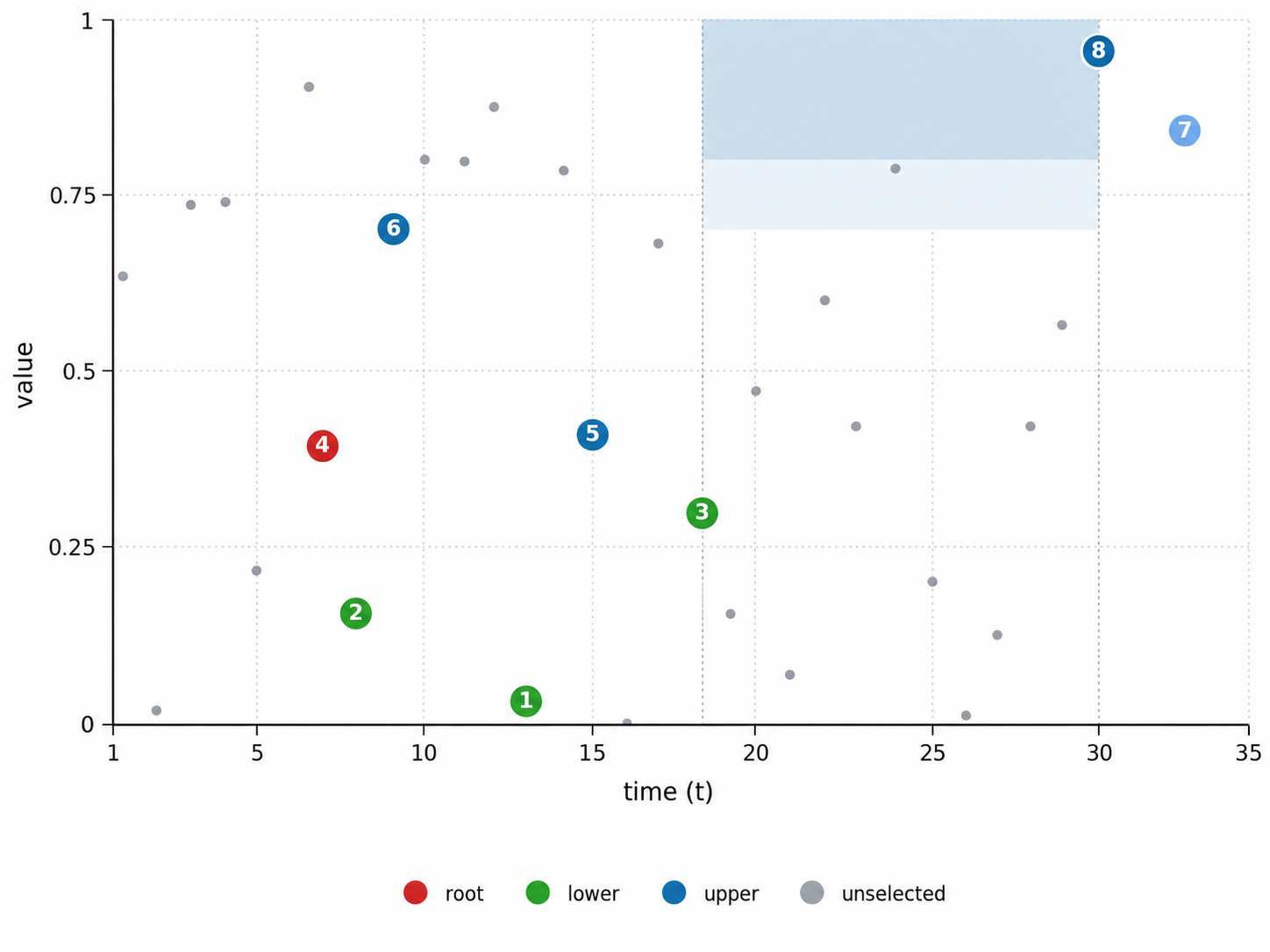}
\end{center}
\captionof{figure}{
    Execution of the optimal online algorithm for embedding
    $\pi=(4,2,6,1,5,3,8,7)$ into a random stream. The light blue interval is the admissible
    region for embedding 8. Anticipating the space needed to embed 7, the optimal rule for embedding 8 accepts the first stream arrival in the dark blue interval. 
}
\label{fig:perm-embed}\end{figure}

\begin{remark}\label{rem:tree-shape-invariance}
An immediate corollary of \eqref{eq:rec fixed} is that $\beta(\pi)$ only depends on the isomorphism class of $\tree_\pi$, i.e., the \textit{unlabeled} tree decomposition of $\pi$. In particular, for every \(\pi\in\Pi_k\), there exists a permutation $\pi_{231}$ which is 231-avoiding (i.e., does not contain any subsequence that is order-isomorphic to $231$) with
\[\beta(\pi)=\beta(\pi_{231}).\] 
Indeed, if $\pi'$ is the permutation obtained from $\pi$ by concatenating $\pi(1)$, $L(\pi)$ and $R(\pi)$ (where $R(\pi)$ is shifted up by $\pi(1)$), then \eqref{eq:rec fixed} implies that $\beta(\pi') = \beta(\pi)$. Iterating this procedure on $L(\pi)$ and $R(\pi)$ yields a 231-avoiding permutation. 
A notable consequence of this observation is the apparent dramatic reduction in complexity for the set of inputs to $\beta$. While there are $k!$ permutations, the number of 231-avoiding permutations is given by the $k$-th Catalan number, which grows only exponentially in $k$.
\end{remark}

We next show that $\beta$ is the \emph{probabilistic} threshold for embedding, in addition to being the expected threshold. In particular, we show that the run-time of the optimal algorithm concentrates about its expectation $\beta(\pi)$, and conversely that there is no embedding algorithm providing a high-probability guarantee of embedding faster than $\beta(\pi)$. For a given algorithm $\alg \in OA(\pi)$, let $T_\alg$ be its stopping time, the time at which the final value of $\pi$ is embedded.

\begin{figure}
    \centering
    \begin{minipage}[t]{0.485\linewidth}
        \vspace{0pt}
        \centering
        \includegraphics[width=\linewidth]{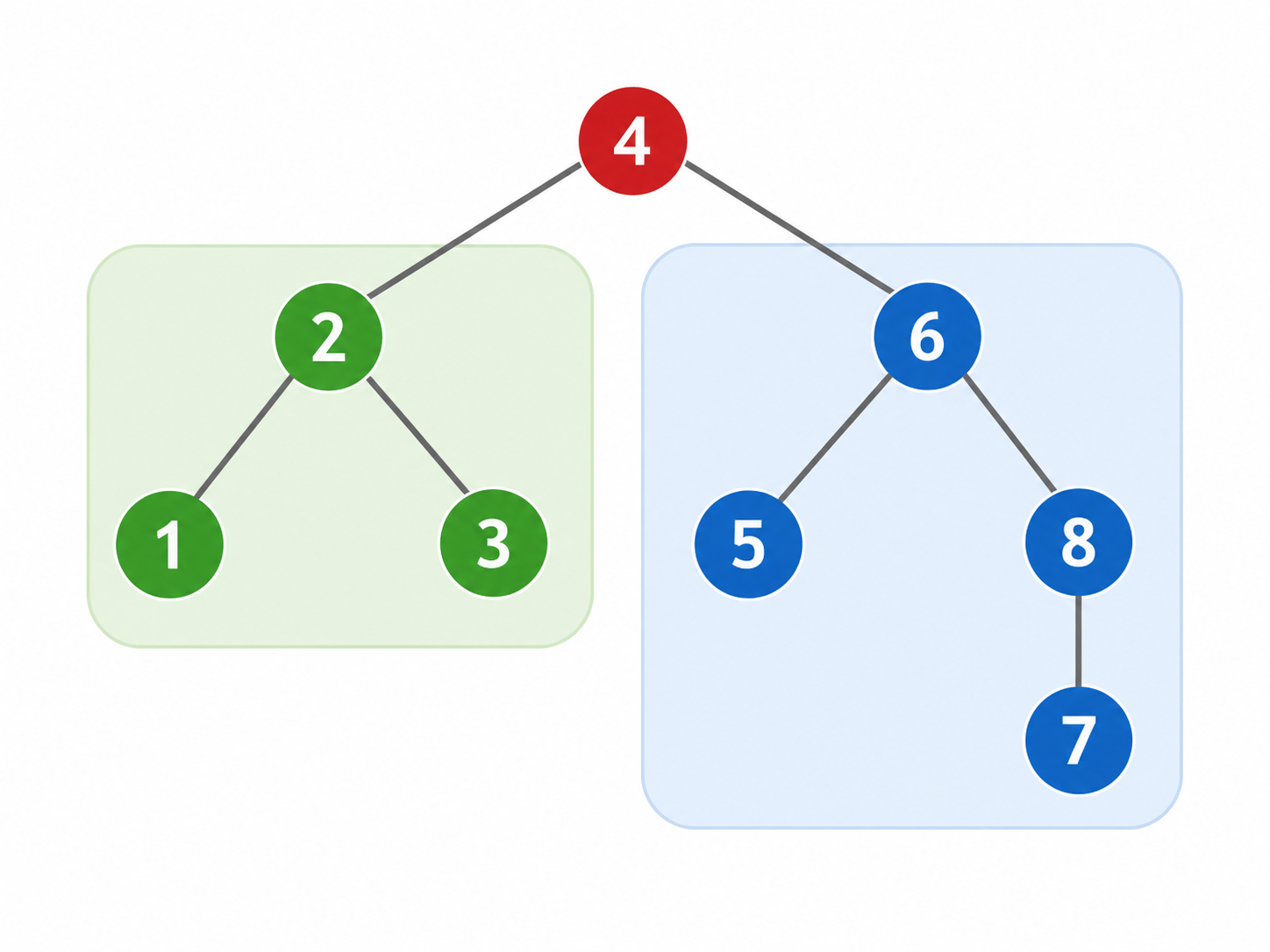}
    \end{minipage}
    \hfill
    \begin{minipage}[t]{0.485\linewidth}
        \vspace{0pt}
        \centering
        \includegraphics[width=.9\linewidth]{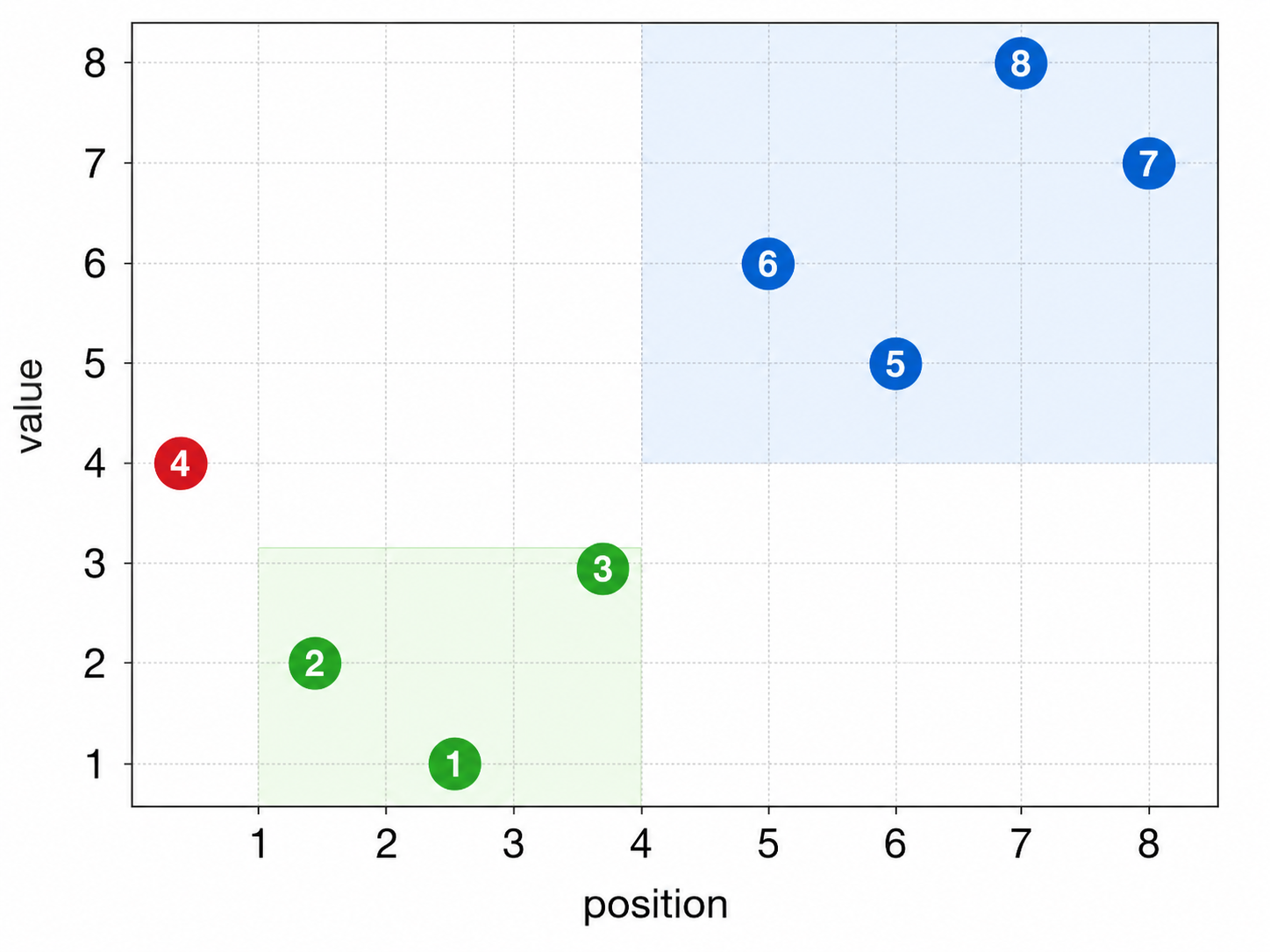}
    \end{minipage}
    \caption{
        Left: labeled tree decomposition of
        $\pi=(4,2,6,1,5,3,8,7)$. Cf.\ the identity permutation, which
        decomposes into a path. The left and right subtrees form fresh
        embedding problems, yielding recursive structure. Right: a graph of
        $\pi_{231}$, which has convenient block structure and a tree
        decomposition isomorphic to that of $\pi$.
    }
    \label{fig:perm-structure}
\end{figure}

\begin{theorem}\label{thm:main-fixed}
    Let $\pi \in \Pi_k$ and let $\alg^*\in\mathrm{OA}(\pi)$ denote \cref{alg:canonical-general2}. Then
    \begin{equation}\label{eq:beta-achieved}
        \E{T_{\alg^*}}=\beta(\pi),
    \end{equation}
    and, furthermore, for some universal constant $C > 0$,
    \begin{equation}\label{eq:fixed-prob-UB}
        \PP[\stream]{|T_{\alg^*} - \beta(\pi)| > y \,\beta(\pi)^{3/4}} \le C\,y^{-2}\,, \qquad y \ge 1 \,. 
    \end{equation}
    Conversely, for any $\alg \in \mathrm{OA}(\pi)$ and any $\delta \in (0,1)$, 
    \begin{equation}\label{eq:fixed-prob-LB}
        \PP[\stream]{T_\alg \le (1-\delta)\beta(\pi) } \le 1-\delta\,.
    \end{equation}
\end{theorem}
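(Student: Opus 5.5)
Three parts: the expectation identity \eqref{eq:beta-achieved}, the concentration bound \eqref{eq:fixed-prob-UB}, and the lower bound \eqref{eq:fixed-prob-LB}.

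\textbf{Expectation.} For \eqref{eq:beta-achieved} we induct on $k$, using the Bellman formula \eqref{eq:rec fixed}. Under \cref{alg:canonical-general2}, the time $\tau_1$ to hit $[a_*,b_*]$ is geometric with mean $1/(b_*-a_*)$. Conditional on the hit, $X_{\tau_1}=x$ is uniform on $[a_*,b_*]$. After $\tau_1$, the points of the stream landing in $[0,x]$ form, after rescaling, an iid uniform stream. Its interarrival times are geometric with mean $1/x$, and by Wald's identity the time spent embedding $L(\pi)$ has expectation $\beta(L(\pi))/x$. The same holds for $R(\pi)$ in $[x,1]$, with mean $\beta(R(\pi))/(1-x)$.

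The two sub-embeddings run in parallel on disjoint parts of the stream, and $T$ is the later of their finishing times. So the quantity the recursion actually computes is $\tau_1+\max(T_L,T_R)$, not a sum. I therefore expect the algorithm's ``recursively embeds'' step to interleave the two subproblems so that the costs add. For instance, $L(\pi)$ could be completed first on the lower stream while upper arrivals are discarded, and only then $R(\pi)$ begun on the upper stream. This is what makes the integrand in \eqref{eq:rec fixed} additive. With that reading, taking expectations and integrating $x$ over $[a_*,b_*]$ gives exactly the right-hand side of \eqref{eq:rec fixed} at the optimizer, which equals $\beta(\pi)$.

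\textbf{Concentration.} For \eqref{eq:fixed-prob-UB} we bound $\mathrm{Var}(T_{\alg^*})\lesssim \beta(\pi)^{3/2}$ and apply Chebyshev. Decompose $T$ as a sum of geometric waiting times with random, nested success probabilities $p_1,\dots,p_k$, namely the lengths of the acceptance windows. Then
\[
\mathrm{Var}(T)=\E{\sum_i (1-p_i)/p_i^2}+\mathrm{Var}\Bigl(\sum_i 1/p_i\Bigr).
\]
The first term is controlled by showing the windows are not too small: one would like $1/p_i\lesssim \beta^{1/2}$ uniformly, in analogy with $\mathrm{Id}_k$, where $\beta\sim k^2$ and the windows have length about $1/k$. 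The second term is controlled by a martingale (Doob) decomposition along the tree $\tree_\pi$ of the conditional expectations $\E{T\mid \mathcal F_{\tau_i}}$. Each increment is $(\text{depth factor})\times$ the conditional variance from a uniform choice of $x$ in its window.

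\textbf{Main obstacle.} The hard step is the a priori bound on window lengths, or more precisely on the increments of this martingale, uniformly over all tree shapes. My first attempt would be a perturbation argument on \eqref{eq:rec fixed}. If the optimal window were too narrow, widening it would reduce the $1/(b-a)$ cost by more than it increases the integral. This should yield $b_*-a_*\gtrsim \beta(\pi)^{-1/2}$, after which the martingale increments sum to $O(\beta^{3/2})$.

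\textbf{Lower bound.} For \eqref{eq:fixed-prob-LB}, suppose some $\alg$ had $\PP{T_\alg\le(1-\delta)\beta}>1-\delta$. Build a restarting algorithm from it: run $\alg$, and if it has not finished by time $(1-\delta)\beta$, restart on the fresh stream, which is possible since the process has the regeneration property. The expected time is then at most
\[
(1-\delta)\beta\sum_{j\ge0}\delta^j=\frac{(1-\delta)\beta}{1-\delta}=\beta.
\]
This only reaches equality, so we need strictness. We get it by truncating at the first success rather than at the deadline: on success the time is $T_\alg\le(1-\delta)\beta$, which is strictly less than the deadline with positive probability, unless the inequality is tight. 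Alternatively, use $\E{\min(T,s)}\le s$ with a strict margin. Either way this gives expected time $<\beta(\pi)$, contradicting \eqref{eq:intro-beta}.
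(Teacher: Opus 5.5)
\textbf{Lower bound \eqref{eq:fixed-prob-LB}.} This part fails because your restarting procedure is not an element of $\mathrm{OA}(\pi)$. Selections are irrevocable: each $\tau_j$ must be a stopping time, and the selected tuple must embed $\pi$. So when the deadline $s:=(1-\delta)\beta(\pi)$ passes, you cannot throw away $X_{\tau_1},\dots,X_{\tau_j}$ and start over. Whether the first selection ``counts'' would depend on whether the run finishes by time $s$, and that is unknown at time $\tau_1$.

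This is not a technicality, because restarting genuinely beats $\beta$. For $\mathrm{Id}_2$, keep the current sample as a tentative first element and restart from $X_{t+1}$ whenever $X_{t+1}<X_t$. This stops at the first ascent, with mean $\sum_{n\ge0}1/n!=e$, whereas $\beta(\mathrm{Id}_2)=G(0,1)\approx 3.146$. So a restarting scheme with mean below $\beta(\pi)$ contradicts nothing. (Your strictness worry is moot anyway: the strict hypothesis already gives mean at most $s/\mathbb{P}(T_\alg\le s)<\beta(\pi)$.)

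You need an argument that applies to every irrevocable algorithm. The paper uses three ingredients:
\begin{enumerate}
\item the potential $V(\mathbf x)=\sum_{S\in\mathcal C(\mathbf x)}b(S)/|I_S(\mathbf x)|$ of the not-yet-embedded fringe subtrees;
\item the calibration $\int_0^1\bigl(V(\mathbf x)-V(\mathbf x\oplus z)\bigr)_+\,dz=1$, which is $F(v_*)=1$ from Proposition~\ref{prop:F-root} after rescaling;
\item the submartingale $t+\min\{V(\mathbf X_t),b(\tree_\pi)\}$, which yields $\mathbb{P}(T_\alg\le n)\le n/\beta(\pi)$.
\end{enumerate}
The cap at $b(\tree_\pi)$ is the legitimate form of your restart idea. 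It lives in the potential rather than in the algorithm, and it is harmless because $\min\{v,b\}-\min\{w,b\}\le (v-w)_+$.

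\textbf{Concentration \eqref{eq:fixed-prob-UB}.} The uniform bound $1/p_i\le C\beta(\pi)^{1/2}$ you want is false. A perturbation or first-order argument only controls the \emph{relative} window at a node, $(b_*-a_*)^{-2}\le\beta(S)$. The actual window is $d_S(b_*-a_*)$, where $d_S$ is the random length of the interval allotted to $S$.

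For $\mathrm{Id}_k$, each relative window is $[0,b_*]$ with $1-b_*=\beta(\mathrm{Id}_{j-1})/\beta(\mathrm{Id}_j)$. If every selection lands near the top of its window, the last window has length about $1/\beta(\mathrm{Id}_k)$, so $1/p_k\asymp\beta(\mathrm{Id}_k)$. Hence neither $\mathbb{E}\sum_i p_i^{-2}$ nor your martingale increments, which carry the random factor $d_S^{-2}$, can be handled through deterministic window bounds.

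The missing ingredient is an exact identity from the stationarity conditions $f(a)=f(b)=B$, where $B=\beta(S)$, $B_L=\beta(S_L)$, $B_R=\beta(S_R)$ and $f(x)=B_L/x+B_R/(1-x)$. For $U\sim\mathrm{Unif}(a,b)$ it gives $\mathbb{E}[U^{-2}]=1/(ab)=B/B_L$, and symmetrically $\mathbb{E}[(1-U)^{-2}]=B/B_R$. The paper feeds this into the law of total variance (Lemma~\ref{lemma:fixed-rec-var}) and obtains $v(\tree)\le C_0B+(B/B_L)v(L)+(B/B_R)v(R)$. Here, with $r=B-f$ and $\Delta=b-a$, the middle step uses $\mathrm{Var}(f(U))\le\mathbb{E}[r(U)^2]\le 2\Delta^{-2}\le 2B$. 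The recursion is then closed by induction on $v\le 2C_0B^{3/2}-C_0B$, using $\sqrt B\ge\sqrt{B_L}+\sqrt{B_R}$, or $\sqrt B\ge\sqrt{B_L}+\tfrac12$ when there is one child.

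The same identity gives $\mathbb{E}[\beta(S)/d_S^2]=\beta(\pi)$ at every node. That would also rescue your Doob decomposition: both variance terms become $O(k\beta(\pi))=O(\beta(\pi)^{3/2})$, since $\beta(\pi)\ge k^2/4$.

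\textbf{Expectation \eqref{eq:beta-achieved}.} The subproblems neither run in parallel, nor can $L(\pi)$ be completed first, since that embeds a different permutation. The $\tau_j$ follow the positions of $\pi$, so exactly one subproblem is active at any time and arrivals in the other half are discarded. That is why the costs add. With this reading, and the Bellman formula taken as known, your induction for \eqref{eq:beta-achieved} is fine.
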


As an aside, Algorithm~\ref{alg:canonical-general2} does not use internal randomization, hence \eqref{eq:beta-achieved} implies that allowing internal randomization for algorithms does not speed up the optimal embedding.

\subsection{Scaling Laws} Our second main contribution concerns the asymptotics of $\beta(\pi)$ for
canonical families of permutations. The motivation comes from the classical problem in combinatorics of constructing short universal permutations. A permutation $\Sigma\in\Pi_n$ is $k$-universal if every $\pi\in\Pi_k$ appears as an order-isomorphic subsequence of $\Sigma$. The problem of determining the minimum length $n=n(k)$ of a $k$-universal permutation---posed in various forms since the 1970s by Knuth, Chung--Diaconis--Graham, Arratia, and others; see the survey \cite{engen-vatter}---remains open, despite considerable effort. The best known asymptotic bounds \cite{offline-lb,engen-vatter,miller2009} are
\[
    \frac{1.00007}{e^2}\,k^2 \le  n \le \frac{1 + o(1)}{2}\, k^2\,,
\]
with the lower bound due to the recent advance in \cite{offline-lb}.

A parallel question, attributed to Alon (1999) \cite{arratia}, asks for
which $n=n(k)$ is a uniformly random $n$-permutation $k$-universal with high probability. The identity permutation already forces $n\ge(1/4+o(1))k^2$ by the previously mentioned result of Logan–Shepp and Vershik–Kerov on offline LIS, and Alon conjectured
that $n=(1/4+o(1))k^2$ is indeed the threshold; in other words, monotone patterns should be the first-order obstruction to universality. Surprisingly, even less is known about the random-host setting than the deterministic setting. The best lower bound
is the aforementioned $k^2/4$ from offline LIS, while the best upper bound is order
$k^2\log\log k$, established in a recent breakthrough of He and Kwan \cite{he-kwan}. Moreover, the simpler problem, reiterated in \cite[Conjecture 5.3]{he-kwan}, of determining whether each fixed $\pi \in \Pi_k$ appears with high probability in a random permutation of length $(1/4+o(1))k^2$ remains open.

Our study of $\beta$ provides an online setup in which offline analogues
can be tested directly. Define
\[
    \beta_k^- := \min_{\pi\in \Pi_k}\beta(\pi),
    \qquad
    \beta^\typ_k := \E[\pi\sim \mathrm{Unif}(\Pi_k)]{\beta(\pi)}, \qquad  \beta_k^+ := \max_{\pi\in \Pi_k}\beta(\pi).
\]
Thus $\beta_k^+$ captures the online analogue of the most difficult target pattern to find in
a random host, while $\beta_k^\typ$ measures the embedding time of a typical
target pattern. The fastest target pattern to embed, captured by $\beta_k^-$, offers a new and natural extremal quantity as well.

\begin{question}\label{q:leading}
        Is the increasing sequence $\pi=\mathrm{Id}_k$ the hardest target pattern to
    embed, up to leading order? Equivalently, is
    \[
        \beta_k^+ = \frac{k^2}{2}(1+o_k(1))?
    \]
\end{question}
This is the online analogue of a pattern-wise weakening of Alon's conjecture: it
concerns the embedding time of individual patterns, rather than simultaneous
universal containment. 
Our second main result resolves this question and reveals a
separation from the conjectural offline picture.
\begin{theorem}[Scaling limits]\label{thm:numerical-scaling}
The following limits exist:
\[
    c_- := \lim_{k\to\infty}\frac{\beta_k^-}{k^2}, \qquad c_\typ:=\lim_{k\to\infty}\frac{\beta_k^\typ}{k^2},
    \qquad
    c_+:=\lim_{k\to\infty}\frac{\beta_k^+}{k^2}.
\]
Moreover,
\[
    c_- < c_\typ < \frac12 < c_+.
\]
Quantitatively, 
\[
    \frac{1}{4} \le c_- \le 0.48867, \qquad 0.48934 \le c_\typ \le 0.49967,
    \qquad
    0.50547 \le c_+ \le 0.50568.
\]
\end{theorem}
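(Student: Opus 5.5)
The plan is to pass to the square-root scale. I would set $\gamma(\pi):=\sqrt{\beta(\pi)}$ and read the Bellman equation \eqref{eq:rec fixed} as an approximate additivity law along the tree decomposition $\tree_\pi$.

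\textbf{Exact lower bound.} For any $0\le a<b\le 1$, the average of the integrand over $[a,b]$ is at least $\min_x\{A/x+B/(1-x)\}=(\sqrt A+\sqrt B)^2$, where $A=\beta(L(\pi))$ and $B=\beta(R(\pi))$. Hence
\[
\beta(\pi)\ge \min_{a<b}\Big\{\tfrac{1}{b-a}\Big\}+\big(\gamma(L(\pi))+\gamma(R(\pi))\big)^2 ,
\]
so $\gamma(\pi)\ge\gamma(L(\pi))+\gamma(R(\pi))$. A slightly more careful version of the same estimate should also keep a positive per-node contribution from the $1/(b-a)$ term. This would give $\gamma(\pi)\ge \gamma(L)+\gamma(R)+\theta(\gamma(L),\gamma(R))$ with an explicit, nonnegative, size-dependent increment $\theta$. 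For a one-sided node (for example in the path $\mathrm{Id}_k$) the increment is of order $1$. For balanced nodes it decays.

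\textbf{Matching upper bound.} I would choose the window centred at the optimizer $x^*=\gamma(L)/(\gamma(L)+\gamma(R))$ with width $w$. A second-order Taylor expansion of the integrand then yields
\[
\beta(\pi)\le(\gamma_L+\gamma_R)^2+\tfrac1w+O\big(w^2\,\beta''(x^*)\big).
\]
Optimizing over $w$, and treating separately the boundary case where one side is empty or much smaller, gives $\gamma(\pi)=\gamma(L)+\gamma(R)+\theta+O(\varepsilon(\text{sizes}))$, with errors that are summable over the tree at scale $o(k)$.

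\textbf{Existence of the limits.} With near-additivity in hand, I would treat each constant separately.
\begin{itemize}
\item For $c_+$ and $c_-$, I would compose extremal trees: attach two extremal trees of sizes $m$ and $n$ below a new root. This gives approximate super- and subadditivity, $\gamma^\pm_{m+n+1}\gtrless\gamma^\pm_m+\gamma^\pm_n\pm o(\cdot)$. Then a Fekete-type lemma with error terms shows that $\gamma^\pm_k/k$ converges, so $\beta_k^\pm/k^2$ converges.
\item For $c_\typ$, the tree $\tree_\pi$ of a uniform $\pi$ is a random binary search tree. In that case $\gamma(\pi)$ is, up to $o(k)$, an additive functional of the random BST whose toll function is $\theta$. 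Standard laws of large numbers and concentration for such functionals give $\gamma(\pi)/k\to\sqrt{c_\typ}$ in $L^2$, and hence convergence of the expectation.
\end{itemize}

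\textbf{Quantitative bounds.} The trivial bound $c_-\ge 1/4$ should follow from superadditivity iterated over all $k$ nodes. Each node contributes at least $1/2$ to $\gamma$, which I expect comes from the $1/(b-a)$ term combined with the boundary structure, so $\gamma\ge k/2$. The remaining numerical constants would come from exact computation of the dynamic program for moderate $k$, using Remark~\ref{rem:tree-shape-invariance} to restrict to tree shapes. I would then transfer these values to $k\to\infty$ using the explicit error bounds from the super/subadditivity step. For example, $c_+>1/2$ would be witnessed by a finite tree whose per-node increment beats that of the path, iterated through composition.

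\textbf{Main obstacle.} I expect the hardest part to be making the error terms in the upper bound uniform and summable. The one-sided boundary regime, where $x^*$ is near $0$ or $1$, is exactly where the per-node increment is of order one, and the rigorous finite-to-infinite transfer depends on controlling it sharply. I would first try to handle it by comparison with the exactly solvable path case $\mathrm{Id}_k$, where $\beta(\mathrm{Id}_k)=k^2/2+o(k^2)$ from \cite{Mossel2015}.
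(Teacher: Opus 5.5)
Your existence arguments are essentially sound, and in part they differ from the paper's. Composing two extremal trees under a new root, together with $(\sqrt p+\sqrt q)^2\le G(p,q)\le(\sqrt p+\sqrt q+1)^2$, makes $r\mapsto\sqrt{\beta^+_{r-1}}$ superadditive and $r\mapsto\sqrt{\beta^-_{r-1}}+1$ subadditive. The law of large numbers for the \emph{exact} toll $\eta(\tree)=\sqrt{\beta(\tree)}-\sqrt{\beta(\tree_L)}-\sqrt{\beta(\tree_R)}\in[0,1]$ on random BSTs then gives existence of $c_{\mathrm{typ}}$. The obstacle you single out is not real: the window $a=\sqrt A/(s+1)$, $b=(\sqrt A+1)/(s+1)$ with $s=\sqrt A+\sqrt B$ gives $\beta\le(s+1)^2$ uniformly, one-sided nodes included, with no Taylor expansion or error summation.

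\textbf{The quantitative bounds.} The gap is here, and this is what carries $c_-<c_{\mathrm{typ}}<\tfrac12<c_+$. Root-composition is one-sided. Because $\beta_k^+$ is a maximum over root positions, it bounds $c_+$ only from below (your witness argument for $c_+>\tfrac12$ is the paper's shifted superadditivity), and it bounds $c_-$ only from above. You therefore have no finite-$k$ upper bound on $c_+$, and no finite-$k$ bound of either sign on $c_{\mathrm{typ}}$. Truncating $\sqrt{c_{\mathrm{typ}}}=\sum_j\frac{2}{(j+1)(j+2)}\mathbb{E}\,\eta(\tree_j)$ needs averages over Catalan-many shapes. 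The tail bound $2/(J+2)$ from $\eta\le1$ forces $J$ into the hundreds even for the weaker goal $c_{\mathrm{typ}}<\tfrac12$.

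Three ideas from the paper are missing.
(i) Value-splitting: embed the $m$ lowest values of $\pi\in\Pi_{m+n}$ in $[0,\alpha]$ and the rest in $[\alpha,1]$, with $\alpha=m/(m+n)$. Thinning gives $\beta(\pi)\le\alpha^{-1}\beta(\pi^{\le m})+(1-\alpha)^{-1}\beta(\pi^{>m})$, so $\xi_k/k$ is subadditive for all three sequences. Hence $c_+\le\beta_k^+/k^2$ and $c_{\mathrm{typ}}\le\beta_k^{\mathrm{typ}}/k^2$.
(ii) The computable majorant $\gamma_k=\frac1k\sum_{i=1}^kG(\gamma_{i-1},\gamma_{k-i})\ge\beta_k^{\mathrm{typ}}$. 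It comes from $\mathbb{E}\min\le\min\mathbb{E}$ (using that $G_{p,q}(a,b)$ is affine in $(p,q)$), the conditional independence and uniformity of $L(\pi),R(\pi)$ given $\pi(1)$, and monotonicity of $G$.
(iii) For the lower bound, the $k$-th partial sum of your series equals $A_k=\frac{2}{(k+1)(k+2)}\sum_{j\le k}m_j$ with $m_j=\mathbb{E}\sqrt{\beta(\tree_j)}$, and $m_j\ge\sqrt{\beta_j^-}$ makes it computable. The resulting $c_{\mathrm{typ}}\ge0.48934$ exceeds $c_-\le0.48867$ by less than $10^{-3}$, so nothing much cruder will separate these two constants.

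\textbf{Computation and the bound $c_-\ge\tfrac14$.} Restricting to tree shapes still leaves Catalan-many cases. The feasible object is the size-indexed program $\beta_k^\pm=\max/\min_{i\in[k]}G(\beta^\pm_{i-1},\beta^\pm_{k-i})$, which is valid by coordinatewise monotonicity of $G$. It must be run to $k\approx5000$, since the bracket $\beta_k^+/(k+1)^2\le c_+\le\beta_k^+/k^2$ has relative width about $2/k$. Nor can $G$ be computed ``exactly'': it is a transcendental minimization. The paper reduces it to the unique root of an elementary residual and certifies enclosures by interval arithmetic, propagated through the recursion by monotonicity.

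Your argument for $c_-\ge\tfrac14$ rests on a false claim. Binary nodes do not contribute at least $\tfrac12$; as you note yourself, the increment tends to $0$ at large balanced nodes. What is true is the shifted bound $\sqrt{\beta(\pi)}\ge(k+1)/2$, proved by induction. At a binary node, $G(p,q)\ge(\sqrt p+\sqrt q)^2$, and the children's two extra halves pay for the root. At a unary node you need $G(0,q)\ge(\sqrt q+\tfrac12)^2$, which must be proved (e.g.\ from $-\log(1-d)\ge d+d^2/2$). Comparison with $\beta(\mathrm{Id}_k)=k^2/2+o(k^2)$ cannot replace any of this, since that $o(k^2)$ is not explicit.
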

In particular, we obtain two striking negative results in the online setting. First, increasing permutations are not first-order extremal. This creates a dichotomy: either the online and offline permutation embedding problems have fundamentally different extremizers, or else Alon's conjecture and its weakening \cite[Conjecture 5.3]{he-kwan} do not hold. Both resolutions are interesting. Second, random permutations are also not first-order extremal. This runs counter to the
heuristic---which holds in the settings of universal metric spaces \cite{metricpoincare,matousek}---that random objects are
the hardest to embed. A similar phenomenon occurs in universal graphs \cite{alon2017}, where cliques and independent sets turn out to be strictly harder to embed than random subgraphs.

\begin{remark}
    We provide efficient algorithms for computing $c_+$ and $c_\typ$ to arbitrary precision in terms of $\beta^+_k$ and $\beta_k^\typ$, respectively, in \cref{lem:subsupadd}. Our methods allow for efficient, rigorous computation of $\beta_k^\pm$ to arbitrary precision. However, while $\beta_k^\typ$ can be computed to arbitrary precision (by enumerating over $\Pi_k$), we do not have an efficient routine, so our quantitative bounds on $c_\typ$ use indirect estimates (coming from the outer inequalities of \eqref{eq:betagamma}).
\end{remark}

\medskip

Our final result complements the above analysis of $c_\typ$ with the observation that the expected embedding time of a typical (random) permutation is concentrated around its mean, denoted by $\beta_k^\typ$, and even follows a central limit theorem.

\begin{theorem}[Concentration and CLT for $\beta$]\label{thm:main-random} 
Letting $Z$ be a standard normal and $\boldsymbol{\pi} \sim \mathrm{Unif}(\Pi_k)$, there exists a universal constant $\sigma^2 > 0$ such that as $k$ diverges,
\[
    \frac{\beta(\boldsymbol{\pi}) - \beta_k^\typ}{k^{3/2}} ~\stackrel{\mathcal{D}}{\longrightarrow}~ \sigma Z\,.
\]
In particular, for every $\varepsilon>0$,
\begin{equation}\label{eq:goal-beta-concentration}
    \frac{\ba{\cb{\pi \in \Pi_k \,:\, \beta(\pi) \in
    [c_{\mathrm{typ}}\,k^2 - \varepsilon k^2,
    c_{\mathrm{typ}}\,k^2 + \varepsilon k^2]}}}{|\Pi_k|} = 1 - o(1)\,.
\end{equation} 
\end{theorem}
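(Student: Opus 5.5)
The plan is to replace $\beta$ by its square root, which turns the Bellman recursion \eqref{eq:rec fixed} into an approximately \emph{additive} recursion over the tree decomposition. Then I would apply a central limit theorem for additive functionals of random binary search trees, and finally return to $\beta$ by the delta method. The starting observation is that by \eqref{eq:rec fixed} and Remark~\ref{rem:tree-shape-invariance}, $\beta(\pi)$ is a deterministic function of the unlabeled tree $\tree_\pi$. For $\boldsymbol{\pi}\sim\mathrm{Unif}(\Pi_k)$ this tree is a random binary search tree: $\boldsymbol{\pi}(1)$ is uniform on $[k]$, and conditionally on $\boldsymbol{\pi}(1)$ the subpermutations $L(\boldsymbol\pi),R(\boldsymbol\pi)$ are independent and uniform on $\Pi_{\boldsymbol\pi(1)-1}$ and $\Pi_{k-\boldsymbol\pi(1)}$. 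Hence $Y_k:=\beta(\boldsymbol\pi)$ satisfies the distributional recursion
\[
Y_k \stackrel{d}{=} g\bigl(Y^{(1)}_{I_k},\,Y^{(2)}_{k-1-I_k}\bigr),\qquad I_k\sim\mathrm{Unif}\{0,\dots,k-1\},
\]
with $Y^{(1)},Y^{(2)}$ independent copies and $g(A,B)$ the minimum on the right of \eqref{eq:rec fixed}.

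The first step is a deterministic asymptotic analysis of $g$. Put $f(x)=A/x+B/(1-x)$. For large $A,B$ the optimal interval concentrates near the minimizer $x^*=\sqrt A/(\sqrt A+\sqrt B)$, where $f(x^*)=(\sqrt A+\sqrt B)^2$. A Taylor expansion of $f$ to second order, together with optimization of the window width $w$ (balancing $1/w$ against $f''(x^*)w^2/24$), should give
\[
\sqrt{g(A,B)}=\sqrt A+\sqrt B+t(A,B).
\]
Here the toll $t(A,B)$ is bounded, and it converges to an explicit function of the ratio $A/B$ and of the scale, up to errors $O((A+B)^{-\gamma})$ for some $\gamma>0$. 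Degenerate cases where one subtree is empty or small must be handled separately, but they contribute bounded tolls as well. Consequently $S(\pi):=\sqrt{\beta(\pi)}$ equals the sum over the nodes $v$ of $\tree_\pi$ of bounded tolls $t_v$. Each $t_v$ depends essentially only on the shape of the subtree at $v$, in particular on the normalized values $\beta/|\text{subtree}|^2$ of its two children.

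The second step is a CLT for $S(\boldsymbol\pi)$ with mean $\sqrt{c_\typ}\,k+o(k)$ and fluctuations of order $\sqrt k$. I would use the contraction method (Neininger--R\"uschendorf) applied to the recursion $S_k\stackrel d= S^{(1)}_{I_k}+S^{(2)}_{k-1-I_k}+t_k$. Alternatively, one could use the Holmgren--Janson theory of additive functionals of random BSTs with bounded, asymptotically local tolls. The toll depends on whole subtrees, but its dependence on the fine structure decays with subtree size, and this decay is what makes the functional ``asymptotically local''. This is the regime in which the variance is $\sigma_S^2 k+o(k)$ and the normalized fluctuations are asymptotically normal. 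To get there I would bound $\mathrm{Var}(t_k)$ and the covariances between tolls at distant nodes, using the concentration of $\beta$ on subtrees, which follows inductively from the same recursion. Then the delta method transfers the result: $\beta=S^2$, so
\[
\beta(\boldsymbol\pi)-\beta_k^\typ \approx 2\sqrt{c_\typ}\,k\,(S-\E S),
\]
which is of order $k^{3/2}$ and gives $\sigma^2=4c_\typ\sigma_S^2$. The statement \eqref{eq:goal-beta-concentration} then follows immediately from Theorem~\ref{thm:numerical-scaling}, since $k^{3/2}=o(k^2)$.

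I expect the main obstacle to be twofold. The first part is making the toll expansion uniform enough, including near unbalanced splits and small subtrees, for the contraction-method hypotheses to hold. The second part is proving $\sigma^2>0$. For positivity I would exhibit a local source of variance. One option is to compare two shapes of a size-$3$ subtree, a path versus a cherry, whose tolls differ by a fixed positive amount. Order $k$ such subtrees appear independently in the random BST, and an Efron--Stein lower bound, or conditioning on the rest of the tree, should then show that the variance of $S$ is at least $ck$.
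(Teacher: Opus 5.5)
Your architecture for the CLT with $\sigma^2\ge0$, when you take the Holmgren--Janson option, matches the paper's. You pass to $S=\sqrt{\beta}$, write it as the additive functional $\sum_v\eta(\mathcal{T}_v)$ of the random BST with a bounded toll, apply Holmgren--Janson, and return to $\beta$ by the delta method. It is heavier than necessary, though:
\begin{itemize}
\item Boundedness of the toll is a global two-line fact: $\eta\ge0$ by Cauchy--Schwarz, and $\eta\le1$ by testing the window $\bigl[\frac{\sqrt A}{s+1},\frac{\sqrt A+1}{s+1}\bigr]$ with $s=\sqrt A+\sqrt B$. No Taylor expansion or separate handling of unbalanced splits is needed.
\item The Holmgren--Janson theorem for BSTs needs only a bounded toll, so no asymptotic locality and no covariance estimates between distant tolls are required.
\item The contraction method is not an independent route. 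In this linear-variance regime the limit equation $X\stackrel{d}{=}\sqrt U X^{(1)}+\sqrt{1-U}X^{(2)}$ is solved by every centered normal law. So the method needs $\mathrm{Var}(S_k)=\sigma_S^2k+o(k)$ as an input rather than producing it.
\end{itemize}

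The genuine gap is in the proof of $\sigma^2>0$. You treat switching a size-$3$ fringe subtree from cherry to path as changing $S$ by the fixed amount $\delta_0=\sqrt{\beta_{\mathrm{path}}}-\sqrt{\beta_{\mathrm{bal}}}>0$, as if the other tolls were unaffected. They are not. The toll $\eta$ at each ancestor depends on the $\beta$-values of its children, so every ancestral toll changes too. The change at the root is $\delta_0$ pushed through $\Psi(x,y)=\sqrt{G(x^2,y^2)}$, in one coordinate at each ancestor, along a path of typical length $\Theta(\log k)$. Monotonicity of $G$ only makes this effect nonnegative. A slope lower bound by any constant $\rho<1$ would only guarantee an effect of size $\rho^{\Theta(\log k)}=k^{-c}$. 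Then neither conditioning on the rest of the tree nor a projection argument yields $\mathrm{Var}(S_k)\ge ck$.

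The missing ingredient is a slope bound tending to $1$, which is exactly Lemma~\ref{lem:sqrt-rec-lower-slope}: $\Psi(x+h,y)-\Psi(x,y)\ge\frac{x}{x+1}h$. The paper then avoids ancestral paths altogether. Conditioning on $|L|$ and combining the law of total variance with this slope bound gives $V_k\ge\frac2k\sum_{i<k}\alpha_i^2V_i$, where $V_k=\mathrm{Var}(S_k)$ and $\alpha_i=\sqrt i/(\sqrt i+1)$. The seed $V_3>0$ then propagates to $V_k\ge ck$.

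Your fringe-flip argument could be completed with the same lemma, but you would need two more things:
\begin{itemize}
\item a proof that a positive fraction of size-$3$ fringe subtrees have ancestral products $\prod_j x_j/(x_j+1)$ bounded away from $0$, which uses that ancestral subtree sizes grow geometrically;
\item the projection bound $\mathrm{Var}(F)\ge\sum_i\mathrm{Var}(\mathbb{E}[F\mid Z_i])$ in place of Efron--Stein, which only bounds the variance from above.
\end{itemize}
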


\subsection{Open problems} We conclude with some open problems. The first asks whether the coarse probabilistic threshold in \cref{thm:main-fixed} can be sharpened, so that $\beta(\pi)$ is the true probabilistic sharp threshold for online embedding. 

\begin{question}[Sharp lower threshold]
Do there exist vanishing sequences \(\delta_k,\delta'_k\searrow 0\) such that, for every
\(\pi\in\Pi_k\) and every \(\alg\in\mathrm{OA}(\pi)\),
\[
    \PP[\stream]{T_\alg < (1-\delta_k)\beta(\pi)}
    <
    \delta'_k?
\]
\end{question}
\noindent A weaker variant asks whether the same conclusion holds for \(\mu_k\)-asymptotically almost every \(\pi\). 

Our next problem concerns the minimum possible value of $\beta$. 
Based on compelling (but non-rigorous) numerical evidence, we believe that $\beta_k^-$ is saturated by the ``alternating increasing\footnote{Here, ``increasing'' is inserted to distinguish from the terminology ``alternating sequence'', which is used in the literature to describe any sequence $(a_1,a_2,\dots)$ satisfying the weaker condition: $a_1 < a_2$, $a_2 > a_3$, $a_3 < a_4$, and so on.} sequence,'' defined as follows.

\begin{definition}[Alternating increasing sequence]\label{def:alternating-increasing}
For $m \ge 1$, define
\begin{align*}
    \mathrm{Alt}_{2m} &:=(2,1,4,3,\ldots,2m,2m-1)\in\Pi_{2m} \,,\\
    \mathrm{Alt}_{2m+1} &:=(2,1,4,3,\ldots,2m,2m-1,2m+1)\in\Pi_{2m+1} \,.
\end{align*}
By convention, set $\mathrm{Alt}_{1} := (1)$. Also, provided the following limit exists, define
\[
    c_{\mathrm{alt}}
    :=
    \lim_{n\to\infty}\frac{\beta(\mathrm{Alt}_n)}{n^2}\,.
\]
\end{definition}

Identifying $\beta^-_k$ with $\beta(\mathrm{Alt}_k)$ would give an explicit, combinatorial description of the easiest-to-embed permutations.  Moreover, $c_{\mathrm{alt}}$ should be tractable to analyze using existing methods from online LIS, as the alternating increasing sequence has roughly the same self-similarity and renewal structure as the identity permutation. We believe the following (stated in increasing order of difficulty).

\begin{conjecture}[Minimizer of $\beta$]\leavevmode

\begin{enumerate}
    \item The limit $c_{\mathrm{alt}}$ exists and admits the explicit variational formula:
    \[
        c_{\mathrm{alt}} = \min_{x \in (0,1)} \frac{(1 + x)(1 - \log x)}{8(1 - x)} \approx .4883.
    \]
    \item The alternating increasing sequence is the asymptotic minimizer of $\beta$:
    \begin{equation}\label{eq:conj cminus}
        c_- = c_{\mathrm{alt}}\,.
    \end{equation}
    \item For every $k \in \NN$, the alternating increasing sequence is a minimizer of $\beta$:
    \[
        \mathrm{Alt}_k \in \argmin_{\pi \in \Pi_k} \beta(\pi)\,.
    \]
\end{enumerate}
\end{conjecture}

Our next question asks for a similarly explicit description of the maximizers of
\(\beta\).
Although \eqref{eq:beta+} readily yields a dynamic program for $\beta_k^+$ that also gives constructions of maximizers for any fixed \(k\), it is unclear whether the resulting examples belong to a simple family. Our constructions are neither near-monotone nor nearly uniformly random, but they may have a recursive structure when viewed through their associated tree shapes.

\begin{question}
    For all sufficiently large (but finite) $k$, classify 
    \[
        \argmax_{\pi \in \Pi_k} \beta(\pi).
    \]
    Do these extremizers admit a simple description (perhaps in terms of their tree representations)?
\end{question}

In connection with prophet inequalities, we are also interested in the largest gap between online and offline embedding times. Formally, fix $\pi \in \Pi_k$, let $\sigma \sim \mathrm{Unif}(\Pi_n)$, and define $n_c(\pi)$ as the minimum $n$ such that $\PP{\sigma \text{ contains } \pi}\geq 1/2$.

\begin{problem}[Extremal online/offline gap]
    Determine
    \[g:=\limsup_{k \rightarrow \infty} \max_{\pi \in \Pi_k} \frac{\beta(\pi)}{n_c(\pi)}.\]
\end{problem}
For example,  considering $\pi = \mathrm{Id}$ gives $g \ge 2$. The next open question is related to universality in the online setting, i.e., the task of embedding all permutations simultaneously.

\begin{question}[Online universality]
For a fixed $k$,
consider a family of online algorithms $\mathrm{OA}(\Pi_k)$,
with each algorithm in $\mathrm{OA}(\Pi_k)$ producing
online order-isomorphic embeddings of all $k!$ permutations from $\Pi_k$
into the stream $\stream$. For every $\alg\in \mathrm{OA}(\Pi_k)$,
let $T_\alg$ be the (random stopping) time of embedding all elements of $\Pi_k$. What are the asymptotics of 
$$\frac{1}{k^2}
\inf\limits_{\alg\in \mathrm{OA}(\Pi_k)}\,\EE\,T_\alg?$$
\end{question}

In contrast with the offline setting, we suspect this quantity diverges with $k$. 
Our final question is open ended and motivated by the optimal-stopping aspect of the model.
What remains of the theory when the iid stream is replaced by a stream with strong dependencies? The aim here is conceptual rather than tied to any particular examples:

\begin{question}[Structured host] Replace $\stream$ with a stochastic process, such as (discrete-time subsamplings of) the Ornstein-Uhlenbeck process, or a reflected Brownian motion within an interval. Does the optimal expected embedding time of a fixed permutation still admit a tractable representation? 
\end{question}

We record a basic partial result in this direction: consider the finite-host variant in which the stream is replaced by a uniformly random host permutation. For $\pi \in \Pi_k$, let $\hat{\beta}(\pi)$ be the minimum $n$ such that there is an online algorithm for embedding $\pi$ in a sequentially revealed random permutation of length $n$ that succeeds with probability at least $1/2$. An elegant coupling introduced in \cite{permutation} shows that for the online LIS problem, embedding in a stream is at least as difficult as embedding in a random permutation. This same coupling readily extends to our setting of embedding general permutations, giving the analogous result: 

\begin{proposition}\label{prop:Stopi}
    For any $\pi \in \Pi_k$, 
    \[\hat \beta(\pi) \le (1 + o_k(1))\,\beta(\pi).\] 
\end{proposition}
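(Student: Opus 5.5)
Set $n := \lceil (1+\varepsilon)\beta(\pi)\rceil$ for a small fixed $\varepsilon>0$, to be sent to zero slowly with $k$ at the end. The plan is to run the optimal stream algorithm $\alg^*$ of \cref{alg:canonical-general2} on a random permutation through a coupling that turns the host permutation into an iid uniform stream. Concretely, let $U_1,U_2,\dots$ be iid uniform $[0,1]$ variables and let the host $\sigma\in\Pi_n$ be the relative order of $(U_1,\dots,U_n)$; this $\sigma$ is uniform on $\Pi_n$. The online player, however, only sees relative ranks. At time $t$ it learns the rank of $U_t$ among $U_1,\dots,U_t$.

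The coupling from \cite{permutation} lets the player simulate a stream whose values are measurable with respect to the ranks seen so far. After $t$ observations, the player knows that $U_t$ lies in a random gap between previously seen points. The player samples a surrogate value $\tilde X_t$ from the conditional law of $U_t$ given the rank information, using internal randomness; by exchangeability this yields an iid uniform sequence $(\tilde X_t)$. Since the surrogate values are built to respect the observed ranks, order relations among the $\tilde X_t$ coincide with those among the $U_t$, which are exactly the order relations of $\sigma$. Therefore, whenever $\alg^*$ applied to $(\tilde X_t)$ produces an embedding with $T_{\alg^*}\le n$, the same indices form an order-isomorphic copy of $\pi$ in $\sigma$. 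The decisions are made online, because each $\tilde X_t$ depends only on ranks revealed by time $t$ together with independent randomness.

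A cleaner way to realize this is a sequential construction. Given a sorted list of the surrogate values generated so far, a new point falling into the $j$-th gap receives a value drawn uniformly from that gap, with weights compatible with the Dirichlet structure. One checks that the resulting sequence is iid uniform and that its order pattern equals the revealed ranks. I expect verifying this exchangeability and uniformity claim to be the main obstacle, and I would essentially import it from \cite{permutation}, since the argument does not depend on which pattern is being embedded.

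With the coupling in place, the success probability is at least $\PP{T_{\alg^*}\le n}$. Apply the concentration bound \eqref{eq:fixed-prob-UB} with $y := \varepsilon\beta(\pi)^{1/4}$. This gives
\[
\PP{T_{\alg^*} > (1+\varepsilon)\beta(\pi)} \le C\varepsilon^{-2}\beta(\pi)^{-1/2}.
\]
Since $\beta(\pi)\ge k$ trivially (at least $k$ samples are needed), this bound is at most $1/2$ once $\varepsilon = \varepsilon_k \to 0$ slowly, for instance $\varepsilon_k = k^{-1/8}$. Hence $\hat\beta(\pi)\le (1+\varepsilon_k)\beta(\pi)+1 = (1+o_k(1))\beta(\pi)$.

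If internal randomness is disallowed in the definition of $\hat\beta$, a fixed realization of the randomness achieving success probability at least $1/2$ exists by averaging, so this does not affect the conclusion.
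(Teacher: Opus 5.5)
Your concentration step is fine, but the coupling step fails as you have set it up. You let the player see only the relative ranks $R_t$ (the rank of $U_t$ among $U_1,\dots,U_t$). You then assert that surrogates $\tilde X_t$, measurable with respect to $R_1,\dots,R_t$ and independent internal randomness, can be made iid uniform with the same order pattern as $(U_t)$. No such construction exists. Since $\tilde X_1,\dots,\tilde X_{t-1}$ are functions of $R_1,\dots,R_{t-1}$ and the internal randomness, they are independent of $R_t$, which is uniform on $[t]$. Because the order relations must match, $\tilde X_t$ is forced into the $R_t$-th gap of $\tilde X_1,\dots,\tilde X_{t-1}$. So each gap is chosen with conditional probability $1/t$, whereas for an iid uniform sequence the $j$-th gap must be chosen with probability equal to its length. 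Already at $t=2$ this gives $\Pr(\tilde X_2<\tilde X_1\mid \tilde X_1)=1/2$ instead of $\tilde X_1$. No choice of within-gap (``Dirichlet-compatible'') law can repair the gap-selection law. The step you planned to import from \cite{permutation} is therefore not something that coupling provides in a relative-rank model.

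The missing idea is that, in the definition of $\hat\beta$, the stopping times are adapted to $X_0,\sigma(1),\dots,\sigma(n)$. The player sees the absolute rank $\sigma(t)\in[n]$, not just the relative rank, and this is exactly what the coupling in the paper exploits. Sample in advance, as internal randomness, the order statistics $Y_1<\dots<Y_n$ of $n$ iid uniforms (e.g.\ normalized partial sums of $n+1$ iid exponentials), and set $\tilde X_t:=Y_{\sigma(t)}$. Since $\sigma$ is uniform and independent of $(Y_i)$, the vector $(\tilde X_1,\dots,\tilde X_n)$ is iid uniform. Each $\tilde X_t$ is computable at time $t$, and the order pattern of $(\tilde X_t)$ is exactly that of $\sigma$. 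With this replacement the rest of your argument goes through: run $\alg^*$ truncated at $n=\lceil(1+\varepsilon)\beta(\pi)\rceil$ and apply \eqref{eq:fixed-prob-UB} with $y=\varepsilon\beta(\pi)^{1/4}$ and $\beta(\pi)\ge k$. This agrees with the paper's proof, which uses $\beta(\pi)\ge k^2/4$ in place of $\beta(\pi)\ge k$, an inessential difference.
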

\noindent Here, the error $o_k(1)$ is uniform over $\pi \in \Pi_k$. The proof is given in the appendix.

\subsection{Organization} \Cref{sec:ov} fixes notation and conventions, and then gives a technical overview of the main proofs. \Cref{sec:recursion} proves the dynamic-programming recurrence \eqref{eq:rec fixed}, constructs the optimal recursive algorithm used in \eqref{eq:beta-achieved}, and establishes the lower bound \eqref{eq:fixed-prob-LB}. \Cref{sec:prob} proves the upper-tail guarantee \eqref{eq:fixed-prob-UB}, and then proves the typical-pattern concentration and central limit theorem in \cref{thm:main-random}. \Cref{sec:scaling} begins the proof of the scaling limits in \cref{thm:numerical-scaling} by using dynamic programming and sub/super-additivity to reduce the theorem to estimation of finite auxiliary sequences. \Cref{sec:numer} uses rigorous interval arithmetic to estimate these sequences, establishing \cref{thm:numerical-scaling}; the accompanying coding supplement is recorded in \url{https://github.com/dylanaltschuler/online-permutation-embedding}.

\section{Technical Overview}\label{sec:ov}
\subsection{Notation and conventions}\label{sec:ov-notation}
Before surveying the proofs of the main results, we collect key preliminaries. 
Let $[k] := \{1,2,\dots,k\}$ and $\NN := \{1,2,\dots\}$. Denote the set of permutations on $[k]$ by $\Pi_k$ with the convention $\Pi_0 = \{\emptyset\}$. Let $\mu_k$ denote the uniform measure on $\Pi_k$. Throughout this article, $k \in \{0,1,2,\dots\}$.
\begin{itemize}
    \item \textbf{Stream}. Let $\stream = (X_i)_{i\in \NN}$ denote a sequence of iid draws from the uniform distribution on $[0,1]$. The natural and augmented filtrations are, respectively,
    \[
        \mathcal{F}_t^\circ := \sigma(X_1,\dots,X_t),
        \qquad
        \mathcal{F}_t := \sigma(X_0,X_1,\dots,X_t),
        \qquad t\ge0,
    \]
    where an empty list generates the trivial $\sigma$-field and $X_0$ is a distinguished random variable independent of $\stream$, called the ``seed,'' representing possible internal randomization of algorithms.
    \item \textbf{Auxiliary randomness}. For concreteness, we regard $X_0$ as a random element of a standard Borel space $\mathcal X_0$ rich enough to encode a countable sequence of independent auxiliary random variables. Conditioning on the seed and fixing a favorable seed value shows that access to internal randomness does not change the optimal expected run-time. It nevertheless enables natural coupling arguments that are convenient for analysis.

    \item \textbf{Algorithm}. An online embedding algorithm for a permutation $\pi$ is a sequence of stopping times $(\tau_1,\dots,\tau_k)$ adapted to $(\mathcal F_t^\circ)_{t\ge0}$, almost surely satisfying both $1\le \tau_1<\cdots<\tau_k$ and $(X_{\tau_1},\dots,X_{\tau_k}) \simeq \pi$. When auxiliary randomness is explicitly allowed, the stopping times are instead adapted to $(\mathcal F_t)_{t\ge0}$. For an embedding algorithm $\alg = (\tau_1,\dots,\tau_k)$, we write $T_\alg := \tau_k$ to denote the run-time. 

    \item \textbf{Stopping time}. We often use the following deterministic representation of stopping times. If $\tau_j$ is adapted to $(\mathcal F_t)_{t\ge0}$, then, after modifying on null sets if necessary, there are Borel functions
    \[
        g_j^{(\ell)}:{\mathcal X_0}\times [0,1]^\ell\to\{0,1\}\,,
    \]
    where
    \[
        \tau_j=\min\big\{\ell\geq 1:\;g_j^{(\ell)}(X_0,X_1,\dots,X_\ell)=1\big\},\quad 1\leq j\leq k.
    \]
    In the non-random case, the same representation holds with the
    $\mathcal X_0$-coordinate omitted.

    \item \textbf{Left and right subpermutations}.  
    Let $\pi\in\Pi_k$. We define the left and right permutations of $\pi$, denoted by $L := L(\pi)$ and $R:=R(\pi)$, as the unique permutations satisfying
    \begin{align*}
        L \in \Pi_{\pi(1)-1}\,, \quad L &\simeq (\pi(j)\,:\, \pi(j) < \pi(1)) \, \\
        R \in \Pi_{k-\pi(1)}\,, \quad R &\simeq (\pi(j)\,:\, \pi(j) > \pi(1))  \,.
    \end{align*}
    That is, $L$ and $R$ are the permutations induced by $\pi$, restricted to the ranges $[\pi(1)-1]$ and $[k] \setminus [\pi(1)]$, respectively.
    \item \textbf{Tree representation.}
For $\pi\in\Pi_k$, let $\tree_\pi$ be the binary search tree
obtained by inserting the keys
$\pi(1),\pi(2),\ldots,\pi(k)$
in order, with each node labeled by its insertion time. More explicitly, $\tree_\pi$ is the labeled tree with root labeled 1, label-set given by $[k]$, and all nodes having at most two children, constructed top-down as follows. For a node labeled $m$, the left and right children of $m$ are, respectively:
\begin{align*}
            &\min\cb{ \ell > m :
            \pi(\ell) < \pi(m)
            \text{ and }
            [\pi(\ell),\pi(m)]
            \cap
            \cb{\pi(s): s < m}
            =
            \varnothing}\,, \\
            &\min\cb{ r > m :
            \pi(r) > \pi(m)
            \text{ and }
            [\pi(m),\pi(r)]
            \cap
            \cb{\pi(s): s < m}
            =
            \varnothing}\,.
    \end{align*}
    Note that these sets can be empty, in which case $m$ can have degree one or zero.
    We refer to Figure~\ref{fig:perm-structure} for an illustration.
    As shorthand, we denote the left and right maximal subtrees of the root by $\tree_L := \tree_{L(\pi)}$ and $\tree_R := \tree_{R(\pi)}$. We also adopt the conventions:
\[
    \beta(\tree_\pi) := \beta(\pi)\,,  \quad \text{ and } \quad  \beta(\emptyset) = 0\,.
\]
    \item \textbf{Subtree and fringe tree}. By convention, a ``subtree'' rooted at $v$ always refers to the maximal subtree (also called the \textit{fringe subtree}), consisting of the root $v$ along with all descendants of $v$ in the parent tree.
    \item \textbf{Recurrence kernel and empty subpermutation convention}. Define the extended-value function on $[0,1]$
    \begin{equation}\label{eq: phipq definition}
    \phi_{p,q}(x)
    :=
    \frac{p}{x}\ind_{\{p>0\}}
    +
    \frac{q}{1-x}\ind_{\{q>0\}}.
    \end{equation}
By convention, set $\frac{p}{x}\ind_{\{p>0\}}$ to zero if $p=x=0$, and to $+\infty$ if $x=0$ when $p\neq0$; a symmetric convention is adopted for the other term. We refer to $\phi$ as a ``continuation cost,'' as $\phi_{p,q}(x)$ gives the expected cost of the two recursive subproblems when their costs are $p$ and $q$ and the root is embedded at relative location $x$.

Next, for $0\le a<b\le1$, set
\begin{equation}\label{eq: Gpq definition}
    G_{p,q}(a,b)
    :=
    \frac{
        1+\int_a^b \phi_{p,q}(x)\,dx
    }{b-a},
\end{equation}
where the integral is understood as an improper integral and can take the value $+\infty$. We further define for $p,q \ge 0$:
\begin{equation}\label{eq:def-G1}
G(p,q)
:=
\min_{0 \le a < b \le 1} G_{p,q}(a,b) = \min_{0 \le a < b \le 1}
\frac{1+p\log(b/a)+q\log((1-a)/(1-b))}{b-a}\,.
\end{equation}
The logarithmic representation is valid as written for $p,q > 0$, and should be understood with the same convention of omitting terms when $p$ or $q$ is zero. In particular, the Bellman equation \eqref{eq:rec fixed} is: $$\beta(\tree) = G(\beta(\tree_L),\beta(\tree_R)).$$ Moreover, the convention $\beta(\emptyset)/0 := 0$ in \eqref{eq:rec fixed} is implied by the analogous conventions for $\phi_{p,q}$ and $G(p,q)$.
\end{itemize}

We now give a technical overview of the rest of the paper.
\subsection{Bellman equation for \texorpdfstring{$\beta(\pi)$}{β(π)}} For a finite rooted binary tree $S$, define $b(S)$ recursively by
\[
    b(\emptyset):=0,
    \qquad
    b(S):=G\big(b(S_L),b(S_R)\big),
\]
where $S_L$ and $S_R$ denote left and right fringe tree, respectively,
obtained by removing the root of $S$
(so that $S_L$ and/or $S_R$ can be empty)\footnote{We assume that each child node within the tree is labeled either ``left'' or ``right'', so that no ambiguity in the recursive decomposition of $S$ into subtrees arises.}.
Our goal is to show $b(S) = \beta(S)$. 

The upper bound $\beta(\pi) \le b(\tree_\pi)$ follows from considering the embedding algorithm naturally implied by \eqref{eq:rec fixed}, namely \Cref{alg:canonical-general2}. When a subtree $S$ is to be embedded in an interval of length $d$, the algorithm waits until the stream enters the rescaled optimizing interval for $G(b(S_L),b(S_R))$. Conditional on the accepted relative location $x$, the lower and upper thinned streams are fresh uniform streams, and the expected active running times of the two recursive calls are $b(S_L)/(dx)$ and $b(S_R)/(d(1-x))$. Averaging over the optimizing interval gives total expected running time $b(S)/d$. This construction is formalized as \cref{alg:canonical-general2} and, \textit{a posteriori}, is an optimal embedding.

The matching lower bound develops a potential function argument that also yields the probabilistic lower bound \eqref{eq:fixed-prob-LB}. This argument combines analysis of our specific potential function with several existing techniques from the optimal stopping theory. The main idea is to study the expected ``continuation cost'' of embedding the remainder of a permutation, once a prefix is embedded. The surplus continuation cost for an arbitrary algorithm forms a submartingale.
On the other hand, under the optimal algorithm, the corresponding process is a martingale. Optional stopping allows for bounding the surplus.
This is related to Snell-envelope arguments \cite{Snell} in 
optional stopping literature; we also refer to \cite{arlotto-nguyen-steele} for related constructions.

In more detail, given a valid partial embedding ${\bf x}\in(0,1)^j$ for the $j$--prefix of $\pi$, delete the labeled vertices from $\tree_\pi$ corresponding to the prefix. The remaining components form a collection $\calC({\bf x})$ of fringe subtrees $S$, each of which is constrained to be embedded into certain interval $I_S({\bf x})\subset(0,1)$ (see definition \eqref{eq: interval IS definition} further in the text). Define the potential
\[
    V({\bf x})
    :=
    \sum_{S\in\calC({\bf x})}
    \frac{b(S)}{|I_S({\bf x})|}.
\]
Here, $V$ represents the aforementioned continuation cost, in other words, $V$
corresponds to optimal embedding of the rest of the permutation $\pi$.
We then make the key claim:
\begin{equation}\label{eq:ov-optimality}
        \int_0^1
    \big(V({\bf x})-V({\bf x}\oplus z)\big)_+\,dz
    =1.
\end{equation}
The proof is analytic and developed in \cref{prop:F-root}.
Intuitively speaking, \eqref{eq:ov-optimality} expresses Bellman's principle: an optimal strategy should produce embedding of any prefix in an optimal manner. Next, for an arbitrary embedding algorithm, if ${\bf X}_t$ is its selected prefix by time $t$, letting
\[
    W_t:=\min\{V({\bf X}_t),b(\tree_\pi)\},
    \qquad
    M_t:=t+W_t,
\]
it is readily deduced from \eqref{eq:ov-optimality} that $(M_t)$ is a submartingale. Since $W_0=b(\tree_\pi)$ and $W_{T_\alg}=0$, optional stopping gives $\E{T_\alg}\ge b(\tree_\pi)$. Together with the recursive upper bound, this proves \eqref{eq:rec fixed} and the optimality of \cref{alg:canonical-general2}. The same submartingale also incidentally gives the lower threshold \eqref{eq:fixed-prob-LB}. 

\subsection{Probability bounds}

Once the recurrence \eqref{eq:rec fixed} is established, our next two goals are to prove the upper threshold for the canonical algorithm and to investigate $\beta(\pi)$ for typical (i.e., random) permutations $\pi$. The lower threshold was proved by the submartingale argument above.

\subsubsection{Upper threshold}
We aim to establish \eqref{eq:fixed-prob-UB}, which states that the embedding time of
$\pi$ under Algorithm~\ref{alg:canonical-general2} is concentrated about its
mean $\beta(\pi)$. The proof first develops a Bellman equation that upper bounds the variance of $T_{\alg^*}$, obtained from using the law of total variance to condition on $X_{\tau_1}$ and the number of future arrivals that fall into $[0,X_{\tau_1}]$ and $[X_{\tau_1},1]$ (i.e., the lower and upper future streams). 

The next task is to extract from this recurrence the global bound that uniformly for all $\pi \in \Pi_k$,
\[
    \pVar{T_{\alg^*}} \le C\,\beta(\pi)^{3/2}.
\]
Note that \eqref{eq:fixed-prob-UB} would then follow via Chebyshev's inequality. The principal concern is that fluctuations of $X_{\tau_1}$ might later force a future portion of the embedding into an interval that is too small, causing a disproportionate
increase in the conditional variance of the remaining embedding time. The reason this
does not happen is that such an occurrence would also cause a disproportionate increase in the conditional expectation of the remaining embedding time, so that the optimal choice of $(a,b)$ in \eqref{eq:rec fixed} necessarily selects for low variance of the remaining embedding time. This ``self-balancing'' is made precise through a calculus computation that uses the first-order stationary conditions in \eqref{eq:rec fixed} to estimate the variance of the optimal algorithm in terms of its expectation. This bootstrapping approach leads to the claimed variance bound.

\subsubsection{Expected embedding time of typical patterns}
Lastly, we address \cref{thm:main-random} which seeks to understand the distribution of
$\beta(\pi)$ for $\pi\sim\mathrm{Unif}(\Pi_k)$.
The key observation is that the distribution of $\tree_\pi$  precisely coincides with a classical model of random binary search tree (BST) for which concentration results are readily available. More precisely, results are readily available for linear functionals of the fringe subtrees of a random BST:
\[
    \sum_{v \in V(\tree)} \eta(\tree_v)\,,
\]
where $\eta$ is a functional mapping trees to $\RR$. Typically, uniform bounds on $\eta$ are needed to apply existing results (since this enables construction of certain martingales with bounded increments). 

Our proof proceeds by defining $\eta$ via:
\[
    \sqrt{\beta(\tree)} = \sqrt{\beta(\tree_L)} + \sqrt{\beta(\tree_R)} + \eta(\tree),
\]
from which a simple recursion gives:
\[
    \sqrt{\beta(\tree)}
    =
    \sum_{v\in V(\tree)} \eta(\tree_v).
\]
The choice to work with $\sqrt{\beta}$, rather than directly with $\beta$, emerges from analyzing the first-order stationary conditions for $G(p,q)$ in \eqref{eq:def-G1}.
A calculus computation establishes that $\eta(\tree) \in [0,1]$, allowing us to apply a central limit theorem of Holmgren and Janson \cite{tree-CLT} to obtain a normal limit for $\sqrt{\beta}$ and, via the delta method, for $\beta$. 

A consequence of invoking such a general tool is that the variance parameter in the CLT is non-explicit and, in particular, not guaranteed to be non-zero. Hence, a final substantial argument is needed to pin down the exact asymptotic scale of the fluctuations, and in particular to rule out ``superconcentration'' phenomena. The argument proceeds inductively based on $k$. As a base case, an elementary computation shows that $\beta(\pi)$ has non-zero variance for $\pi \sim \mu_3$. The inductive step utilizes the recurrence \eqref{eq:rec fixed} 
to prove that for any tree $\tree$, variance from $\beta(\tree_L)^{1/2}$ and $\beta(\tree_R)^{1/2}$ contribute non-negligible variance to $\beta(\tree)^{1/2}$. Making this argument quantitative yields an inductive variance lower bound that matches the order of growth of the upper bound from the Holmgren--Janson result. 

\subsection{Scaling laws}
The proof of \cref{thm:numerical-scaling} proceeds in two stages. First, in \cref{sec:scaling}, these asymptotic estimates are reduced to finite computations. Then, in \cref{sec:numer}, interval arithmetic is used to complete these finite computations with rigorous error guarantees.

\subsubsection{Reduction to finite computations} We
consider
\[
    \beta_k^+
    :=
    \max_{\pi\in\Pi_k}\beta(\pi),
    \qquad
    \beta_k^-
    :=
    \min_{\pi\in\Pi_k}\beta(\pi),
    \qquad
    \beta_k^\typ
    :=
    \E[\pi\sim\mathrm{Unif}(\Pi_k)]{\beta(\pi)}.
\]
We begin by observing that the recurrence \eqref{eq:rec fixed}, combined with the fact that $G$ is monotone in both of its arguments, readily yields dynamic programs for the
extremal sequences:
\[
    \beta_k^+
    =
    \max_{i\in[k]}G(\beta_{i-1}^+,\beta_{k-i}^+),
    \qquad
    \beta_k^-
    =
    \min_{i\in[k]}G(\beta_{i-1}^-,\beta_{k-i}^-).
\]
The typical sequence $\beta_k^\typ$ does not admit such a recurrence, since
expectation does not commute with the minimization in \eqref{eq:rec fixed}. However, interchanging expectation and the minimization does yield an inequality: $\beta_k^\typ\le \gamma_k$ 
where $\gamma_k$ is the tractable auxiliary sequence
\[
    \gamma_0=0,\qquad
    \gamma_1=1,\qquad
    \gamma_k
    =
    \frac1k\sum_{i=1}^k G(\gamma_{i-1},\gamma_{k-i}).
\]

We next prove the existence of the scaling limits $c_-$, $c_+$, and $c_\typ$ via subadditivity. Subadditivity arises from a
natural class of constrained embedding strategies. Fix \(\ell \in [k]\) and \(\alpha\in(0,1)\), and enforce the constraints that $\{\pi(j)\,:\,\pi(j) \le \ell\}$ is embedded in $[0,\alpha]$ and $\{\pi(j)\,:\,\pi(j) > \ell\}$ is embedded in $[\alpha,1]$. This separates the embedding task into two independent subproblems, yielding the desired subadditivity structure for $\beta^\pm_k/k$ and $\beta^\typ_k/k$. 

Fekete's lemma then both establishes the existence of
the corresponding limits and also suggests a procedure for generating asymptotic bounds via finite computations. Considering $c_+$ for example, for any $k_0 \in \NN$:
\[
    c_+ := \lim_{k \to \infty} \frac{\beta_k^+}{k^2} = \inf_{k \ge 1} \frac{\beta_k^+}{k^2} \le \frac{\beta_{k_0}^+}{k_0^2}
\]
A finite-computation lower bound on $c_+$ is symmetrically obtained after proving that the shifted sequence $(\beta_{k-1}^+)^{1/2}$ is \textit{super}additive. For lower bounding $c_-$, no obvious lower envelope is available, and we instead prove the analytic uniform bound $\beta(\pi) \ge k^2/4$ for all $\pi \in \Pi_k$ via calculus estimates on $G$. On the other hand, we \textit{can} rigorously efficiently estimate the finite sequence $\beta_k^-$; while this does not immediately imply lower bounds for $c_-$, it actually lets us prove a useful lower bound for $c_\typ$ via \eqref{eq:betagamma}.

In total, the proof of \cref{thm:numerical-scaling} now reduces to rigorous
estimates of \(\gamma_k\), \(\beta_k^+\), and $\beta_k^-$ for large \(k\).

\subsubsection{Interval arithmetic}

We now turn to the rigorous estimation $\gamma_k$ and $\beta_k^\pm$, carried out in \cref{sec:numer}. For brevity, let $\xi_k$ denote one of $\{\gamma_k, \beta_k^-,\beta_k^+\}$ as all are handled similarly. Then $\xi_k$ is expressed through a dynamic program involving basic arithmetic, maxima and minima of finite collections of scalars, and repeated evaluation of the kernel
\[
    G(p,q)
    =
    \min_{0\le a<b\le1}
    \frac{
        1+p\log(b/a)+q\log((1-a)/(1-b))
    }{b-a}.
\]
The key structural fact is again that \(G\) is monotone in both inputs. Hence if $p$ and $q$ are rigorously known to lie in some intervals denoted by $[p] := [\underline p, \overline p]$ and $[q] := [\underline q, \overline q]$, then $G(p,q) \in [G]([p],[q])$ where
\begin{equation}\label{eq:overview-G}
    [G]([p],[q])
    :=
    [G(\underline p,\underline q),G(\overline p,\overline q)]
\end{equation}
So, once the $(\xi_j)_{j < k}$ are enclosed in intervals,
monotonicity propagates these enclosures through sums, maxima, and minima, yielding enclosing intervals for $\xi_k$. 

It remains to certify evaluations of $G(\underline p,\underline q)$ and $G(\overline p,\overline q)$, as they involve a non-elementary minimization problem. Using the first-order optimality conditions, our proof first reduces this minimization to the one-dimensional problem of finding the unique root of some ``residual'' function \(\mathsf I_{p,q}:\RR \to \RR\). We then prove that for any interval $[u]$, there is a corresponding interval $[G_{[u]}]$ with
\[
    \mathsf I_{p,q}(\underline u) \le 0 \le \mathsf I_{p,q}(\overline u) \Longrightarrow G(p,q) \in [G_{[u]}]\,.
\]
This is carried out in Lemma~\ref{lem:G-cert} and is the main mathematical content of \cref{sec:numer}. Crucially, $\mathsf I_{p,q}$ is a composition of elementary functions\footnote{That is, functions which can be computed to arbitrary floating point precision with rigorous guarantees using standard numerical packages; e.g., arithmetic, maxima or minima of finite collections of scalars, hyperbolic trigonometric functions.}. Hence, for any interval $[u]$ we can rigorously verify claims of the form ``$[G]([p],[q]) \subset [G_{[u]}]$'' by computing the lower and upper ``slacks''
\[
    (\mathsf I_{\underline p, \underline q}(\underline u))_-, \quad (\mathsf I_{\overline p, \overline q}(\overline u))_+ 
\]
and checking that they are non-zero and larger in magnitude than any floating point error incurred by the elementary operations in computing $\mathsf I_{p,q}$. Note that these certificates are ``sound,'' not ``complete'': successful sign checks imply the claimed enclosure, but our procedure need not validate every true enclosure.

In summary, the rigorous numerical computation
starts from the exact boundary conditions \(\xi_0\in[0,0]\) and
\(\xi_1\in[1,1]\), then inductively certifies interval enclosures for
\(\xi_k\) using the previously certified enclosures. For the purposes of verifying \cref{thm:numerical-scaling}, considering $k$ up to $k_0 = 5000$ suffices. The candidate intervals,
verification script, and certified output are supplied in the computational
supplement.

\section{Recursive formula and algorithm for \texorpdfstring{$\beta(\pi)$}{β(π)}}\label{sec:recursion}

In this section, we prove the recurrence~\eqref{eq:rec fixed}. For the reader's
convenience, we restate it here formally as a theorem in terms of the notation introduced in
\cref{sec:ov-notation}.

\begin{theorem}[Recursion formula for $\beta$]
\label{thm:recursion-formula}
Let $\pi$ be a non-empty permutation. Then
\[
    \beta(\pi)
    =
    G\big(\beta(L(\pi)),\beta(R(\pi))\big),
\]
where $G$ is given by \eqref{eq:def-G1}.
\end{theorem}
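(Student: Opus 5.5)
The plan is to define $b(S)$ recursively on binary trees by $b(\emptyset)=0$ and $b(S)=G(b(S_L),b(S_R))$, and to show both $\beta(\pi)\le b(\tree_\pi)$ and $\beta(\pi)\ge b(\tree_\pi)$. We induct on $k$ throughout. By the recursion one has $b(\tree_\pi)=G(b(\tree_L),b(\tree_R))$, so the theorem follows from $\beta(\pi)=b(\tree_\pi)$ applied to $\pi$, $L(\pi)$ and $R(\pi)$.

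\emph{Upper bound.} Take $(a_*,b_*)$ attaining the minimum in $G(p,q)$ with $p=b(\tree_L)$ and $q=b(\tree_R)$. Existence follows from lower semicontinuity and coercivity of $G_{p,q}$: it blows up as $b-a\to0$, and also at $a=0$ when $p>0$ or at $b=1$ when $q>0$. Let $\tau_1$ be the first time the stream lands in $[a_*,b_*]$, so $\E{\tau_1}=1/(b_*-a_*)$ and $X_{\tau_1}$ is uniform on $[a_*,b_*]$. Conditional on $X_{\tau_1}=x$, the subsequent arrivals falling in $[0,x]$ (respectively $[x,1]$), rescaled, form independent iid uniform streams. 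Each such arrival occurs at geometric spacing with mean $1/x$ (respectively $1/(1-x)$). Run the inductively given optimal algorithms for $L$ and $R$ on these two thinned streams. The lower and upper elements interleave in $\pi$, but the constraints decouple, so each sub-algorithm simply waits for its own substream and there is no timing conflict.

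The finishing time is at most the sum of the two sub-finishing times, measured in original stream time. By Wald's identity, $\E{\text{time to complete }L}=\beta(L)/x$, and similarly $\beta(R)/(1-x)$ for $R$. Actually a sum is too generous, since the two streams run in parallel, so we must be careful. I expect the paper's sum formula to be exact for a sequential execution, one in which the algorithm must embed elements in the order given by $\pi$. Ordering matters: when the next element of $\pi$ belongs to $L$, arrivals in $[x,1]$ are wasted. The correct accounting is therefore that the total time equals the sum over elements of the waiting time for the right substream. The time spent on $L$ counts only $L$-relevant arrivals, with expected total $\beta(L)/x$; likewise for $R$. The two costs add because the waits are sequential. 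Averaging over $x$ gives exactly $G_{p,q}(a_*,b_*)=b(\tree_\pi)$.

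\emph{Lower bound.} Here I would follow the potential argument from the overview. For a valid prefix embedding ${\bf x}$, define $V({\bf x})=\sum_{S\in\calC({\bf x})}b(S)/|I_S({\bf x})|$. The key identity is \eqref{eq:ov-optimality}: $\int_0^1(V({\bf x})-V({\bf x}\oplus z))_+dz=1$, where the next element to embed is the root of some component $S$ with interval $I=I_S$. Only that term of $V$ changes. Rescaling $I$ to $[0,1]$, the drop is $\frac{1}{|I|}\big(b(S)-\phi_{p,q}(u)\big)$, where $z$ corresponds to relative position $u$ and $p,q$ are the $b$-values of $S_L,S_R$. Values of $z$ outside $I$ are invalid; we treat them as giving infinite $V$, so they contribute nothing to the positive part. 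The identity thus reduces to showing
\[
\int_0^1 \big(G(p,q)-\phi_{p,q}(u)\big)_+\,du=1 .
\]
This is exactly the first-order condition of the minimization defining $G$. The function $\phi_{p,q}$ is convex, so $\{\phi\le G\}$ is an interval $[a,b]$. The equality $G=(1+\int_a^b\phi)/(b-a)$ at the optimum, combined with stationarity, forces $\phi(a)=\phi(b)=G$ at interior endpoints (with the boundary cases handled separately). Rearranging then gives $(b-a)G-\int_a^b\phi=1$, which is the claim.

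Given the identity, for an arbitrary algorithm let $W_t=\min\{V({\bf X}_t),b(\tree_\pi)\}$. Since at each step the algorithm either skips, leaving $V$ unchanged, or accepts $z$, a one-step bound gives $\E{W_{t+1}-W_t\mid\mathcal F_t}\ge -1$. Hence $t+W_t$ is a submartingale. Optional stopping, justified by truncating at $T\wedge n$ and using monotone convergence, gives $\E{T}\ge W_0-\E{W_T}=b(\tree_\pi)$.

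The main obstacle is the calculus in the key identity. One must handle the boundary optimizers ($a=0$ or $b=1$, when $p=0$ or $q=0$) and the truncation by $b(\tree_\pi)$, and check that the positive-part inequality survives taking the minimum with $b(\tree_\pi)$. For the latter I would use the fact that $b$ is monotone under removing subtrees, so that $V\le b(\tree_\pi)$ along optimal paths.
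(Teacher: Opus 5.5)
Your proposal follows the paper's proof essentially step for step. For $\beta(\pi)\le b(\tree_\pi)$ you use the same thinning-plus-Wald construction, with off-phase arrivals discarded, as your corrected accounting describes. For the lower bound you use the same potential $V$, the same reduction of $\int_0^1(V(\mathbf{x})-V(\mathbf{x}\oplus z))_+\,dz=1$ to $\int_0^1(G(p,q)-\phi_{p,q}(u))_+\,du=1$, and the same submartingale $t+\min\{V,b(\tree_\pi)\}$ closed by optional stopping.

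The only real difference is how you obtain this calibration identity. You take a minimizer, obtained by coercivity, and use the stationarity conditions $\phi(a)=\phi(b)=G$ together with convexity, treating $p=0$ or $q=0$ separately. The paper instead defines $v_*$ as the unique root of $F(v)=\int_0^1(v-\phi)_+=1$ and shows $v_*=\min G_{p,q}$ via $\int_a^b(v_*-\phi)\le F(v_*)$. That gives existence, the identity, and all boundary cases at once, without differentiating $G_{p,q}$. Your route works but needs that case analysis.

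One correction concerns the truncation. Your suggested justification, that $V\le b(\tree_\pi)$ along optimal paths, is beside the point: the submartingale property must hold for an arbitrary algorithm. Along such paths $V$ can far exceed $b(\tree_\pi)$, for instance if the root is embedded near $0$ while $L\neq\emptyset$. What you need is just the elementary inequality $\min\{v,B\}-\min\{w,B\}\le(v-w)_+$ for all $w\in[0,\infty]$. It holds because $w\mapsto\min\{w,B\}$ is nondecreasing and $1$-Lipschitz, and it is exactly what the paper uses.
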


We first study the structure of the minimization problem underlying $G$. An optimal stopping argument will then be used to prove that $\beta$ is lower bounded by $G$; a direct construction will supply the upper bound.  

\subsection{Minimization of \texorpdfstring{$G_{p,q}$}{G(p,q)}}

\begin{proposition}[Minimization of $G_{p,q}$]
\label{prop:F-root}
Fix $p,q \ge 0$, let $\phi := \phi_{p,q}$ be defined by \eqref{eq: phipq definition}, and, for $v\in\RR$, define $I_v:=\{x\in[0,1]\,:\,\phi(x)<v\}$ and
\[
    F(v):=\int_{I_v}(v-\phi(x))\,dx
    =
    \int_0^1 (v-\phi(x))_+\,dx.
\]
Then the following hold:
\begin{enumerate}
    \item For each $v\in\RR$, $I_v$ is a (possibly empty) interval subset of $[0,1]$.
    \item There exists a unique $v_*$ such that $F(v_*)=1$.
    \item Let $a_*:=\inf I_{v_*}$ and $b_*:=\sup I_{v_*}$ denote the endpoints of  $I_{v_*}$. Then $0\le a_*<b_*\le1$, and
    \begin{equation}\label{eq:v* min}
        v_*
        =
        G_{p,q}(a_*,b_*)
        =
        \min_{0\le a<b\le1}G_{p,q}(a,b),
    \end{equation}
where $G_{p,q}$ is given by \eqref{eq: Gpq definition}.
\end{enumerate}
\end{proposition}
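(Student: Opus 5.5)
Part (1) follows from convexity of $\phi=\phi_{p,q}$ on $(0,1)$. Each of $x\mapsto p/x$ and $x\mapsto q/(1-x)$ is convex on $(0,1)$, and each is either identically zero (when the coefficient vanishes) or $+\infty$ at the corresponding endpoint under the stated conventions. Hence $\phi$ is a convex extended-real function on $[0,1]$, and its strict sublevel set $I_v$ is convex, i.e.\ an interval. In the degenerate case $p=q=0$ we have $\phi\equiv 0$, so $I_v$ is either $\emptyset$ or all of $[0,1]$, which is again an interval.

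For part (2), I will study $F(v)=\int_0^1 (v-\phi)_+\,dx$. Set $m:=\inf\phi$. The integrand is nondecreasing and continuous in $v$, so by monotone and dominated convergence $F$ is continuous and nondecreasing. We have $F(v)=0$ for $v\le m$, and $F(v)\ge v-\phi(1/2)\to\infty$ as $v\to\infty$. For $v>m$ the set $I_v$ has positive measure, because $\phi$ is continuous where finite and $I_v$ is a nonempty open subset of $[0,1]$. Consequently $F$ is strictly increasing on $(m,\infty)$: for $v<v'$ we get $F(v')-F(v)\ge (v'-v)|I_v|>0$. The intermediate value theorem on $(m,\infty)$ then gives a unique $v_*$ with $F(v_*)=1$, and uniqueness holds overall since $F=0<1$ on $(-\infty,m]$.

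For part (3), first note that $v_*>m$ forces $|I_{v_*}|>0$, so $a_*<b_*$. Writing $F(v_*)=1$ as $\int_{a_*}^{b_*}(v_*-\phi)\,dx=1$ and rearranging gives $v_*(b_*-a_*)=1+\int_{a_*}^{b_*}\phi$, which is exactly $v_*=G_{p,q}(a_*,b_*)$. It remains to prove minimality. Take any $0\le a<b\le 1$ and put $g:=G_{p,q}(a,b)$; if $g=\infty$ there is nothing to show. Otherwise $g(b-a)=1+\int_a^b\phi$, so
\[
1=\int_a^b (g-\phi)\,dx\le \int_0^1 (g-\phi)_+\,dx=F(g).
\]
Since $F$ is nondecreasing, $F(v_*)=1$, and $F$ is strictly increasing past $m$, this yields $g\ge v_*$: if $g<v_*$ then either $g\le m$, giving $F(g)=0$, or $F(g)<F(v_*)=1$, and both contradict $F(g)\ge 1$. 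Hence $v_*$ is the minimum and it is attained at $(a_*,b_*)$.

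The main obstacle I expect is bookkeeping for the endpoint conventions. When $p=0$, $\phi$ may be finite at $0$, and $I_v$ may or may not contain the endpoint, so I would work throughout with the closure or interior of $I_v$; since the integrals ignore endpoints, this changes nothing. I also need the improper integral for $G_{p,q}$ to agree with the integrals defining $F$, which holds because the integrands are nonnegative and monotone convergence applies.
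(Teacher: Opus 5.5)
Your proof follows the paper's argument almost step for step. Part (1) uses convexity of $\phi$. Part (2) uses continuity and monotonicity of $F$, strict increase once $F>0$ (equivalently past $m$), and the intermediate value theorem. Part (3) uses the same rearrangement to get $v_*=G_{p,q}(a_*,b_*)$. For minimality you evaluate $F$ at $g=G_{p,q}(a,b)$ and invoke strict monotonicity. The paper instead notes that $\int_a^b(v_*-\phi)\,dx\le F(v_*)=1$, which gives $G_{p,q}(a,b)\ge v_*$ directly. Both versions are fine.

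One step is false as written: the bound $F(v)\ge v-\phi(1/2)$. Jensen's inequality gives $\int_0^1\phi\,dx\ge\phi(1/2)$, which points the wrong way, and the inequality does fail. For $q=0<p$ one computes $F(v)=v-p-p\log(v/p)$ for $v\ge p$. This is strictly smaller than $v-2p=v-\phi(1/2)$ once $v>ep$. The conclusion $F(v)\to\infty$ is still true, and the fix is the one the paper uses: bound $\phi$ on a compact subinterval. For example, $\phi\le 4(p+q)$ on $[1/4,3/4]$, so $F(v)\ge\int_{1/4}^{3/4}(v-\phi)\,dx\ge \tfrac12\bigl(v-4(p+q)\bigr)$ for every $v$. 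With that replacement, your proof is complete.
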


\begin{proof}[Proof of Proposition~\ref{prop:F-root}]
The first claim is immediate upon noticing that $\phi$ is convex on its effective
domain, so that all of its strict sublevel sets are possibly empty intervals.

Towards the second claim, we begin with existence of $v_*$ via the intermediate
value theorem. Clearly $F$ is non-decreasing by construction. Moreover, $F$ is
$1$-Lipschitz. Indeed, for all $v,w\in\RR$, we have
\[
    \ba{(v-\phi(x))_+-(w-\phi(x))_+}\le \ba{v-w},
\]
and therefore $\ba{F(v)-F(w)}\le \ba{v-w}$. In particular, $F$ is continuous.

So, define $m:=\inf_{x\in[0,1]}\phi(x)$, and note that $m<\infty$; this holds, for example, because $\phi(1/2)<\infty$. Hence Lebesgue's dominated convergence theorem gives $F(v)\to 0$ as $v \searrow m$. On the other hand, $\phi$ is bounded on some
non-empty interval of positive Lebesgue measure; for instance, on any compact
subinterval of $(0,1)$. Hence $F(v)\to\infty$ as $v\to\infty$. The intermediate
value theorem therefore implies that $v_*$ exists.

Next, we prove uniqueness of $v_*$. For any $v_2>v_1$ with $F(v_1)>0$, it follows
that $I_{v_1}$ has positive Lebesgue measure. We then have
\[
    F(v_2)-F(v_1)
    \ge
    (v_2-v_1)\ba{I_{v_1}}>0.
\]
This implies $F$ is strictly increasing on the region where it is positive, and hence the equation $F(v)=1$ is attained by at most one value of $v$. Uniqueness is established.

Finally, we check that $(a_*,b_*)$ is the global minimizer of $G_{p,q}$. Since \(I_{v_*}\) is equal to \((a_*,b_*)\), up to possibly the inclusion of endpoints, the identity \(F(v_*)=1\) gives \[ 1+\int_{a_*}^{b_*}(\phi(x)-v_*)\,dx=0\,, \qquad \text{ hence } \qquad G_{p,q}(a_*,b_*) = \frac{1+\int_{a_*}^{b_*}\phi(x)\,dx}{b_*-a_*} = v_*\, . \]

Now let $0\le a<b\le1$ be arbitrary. If $\int_a^b\phi(x)\,dx=+\infty$, then
$G_{p,q}(a,b)=+\infty$ and there is nothing to prove. Otherwise, we compute:
\[
    \int_a^b(v_*-\phi(x))\,dx
    \le
    \int_0^1(v_*-\phi(x))_+\,dx
    =
    F(v_*)
    =
    1\,,
\]
from which we deduce
\[
    G_{p,q}(a,b)-v_*
    =
    \frac{
        1+\int_a^b(\phi(x)-v_*)\,dx
    }{b-a}
    =
    \frac{
        1-\int_a^b(v_*-\phi(x))\,dx
    }{b-a} \ge 0\,.
\]
In total,
\[
    G_{p,q}(a,b)\ge v_*=G_{p,q}(a_*,b_*).
\]
Since $0\le a<b\le1$ was arbitrary, the proof is complete.
\end{proof}

\subsection{Recursive equation and proof of \eqref{eq:fixed-prob-LB}}

For a finite rooted binary (maximum degree at most two) tree $S$, define $b(S)$ recursively by
\begin{equation}\label{eq:recursive-value-b}
    b(\emptyset):=0,
    \qquad
    b(S):=G\big(b(S_L),b(S_R)\big),
    \qquad S\neq\emptyset.
\end{equation}
Note that this uniquely defines $b(S)$, since $S_L$ and $S_R$ have fewer vertices than
$S$.

\begin{proposition}[Recursive lower bound]
\label{prop:recursive-value-lower-bound}
Let $\pi\in\Pi_k$ be non-empty. For every
$\alg\in\mathrm{OA}(\pi)$,
\[
    \E{T_\alg}\ge b(\tree_\pi).
\]
In particular, $\beta(\pi)\ge b(\tree_\pi)$. Moreover, for every integer $n\ge0$,
\begin{equation}\label{eq:finite-horizon-lower-tail}
    \PP[\stream]{T_\alg\le n}
    \le
    \frac{n}{b(\tree_\pi)}.
\end{equation}
\end{proposition}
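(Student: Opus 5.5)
The plan is the potential-function/submartingale argument sketched in the overview. For a valid partial embedding ${\bf x}\in(0,1)^j$ of the $j$-prefix of $\pi$, I delete the first $j$ labeled vertices from $\tree_\pi$. Because the labels increase along root-to-leaf paths of a binary search tree, the remaining vertices form a disjoint collection $\calC({\bf x})$ of fringe subtrees. Each $S\in\calC({\bf x})$ hangs off an embedded vertex, and its values are forced into an open interval $I_S({\bf x})$ whose endpoints are the two embedded values adjacent (in the BST order) to the slot of $S$. I then set
\[
V({\bf x})=\sum_{S\in\calC({\bf x})} \frac{b(S)}{|I_S({\bf x})|},
\]
with $V(\emptyset)=b(\tree_\pi)$ and $V=0$ once the full embedding is complete.

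The key step is the one-step identity \eqref{eq:ov-optimality}, which I read in the form
\[
\int_0^1 \big(V({\bf x})-V({\bf x}\oplus z)\big)_+\,dz = 1
\quad\text{for every valid } {\bf x} \text{ with } j<k,
\]
where ${\bf x}\oplus z$ appends the value $z$ for the next label $j+1$. I take $V({\bf x}\oplus z)=+\infty$ when $z$ is not a valid next value, so such $z$ contribute nothing. The next label $j+1$ is the root of a unique component $S\in\calC({\bf x})$, and only $S$ changes: with $I=I_S({\bf x})$, $d=|I|$ and $z=\inf I+dx$, it is replaced by $S_L$ and $S_R$ on subintervals of lengths $dx$ and $d(1-x)$. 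Hence
\[
V({\bf x})-V({\bf x}\oplus z)=\frac{1}{d}\big(b(S)-\phi_{p,q}(x)\big), \qquad p=b(S_L),\ q=b(S_R).
\]
For $z\notin I$ the difference is $-\infty$, which contributes nothing to the positive part. Changing variables $dz=d\,dx$ turns the integral into $\int_0^1(b(S)-\phi_{p,q}(x))_+\,dx=F(b(S))$. By \cref{prop:F-root}, $b(S)=G(p,q)=v_*$ and $F(v_*)=1$. The whole point is that the $1/|I|$ scaling makes the identity exactly scale invariant, so the claim reduces to the proposition.

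Next, for an arbitrary $\alg\in\mathrm{OA}(\pi)$ (possibly randomized), let ${\bf X}_t$ be the prefix selected by time $t$, and define $W_t=\min\{V({\bf X}_t),b(\tree_\pi)\}$ and $M_t=t+W_t$. I show $\E{M_{t+1}-M_t\mid\mathcal F_t}\ge 0$. At step $t+1$, $X_{t+1}$ is uniform and independent of $\mathcal F_t$ and of the seed. The algorithm either skips, so $W$ is unchanged, or accepts on some measurable set $A$ of $z$-values. Since $W_t\le b(\tree_\pi)$, the truncation gives $W_t-W_{t+1}\le (V({\bf X}_t)-V({\bf X}_t\oplus X_{t+1}))_+$, and this also holds when the truncation is active. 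The expected decrease of $W$ is therefore at most $\int_A(\cdots)_+\le 1$, so $M$ is a submartingale.

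The truncation keeps $0\le W_t\le b(\tree_\pi)$ bounded. This matters because a valid algorithm accepts only valid $z$ almost surely, but $V$ can be huge at intermediate steps. Optional stopping at the bounded time $T_\alg\wedge n$ gives
\[
\E{T_\alg\wedge n}+\E{W_{T_\alg\wedge n}}\ge b(\tree_\pi).
\]
Letting $n\to\infty$, using $W\le b(\tree_\pi)$, $W_{T_\alg}=0$ and monotone/dominated convergence (assuming $\E{T_\alg}<\infty$, the case $\E{T_\alg}=\infty$ being trivial) yields $\E{T_\alg}\ge b(\tree_\pi)$, and hence $\beta(\pi)\ge b(\tree_\pi)$.

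For \eqref{eq:finite-horizon-lower-tail}, I use the same inequality at the deterministic time $n$: $n\ge \E{T_\alg\wedge n}\ge b(\tree_\pi)-\E{W_{T_\alg\wedge n}}$. On the event $\{T_\alg\le n\}$ we have $W=0$, and elsewhere $W\le b(\tree_\pi)$. So $\E{W_{T_\alg\wedge n}}\le b(\tree_\pi)\,\PP{T_\alg>n}$, which rearranges to $n\ge b(\tree_\pi)\,\PP{T_\alg\le n}$.

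The main obstacle is the bookkeeping behind the one-step identity. I expect to have to check carefully that the components and intervals $I_S({\bf x})$ behave as claimed: that appending $z$ touches only the component containing label $j+1$, and that the boundary conventions for empty subtrees and endpoints match those of $\phi_{p,q}$. I also need to justify that the truncated increment bound holds pointwise, including on null sets and when $V$ is infinite.
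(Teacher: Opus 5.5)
Your proposal is correct and follows the paper's proof essentially step for step. It uses the same potential $V$, the same scale-invariant one-step identity reduced to $F(b(S))=1$ via Proposition~\ref{prop:F-root}, and the same truncated submartingale $M_t=t+\min\{V(\mathbf{X}_t),b(\tree_\pi)\}$. The only cosmetic difference is that you apply optional stopping at the bounded times $T_\alg\wedge n$, pass to the limit, and read off the finite-horizon bound from the stopped process, whereas the paper uses $\E{W_n}\ge b(\tree_\pi)-n$ directly together with optional stopping under integrability of $T_\alg$.
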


\begin{proof}
First, we note that by \eqref{eq:recursive-value-b} and Proposition~\ref{prop:F-root}, every
non-empty finite binary tree $S$ satisfies
\begin{equation}\label{eq:recursive-value-calibration}
    \int_0^1
    \big(
        b(S)-\phi_{b(S_L),b(S_R)}(y)
    \big)_+\,dy
    =1.
\end{equation}

Set $T:=\tree_\pi$.
For $0\le j\le k$, let
\[
    \mathcal H_j
    :=
    \left\{
        {\bf x}=(x_1,\ldots,x_j)\in(0,1)^j:
        {\bf x}\simeq(\pi(1),\ldots,\pi(j))
    \right\}
\]
denote the set of valid $j$-prefix embeddings. Fix ${\bf x}\in\mathcal H_j$.
Delete from $T$ all vertices with labels at most $j$, and let
$\calC({\bf x})$ be the collection of remaining connected components. Each
$S\in\calC({\bf x})$ is a fringe subtree, rooted at its smallest-label vertex
$r(S)$. The interval in which $S$ must be embedded is
\begin{equation}\label{eq: interval IS definition}
    I_S({\bf x})
    :=
    \left(
        \max\big(\{0\}\cup\{x_i:i\le j,\ \pi(i)<\pi(r(S))\}\big),
        \min\big(\{1\}\cup\{x_i:i\le j,\ \pi(i)>\pi(r(S))\}\big)
    \right).
\end{equation}
Define the potential of $\bf x$
\begin{equation}\label{eq:recursive-value-potential}
    V({\bf x})
    :=
    \sum_{S\in\calC({\bf x})}
    \frac{b(S)}{|I_S({\bf x})|},
\end{equation}
with the convention that the empty sum is zero. In particular,
\[
    V(\varnothing)=b(T),
    \qquad
    V({\bf x})=0\quad\text{for }{\bf x}\in\mathcal H_k.
\]

For $j<k$ and any $z\in(0,1)$, write ${\bf x}\oplus z=(x_1,\ldots,x_j,z)$, and set
$V({\bf x}\oplus z)=+\infty$ when
${\bf x}\oplus z\notin\mathcal H_{j+1}$. We claim that, for every
${\bf x}\in\mathcal H_j$,
\begin{equation}\label{eq:recursive-value-history-certificate}
    \int_0^1
    \big(V({\bf x})-V({\bf x}\oplus z)\big)_+\,dz
    =1.
\end{equation}
Indeed, the next label $j+1$ is the root of one component
$S\in\calC({\bf x})$. Write $I_S({\bf x})=(\ell,u)$. For
$z\in(\ell,u)$, selecting $z$ replaces $S$ by its left and right subtrees,
in the intervals $(\ell,z)$ and $(z,u)$, respectively. Hence
\[
    V({\bf x}\oplus z)
    =
    V({\bf x})
    -\frac{b(S)}{u-\ell}
    +\frac{b(S_L)}{z-\ell}
    +\frac{b(S_R)}{u-z},
\]
where terms corresponding to empty subtrees should be omitted. For
$z\notin(\ell,u)$, the positive part in
\eqref{eq:recursive-value-history-certificate} is zero. Therefore, after the
change of variables $y=(z-\ell)/(u-\ell)$,
\begin{align*}
    \int_0^1
    \big(V({\bf x})-V({\bf x}\oplus z)\big)_+\,dz
    &=
    \int_0^1
    \big(
        b(S)-\phi_{b(S_L),b(S_R)}(y)
    \big)_+\,dy
    =1,
\end{align*}
where the last equality is \eqref{eq:recursive-value-calibration}.

Write $\alg=(\tau_1,\ldots,\tau_k)$, where $\tau_j$ is the time of embedding $\pi(j)$, and set $\tau_0:=0$. For $t\ge0$, define
\[
    J_t:=\max\{0\le j\le k:\tau_j\le t\},
    \qquad
    {\bf X}_t:=(X_{\tau_1},\ldots,X_{\tau_{J_t}}),
\]
with ${\bf X}_t=\varnothing$ when $J_t=0$. Also set
\[
    W_t:=\min\{V({\bf X}_t),b(T)\},
    \qquad
    M_t:=t+W_t.
\]
We claim that $(M_t)_{t\ge0}$ is a submartingale with respect to
$(\calF_t)_{t\ge0}$, where $\calF_0:=\sigma(X_0)$. Fix $t$ and condition on
$\calF_t$. If $J_t=k$, then $W_t=W_{t+1}=0$. Otherwise, let $j:=J_t$ and write
${\bf X}_t={\bf x}\in\mathcal H_j$ and $v:=V({\bf x})$. Conditional on
$\calF_t$, the decision whether to accept $X_{t+1}$ is a measurable function
of $X_{t+1}$. If the value $z$ is rejected, the potential remains $v$; if it
is accepted, the new potential is $V({\bf x}\oplus z)$. Since, for every
$w\in[0,\infty]$,
\[
    \min\{v,b(T)\}-\min\{w,b(T)\}
    \le
    (v-w)_+,
\]
we obtain from \eqref{eq:recursive-value-history-certificate} that
\[
    \E{W_t-W_{t+1}\mid\calF_t}
    \le
    \int_0^1
    \big(V({\bf x})-V({\bf x}\oplus z)\big)_+\,dz
    =1.
\]
Thus $\E{M_{t+1}\mid\calF_t}\ge M_t$, as claimed. In particular,
$\E{W_n}\ge b(T)-n$. On the event $\{T_\alg\le n\}$, the embedding is complete
by time $n$, so $W_n=0$. Since $0\le W_n\le b(T)$,
\[
    b(T)-n
    \le
    \E{W_n}
    \le
    b(T)\,\PP[\stream]{T_\alg>n},
\]
which proves \eqref{eq:finite-horizon-lower-tail}.

Finally, we may assume $T_{\alg}$ is integrable, else
$\E{T_\alg}\ge b(T)$ holds trivially. Moreover, $0\le W_t\le b(T)$ holds almost
surely by construction. So, applying optional stopping gives
\[
    b(T)=M_0
    \le
    \E{M_{T_\alg}}
    =
    \E{T_\alg}
    +
    \E{W_{T_\alg}}.
\]
As $W_{T_\alg}=0$ almost surely, rearranging yields
\[
    \E{T_\alg}\ge b(T)=b(\tree_\pi).
\]
Since $\alg\in\mathrm{OA}(\pi)$ was arbitrary, the result follows.
\end{proof}

\begin{lemma}[Stopping-time recurrence, upper bound]
\label{lemma:fix-rec-UB}
Let $\pi$ be a non-empty permutation of $[k]$. Let $\tau$ be any stopping time defined on the collection of variables $X_0,X_1,\ldots$ with $\EE\,\tau<\infty$, and let
\[
    \alg_L=(\tau_j^L)_{j=1}^{\pi(1)-1},
    \qquad
    \alg_R=(\tau_j^R)_{j=1}^{k-\pi(1)}
\]
be randomized online embedding algorithms for $L(\pi)$ and $R(\pi)$, respectively, with finite expected runtime. There is a randomized online embedding algorithm $\alg=(\tau_j)_{j=1}^k$ for $\pi$ such that
\[
    \EE\tau_k
    =
    \EE\,\tau
    +
    \EE\,\Big(\frac{1}{X_\tau}\Big)\EE\,T_{\alg_L}
    +
    \EE\,\Big(\frac{1}{1-X_\tau}\Big)\EE\,T_{\alg_R}.
\]
If one of $L(\pi)$ and $R(\pi)$ is empty, the corresponding algorithm and term are omitted. In particular, for every subtree $S$ of $\tree_\pi$, there is a recursive interval algorithm $\alg_S^*$ with
\[
    \E{T_{\alg_S^*}}=b(S).
\]
Consequently, $\beta(\pi)\le b(\tree_\pi)$ for every permutation $\pi$.
\end{lemma}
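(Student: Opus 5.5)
The plan is to construct $\alg$ by thinning, and then to obtain the ``in particular'' statements by induction on the size of the subtree.

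\textbf{Construction.} Set $\tau_1:=\tau$ and $x:=X_\tau$; almost surely $x\in(0,1)$. For $t>\tau$, call $X_t$ a lower arrival if $X_t<x$ and an upper arrival if $X_t>x$.

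\begin{itemize}
\item Let $\sigma^L_1<\sigma^L_2<\cdots$ be the successive times of lower arrivals after $\tau$. The rescaled values $Y^L_i:=X_{\sigma^L_i}/x$ form, conditionally on $\mathcal F_\tau$, an iid uniform stream. This follows from the strong Markov property of the iid stream at the stopping time $\tau$.
\item Define $Y^R_i:=(X_{\sigma^R_i}-x)/(1-x)$ symmetrically from the upper arrivals. Conditionally on $\mathcal F_\tau$, the streams $Y^L$ and $Y^R$ are independent.
\item Run $\alg_L$ on $Y^L$ and $\alg_R$ on $Y^R$. Each uses fresh seeds extracted from $X_0$, which is possible because $\mathcal X_0$ encodes countably many independent auxiliaries.
\item Embed the $j$-th element of $\pi$ at the real time corresponding to the step at which the appropriate subalgorithm accepts. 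If $\pi(j)<\pi(1)$, this is the lower subalgorithm's acceptance of the corresponding element of $L(\pi)$; otherwise it is the upper one.
\end{itemize}

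Each $\tau_j$ is then a stopping time for $(\mathcal F_t)$. The order constraint holds because the rescalings are increasing maps of $(0,x)$ into $(0,1)$ and of $(x,1)$ into $(0,1)$, and every lower value lies below $x$, which lies below every upper value.

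One point needs care: the embedding times must be increasing in $j$, since the relative order of $L$- and $R$-elements in $\pi$ is interlaced. I would handle this by replacing $\tau_j$ with the running maximum $\max_{i\le j}$ of the raw acceptance times, plus one at each tie. That does not work as stated, because the value at the delayed time would no longer be the accepted one.

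The better resolution is to note that an online algorithm for $L(\pi)$ embeds its elements in order. So the $L$-subalgorithm processes the elements of $\pi$ below $\pi(1)$ in the order they appear in $\pi$, and similarly for $R$. The trouble is that we need the $L$-element $\pi(j)$ to be embedded before the $R$-element $\pi(j+1)$ whenever $j<j+1$. I would fix this by sequentializing: run the lower and upper streams in parallel, but let a subalgorithm be \emph{active} only when the next element of $\pi$ to be embedded belongs to it. Arrivals in the inactive half are discarded.

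Under this scheme each subalgorithm still sees a fresh uniform stream of its own values, namely its active-period arrivals. The total time is the sum over active periods. While $L$ is active, each step is a lower arrival with probability $x$, so by Wald's identity the expected real time consumed by $L$, conditional on $x$, is $\E{T_{\alg_L}}/x$. Likewise the time consumed by $R$ is $\E{T_{\alg_R}}/(1-x)$. Adding $\EE\tau$ and taking expectations over $x$ yields the displayed formula. Justifying the fresh-stream claim under this activity switching, a conditional-independence statement for thinned iid streams with predictable switching, together with the Wald step, is the main technical obstacle. I would write it via the deterministic Borel representation $g^{(\ell)}_j$ of the stopping times.

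\textbf{Recursive algorithm.} Induct on $|S|$. Take $\tau$ to be the hitting time of $[a_*,b_*]$, the optimizer from Proposition~\ref{prop:F-root} for $p=b(S_L)$ and $q=b(S_R)$, and take $\alg_{S_L}^*$ and $\alg_{S_R}^*$ from the induction hypothesis. Then $\EE\tau=1/(b_*-a_*)$, and $X_\tau$ is uniform on $[a_*,b_*]$. Hence
\[
\EE T=\frac{1+\int_{a_*}^{b_*}\phi_{p,q}}{b_*-a_*}=G(p,q)=b(S).
\]
Finiteness of the integral follows since $v_*<\infty$. Taking $S=\tree_\pi$ gives $\beta(\pi)\le b(\tree_\pi)$.
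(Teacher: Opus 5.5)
Your proposal is correct and follows the paper's proof. The paper's construction also advances the $L$-subalgorithm only while the next element of $\pi$ lies below $\pi(1)$: it applies the Borel decision functions $g_m$ to the seed plus the rescaled values $X_j/X_{\tau_1}$ for $j\in\Omega_L\cap[\tau_i+1,t]$, and discards upper arrivals in that window, which is exactly your activity switching. It then obtains $\EE[\tau_{i+1}-\tau_i]=\EE(1/X_{\tau_1})\,\EE[\tau^L_m-\tau^L_{m-1}]$ by Wald's identity with geometric($X_{\tau_1}$) gaps, and the recursive part (hitting time of $[a_*,b_*]$ from Proposition~\ref{prop:F-root}, induction on the subtree) is the same as yours.
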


\begin{proof}
We may assume that the extra randomness $X_0$ is a countable collection of iid variables, so we can write
$
    X_0=Y_0\sqcup Y_L\sqcup Y_R,
$
where $\{Y_0,Y_L,Y_R\}$ are independent and distributed the same as $X_0$. We define
$
    \tau_1=\tau(Y_0,X_1,X_2,\ldots)
$
and will inductively define $\tau_2,\ldots,\tau_k$. We work with the random sets
\[
    \Omega_L=\{i>\tau_1:X_i<X_{\tau_1}\},
    \qquad
    \Omega_R=\{i>\tau_1:X_i>X_{\tau_1}\},
\]
noting that $[\tau_1]\cup\Omega_L\cup\Omega_R=\NN$ with probability one.

Suppose that $\tau_1,\ldots,\tau_i$ have been defined for some $i<k$, and assume that $\pi(i+1)<\pi(1)$; if not, the discussion is the same with $L$ replaced by $R$ and $X_{\tau_1}$ replaced by $1-X_{\tau_1}$. Moreover, suppose that up to this point we have constructed two independent lists $\cS_i^L$ and $\cS_i^R$ which are of finite length almost surely; to start,
we let
\[
    \cS_1^L=(Y_L),
    \qquad
    \cS_1^R=(Y_R).
\]
Let $m$ be the index in $L(\pi)$ corresponding to $\pi(i+1)$, and let $g_m$ be the sequence of Borel decision functions defining $\tau_m^L$, as in \cref{sec:ov-notation}. Letting `$[\cdot,\cdot]$' denote concatenation of two sequences, we define
\[
    \tau_{i+1}
    =
    \min\left\{
        t:
        g_m\left(
            \left[
                \cS_i^L,
                (X_j/X_{\tau_1})_
                {j\in\Omega_L\cap[\tau_i+1,t]}
            \right]
        \right)=1
    \right\},
\]
and we set
\[
    \cS_{i+1}^L
    =
    \left[
        \cS_i^L,
        (X_j/X_{\tau_1})_
        {j\in\Omega_L\cap[\tau_i+1,\tau_{i+1}]}
    \right],
    \qquad
    \cS_{i+1}^R=\cS_i^R.
\]
Finally, once $\tau_k$, $\cS_k^L$, and $\cS_k^R$ have been defined, we let
\[
    \cS^L
    =
    \left[
        \cS_k^L,
        (X_j/X_{\tau_1})_
        {j\in\Omega_L\cap[\tau_k+1,\infty)}
    \right]
\]
and
\[
    \cS^R
    =
    \left[
        \cS_k^R,
        \left(
            \frac{X_j-X_{\tau_1}}{1-X_{\tau_1}}
        \right)_
        {j\in\Omega_R\cap[\tau_k+1,\infty)}
    \right].
\]
It is clear by construction that, for each $i\in[k]$, $\tau_i$ is a stopping time for the sequence $X_0,X_1,\ldots$, so it remains to understand
$\EE\tau_k$. Set $\tau_0^L=\tau_0^R:=0$. Since
\[
    \tau_k=\tau_1+\sum_{i=1}^{k-1}(\tau_{i+1}-\tau_i),
\]
it suffices by linearity of expectation to understand $\EE\tau_1$ and
$\EE(\tau_{i+1}-\tau_i)$. By construction, $\EE\tau_1=\EE\tau$. Moreover,
fixing $i$ as above and recalling that $m$ is the index in $L(\pi)$
corresponding to $\pi(i+1)$, we have
\[
    \tau_{i+1}-\tau_i
    =
    |\Omega_L\cap[\tau_i+1,\tau_{i+1}]|
    +
    |\Omega_R\cap[\tau_i+1,\tau_{i+1}]|
\]
and
\[
    |\Omega_L\cap[\tau_i+1,\tau_{i+1}]|
    =
    \tau_m^L-\tau_{m-1}^L.
\]
Let $Z_1,Z_2,\ldots$ be conditionally independent geometric variables with
random parameter $X_{\tau_1}$. Then
\[
    |\Omega_R\cap[\tau_i+1,\tau_{i+1}]|
    =
    \sum_{j=1}^{\tau_m^L-\tau_{m-1}^L}(Z_j-1).
\]
Thus, Wald's identity, together with the independence of
$\tau_m^L-\tau_{m-1}^L$ from $X_{\tau_1}$, gives
\[
    \EE|\Omega_R\cap[\tau_i+1,\tau_{i+1}]|
    =
    \EE[\tau_m^L-\tau_{m-1}^L]\EE[Z_1-1].
\]
Consequently,
\begin{align*}
    \EE[\tau_{i+1}-\tau_i]
    =
    \EE[\tau_m^L-\tau_{m-1}^L]
    +
    \EE|\Omega_R\cap[\tau_i+1,\tau_{i+1}]| &=
    \EE[\tau_m^L-\tau_{m-1}^L]\EE Z_1 \\
    &=
    \EE\left(\frac{1}{X_{\tau_1}}\right)
    \EE[\tau_m^L-\tau_{m-1}^L].
\end{align*}
It follows that
\[
    \sum_{i:\,\pi(i+1)<\pi(1)}
    \E{\tau_{i+1}-\tau_i}
    =
    \EE\left(\frac{1}{X_{\tau_1}}\right)
    \E{T_{\alg_L}}.
\]
Similarly,
\[
    \sum_{i:\,\pi(i+1)>\pi(1)}
    \EE[\tau_{i+1}-\tau_i]
    =
    \EE\left(\frac{1}{1-X_{\tau_1}}\right)
    \EE\,T_{\alg_R}.
\]
Since $(\tau_1,X_{\tau_1})$ has the same distribution as
$(\tau,X_\tau)$, we conclude that
\[
    \EE\tau_k
    =
    \EE\tau
    +
    \EE\left(\frac{1}{X_\tau}\right)\EE\,T_{\alg_L}
    +
    \EE\left(\frac{1}{1-X_\tau}\right)\EE\,T_{\alg_R}.
\]

It remains to prove the assertion involving $b$. We proceed by induction on
the size of a permutation $\sigma$, writing $S:=\tree_\sigma$. The empty
permutation is trivial. Suppose that $\sigma\neq\emptyset$, and let
\[
    p:=b(S_L),
    \qquad
    q:=b(S_R).
\]
Let $(a_*,b_*)$ attain the minimum defining $G(p,q)$, whose existence follows
from Proposition~\ref{prop:F-root}, and let
\[
    \tau_*:=\min\{t\ge1:X_t\in[a_*,b_*]\}.
\]
By the induction hypothesis, there are recursive interval algorithms for the
left and right subpermutations with expected runtimes $p$ and $q$,
respectively. Applying the construction above with these two algorithms and
the stopping time $\tau_*$ gives a recursive interval algorithm $\alg_S^*$ satisfying
\begin{align*}
    \E{T_{\alg_S^*}}
    &=
    \frac{1}{b_*-a_*}
    +
    \frac{1}{b_*-a_*}
    \int_{a_*}^{b_*}\phi_{p,q}(x)\,dx =
    G_{p,q}(a_*,b_*)
    = G(p,q) = b(S).
\end{align*}
Here and above, the corresponding term is omitted when a child is empty. This completes the induction and proves $\beta(\pi)\le b(\tree_\pi)$.
\end{proof}

\begin{proof}[Proof of Theorem~\ref{thm:recursion-formula}]
For non-empty $\pi$, Proposition~\ref{prop:recursive-value-lower-bound} and
Lemma~\ref{lemma:fix-rec-UB} give
\[
    \beta(\pi)=b(\tree_\pi).
\]
The same identity is immediate for $\pi=\emptyset$. Therefore, if
$\pi\neq\emptyset$,
\[
    \beta(\pi)
    =
    b(\tree_\pi)
    =
    G\big(b(\tree_L),b(\tree_R)\big)
    =
    G\big(\beta(L(\pi)),\beta(R(\pi))\big),
\]
as required.
\end{proof}

The finite-horizon estimate \eqref{eq:finite-horizon-lower-tail}, together
with $\beta(\pi)=b(\tree_\pi)$, also proves \eqref{eq:fixed-prob-LB}. Indeed,
with $n:=\lfloor(1-\delta)\beta(\pi)\rfloor$,
\[
    \PP[\stream]{T_\alg\le(1-\delta)\beta(\pi)}
    =
    \PP[\stream]{T_\alg\le n}
    \le
    \frac{n}{\beta(\pi)}
    \le
    1-\delta.
\]

\subsection{The optimal embedding algorithm}
The algorithm below is a pseudocode description of the optimal online embedding algorithm, henceforth called the \emph{canonical algorithm}. The algorithm uses subroutines $a_*(T)$ and $b_*(T)$ that take in a binary tree and output $a$ and $b$ corresponding to the interval $(a,b)$ achieving the optimum in \eqref{eq:rec fixed} for the permutation corresponding to $T$. A binary tree $T$ is a list $(v_1,\dots, v_k)$ where each $v \in T$ has data $v.\ell$, $v.u$ and $v.y$ (which will be given values as the algorithm runs), corresponding respectively to the  lower and upper bounds of the acceptance interval and the embedded value of $v$, as well as pointers $v.L$ and $v.R$ to its left and right children.
\begin{algorithm}[h]
\caption{Canonical algorithm for embedding general permutations}
\label{alg:canonical-general2}
\begin{algorithmic}[1]

\State Initialize $t \gets 0$, $(v_1,\dots,v_k) \gets \tree_\pi$, and $(v_1.\ell,v_1.u) \gets (0,1)$
\For{$i = 1:k$}
    \State $\tree_i \gets$ fringe sub-tree rooted at $i$
    \State $I \gets  \big(
            v_i.\ell+(v_i.u-v_i.\ell)a_*(\tree_i),\,
            v_i.\ell+(v_i.u-v_i.\ell)b_*(\tree_i)
        \big)$ 
    \Repeat
        \State \(t\gets t+1\), and reveal \(X_t\).
    \Until{\(X_t\in I\)}
    \State Set \(\tau_i\gets t\) and \(v_i.y \gets X_t\). \\
           
    \State $L,R \gets$ left and right sub-trees rooted at $i$ 
    \State $(v_L,v_R) \gets $ references to left and right child of $v_i$
    \State $(v_L.\ell, v_L.u) \gets (v_{i}.\ell,\,v_i.y)$
    \State $(v_R.\ell, v_R.u) \gets (v_i.y,\, v_i.r)$
\EndFor
\end{algorithmic}
\end{algorithm}

\section{Probability bounds}\label{sec:prob}

\subsection{Fixed patterns}
The lower-tail estimate \eqref{eq:fixed-prob-LB} for arbitrary online algorithms was proved at the end of the
preceding section. We now turn to the concentration estimate
\eqref{eq:fixed-prob-UB} for the canonical algorithm.
For a finite rooted binary tree $\tree$, let $\alg^*_\tree$ denote the
canonical embedding algorithm for $\tree$, set
$T^*_\tree:=T_{\alg^*_\tree}$, and define
\[
    v(\tree):=\pVar{T^*_\tree}.
\]

\begin{proposition}[Variance bound for the canonical algorithm]\label{prop:variance-canonical}
There is a finite universal constant $C$ such that the following holds. Fix any $k <\infty$, let $S$ be a fringe subtree of $\tree_\pi$ for some $k$-permutation $\pi$, and let $\alg_S^*$ be the canonical recursive algorithm for embedding $S$. Then,
\[
\EE T_{\alg_S^*}=\beta(S)
\qquad\text{and}\qquad
\pVar{T_{\alg_S^*}}\le C\,\beta(S)^{3/2}.
\]
Consequently,
\[
\PP[\stream]{\ba{T_{\alg_S^*}-\beta(S)} > y\,\beta(S)^{3/4}} \le \frac{C}{y^2},
\qquad y\ge 1.
\]
\end{proposition}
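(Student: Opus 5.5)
We argue by induction on $|S|$ via a variance recursion obtained from the law of total variance. The expectation identity $\EE T_{\alg_S^*}=\beta(S)$ is already contained in Lemma~\ref{lemma:fix-rec-UB} together with Theorem~\ref{thm:recursion-formula}. The tail bound follows from the variance bound by Chebyshev's inequality, since $y^2\beta(S)^{3/2}$ is the square of the deviation threshold. So the whole task is the variance estimate.

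\textbf{Variance recursion.} Write $p=\beta(S_L)$, $q=\beta(S_R)$, $G=\beta(S)=G(p,q)$, and let $(a_*,b_*)$ be the optimizer from Proposition~\ref{prop:F-root}, with $w=b_*-a_*$. For the canonical algorithm, $\tau_1$ is geometric with parameter $w$, and $x:=X_{\tau_1}$ is uniform on $[a_*,b_*]$ and independent of $\tau_1$. Conditional on $x$, the construction in Lemma~\ref{lemma:fix-rec-UB} gives
\[
T_S=\tau_1+\sum_{j=1}^{T_L}Z_j+\sum_{j=1}^{T_R}Z'_j .
\]
Here $T_L,T_R$ are independent copies of the canonical run-times for $S_L,S_R$, independent of $x$, and $Z_j\sim\mathrm{Geom}(x)$, $Z'_j\sim\mathrm{Geom}(1-x)$. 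Conditioning on $x$ and on $(T_L,T_R)$, and using $\pVar{\mathrm{Geom}(x)}\le 1/x^2$, we get
\[
v(S)\le \frac1{w^2}+\mathrm{Var}_x\!\big(\phi_{p,q}(x)\big)+\EE\big[x^{-2}\big]\big(v(S_L)+p\big)+\EE\big[(1-x)^{-2}\big]\big(v(S_R)+q\big).
\]

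\textbf{Using stationarity.} By Proposition~\ref{prop:F-root}, $\phi_{p,q}<G$ on $(a_*,b_*)$, with equality at interior endpoints, and $\int_{a_*}^{b_*}(G-\phi_{p,q})=1$. From this I would extract three estimates, each to be checked by a direct calculus computation with $\phi(x)=p/x+q/(1-x)$.
\begin{enumerate}
\item The width satisfies $w\gtrsim G^{-1/2}$. The reason is that $G-\phi$ behaves quadratically in distance from the minimizer of $\phi$, on scale set by $\phi''$; the identity case, where $w\asymp 1/k\asymp G^{-1/2}$, is the model. This gives $1/w^2=O(G)$.
\item The bounds $\mathrm{Var}_x(\phi)\le \EE(G-\phi)^2\le G\,\EE(G-\phi)=G/w\cdot w=O(G)$, and $p\,\EE x^{-2}+q\,\EE(1-x)^{-2}\le G\cdot\max(\EE x^{-1},\EE(1-x)^{-1})=O(G)$. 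For the second, I would check, possibly after a crude case split on whether $a_*$ is near $0$, that $\EE x^{-1}$ and $\EE(1-x)^{-1}$ are $O(1)$ whenever the corresponding child is nonempty.
\item The key \emph{self-balancing inequality}
\[
\EE\big[x^{-2}\big]p^{3/2}+\EE\big[(1-x)^{-2}\big]q^{3/2}\le G^{3/2}-c_0\,G
\]
for a universal $c_0>0$.
\end{enumerate}
Given these three estimates, the induction hypothesis $v(S_L)\le Cp^{3/2}$, $v(S_R)\le Cq^{3/2}$ yields $v(S)\le CG^{3/2}-Cc_0G+O(G)$. This is at most $CG^{3/2}$ once $C$ is large. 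Small trees, where $G$ is bounded and all quantities are explicitly bounded, serve as the base case.

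\textbf{Main obstacle.} The hard step is the self-balancing inequality. My first attempt would use $p/x\le G$ and $q/(1-x)\le G$ on the interval, writing
\[
p^{3/2}x^{-2}=(p/x)^{3/2}\cdot(p/x)^{1/2}\cdot p^{-1/2}\cdots
\]
which leads to the desired bound only with the loose constant. So the real input must be the sharper structure $\sqrt G\ge\sqrt p+\sqrt q+\eta$ with $\eta\in[0,1]$, the square-root additivity mentioned in the overview. Concretely, I would solve the first-order conditions $p/a_*+q/(1-a_*)=p/b_*+q/(1-b_*)=G$ asymptotically. This should show that $x\approx\sqrt p/(\sqrt p+\sqrt q)$, up to relative error $O(G^{-1/2})$. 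Then $p^{3/2}x^{-2}+q^{3/2}(1-x)^{-2}\approx(\sqrt p+\sqrt q)^{3}$, which is $\le(\sqrt G-\eta)^3$ and would give the required $-c_0G$ gain. If the gain from $\eta$ is too small in degenerate cases, for instance when $q=0$, I would handle that case separately using the explicit one-sided optimizer, where $x$ is within $O(G^{-1/2})$ of $1$.
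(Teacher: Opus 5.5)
\textbf{Gap: your step (3) is false, so the induction on $v\le C\beta^{3/2}$ cannot close.} For $x$ uniform on $[a_*,b_*]$ you have exactly $\mathbb{E}[x^{-2}]=1/(a_*b_*)$. The endpoints $a_*,b_*$ are the two roots of $p/x+q/(1-x)=G$, i.e.\ of $Gx^2-(G+p-q)x+p=0$, so $a_*b_*=p/G$. Likewise $(1-a_*)(1-b_*)=q/G$, and both identities also hold with $b_*=1$ when $q=0$. Hence the left side of (3) is exactly $G(\sqrt p+\sqrt q)$, and (3) is equivalent to $\eta:=\sqrt G-\sqrt p-\sqrt q\ge c_0$.

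But $\eta$ is not bounded below. Take $p=q\to\infty$. Expanding $\phi_{p,p}(\tfrac12+t)=4p/(1-4t^2)\approx 4p+16pt^2$ and solving $\int(G-\phi)_+=1$ gives $G(p,p)=4p+3^{2/3}p^{1/3}(1+o(1))$, hence $\eta\approx\tfrac{3^{2/3}}{4}p^{-1/6}\to0$. The error terms in your recursion, by contrast, are genuinely of order $G$; already $p\,\mathbb{E}[x^{-2}]=G$. So you would need $C$ of order $1/\eta$, which is unbounded along large balanced trees.

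The degenerate case you single out, $q=0$, is actually harmless, since $\eta\ge\tfrac12$ by \eqref{eq:G2}. The bad regime is two large comparable children. There the acceptance window has width of order $G^{-1/3}$, not $G^{-1/2}$, so your localization heuristic for $x$ is also off.

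\textbf{Missing idea: carry a negative linear term in the induction hypothesis.} Assume $v(S)\le 2C_0\beta(S)^{3/2}-C_0\beta(S)$ and use the exact identities above rather than asymptotics. Because $v(S_L)$ enters with coefficient exactly $G/p$, the term $-C_0p$ becomes $-C_0G$ for each nonempty child.
\begin{itemize}
\item With two children you get $-2C_0G$. One copy absorbs the $O(G)$ error and the other is exactly what the hypothesis demands at the parent, so you only need $\eta\ge0$, i.e.\ \eqref{eq:G1}.
\item With one child, that single copy absorbs the error, and $\sqrt G\ge\sqrt p+\tfrac12$ from \eqref{eq:G2} regenerates the missing $-C_0G$.
\end{itemize}

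\textbf{Your estimate (2) also needs repair.} Since $\mathbb{E}(G-\phi)=1/w$, the product $G\,\mathbb{E}(G-\phi)=G/w$ is only $O(G^{3/2})$. Instead, use that $G-\phi$ is concave and nonnegative on $[a_*,b_*]$ with integral $1$. This gives $\sup(G-\phi)\le 2/w$, hence $\mathbb{E}(G-\phi)^2\le 2/w^2\le 2G$.

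Likewise, the claim $\mathbb{E}x^{-1}=O(1)$ fails. For $p=1$ and $q\to\infty$ the window sits at scale $q^{-1/2}$, so $\mathbb{E}x^{-1}$ is of order $\sqrt q$. You do not need that claim, since $p\,\mathbb{E}x^{-2}=G$ exactly.
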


We will need a second-moment version of \eqref{eq:rec fixed}.

\begin{lemma}\label{lemma:fixed-rec-var}
Let \(\tree\) be a non-empty finite rooted binary tree, and let \(L\) and
\(R\) denote its left and right subtrees. Let \(\tau := \tau_1\) denote the stopping time for the root of $\tree$ under the canonical algorithm. Then for almost every $x\in(0,1)$,
\[
    \EE[T^*_\tree \mid X_\tau=x]
    =
    \EE[\tau\mid X_\tau=x]
    + \frac{\beta(L)}{x}
    + \frac{\beta(R)}{1-x},
\]
and
\[
    \pVar{T^*_\tree\mid X_\tau=x} =
    \pVar{\tau\mid X_\tau=x} 
    + \frac{v(L)}{x^2}
    + \frac{v(R)}{(1-x)^2}
    + \frac{1-x}{x^2}\,\beta(L)
    + \frac{x}{(1-x)^2}\,\beta(R).
\]
\end{lemma}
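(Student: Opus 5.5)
Throughout, fix $x=X_\tau$. The plan is to use the decomposition in the proof of Lemma~\ref{lemma:fix-rec-UB}, applied to the canonical algorithm. There, conditional on $X_{\tau}=x$, the run time splits as
\[
T^*_\tree=\tau+N_L+N_R ,
\]
where $N_L$ (resp.\ $N_R$) is the number of post-$\tau$ arrivals spent while the algorithm is working on the left (resp.\ right) subtree. The canonical algorithm's rule for the root depends only on the hitting of a fixed interval $[a_*,b_*]$. So conditional on $X_\tau=x$, the post-$\tau$ stream is a fresh iid uniform stream independent of $\tau$. The rescaled lower and upper thinned streams are independent iid uniform streams, and the canonical subalgorithms on them have run times $T_L:=T^*_L$ and $T_R:=T^*_R$. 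These are independent of each other and of $(\tau,x)$, with means $\beta(L),\beta(R)$ and variances $v(L),v(R)$.

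The key step is to express $N_L,N_R$ through geometric counts. Each post-$\tau$ arrival is independently ``lower'' with probability $x$. The left subalgorithm finishes after exactly $T_L$ lower arrivals, and the right after $T_R$ upper arrivals. Hence the total number of post-$\tau$ arrivals consumed equals the sum, over the $T_L$ lower steps and the $T_R$ upper steps, of the geometric waiting time of each step.

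One subtlety is that the interleaving of phases means the count of arrivals of the opposite type during a phase is not simply attributed to a single geometric. The cleaner route, mirroring Lemma~\ref{lemma:fix-rec-UB}, is to attribute arrivals by phase. During a left phase, each needed lower arrival costs a $\mathrm{Geom}(x)$ number of total arrivals, and the upper arrivals seen meanwhile are discarded. Symmetrically, each right phase step costs a $\mathrm{Geom}(1-x)$ number of arrivals. Thus
\[
T^*_\tree-\tau=\sum_{j=1}^{T_L}Z_j+\sum_{j=1}^{T_R}Z'_j ,
\]
where $Z_j\sim\mathrm{Geom}(x)$ and $Z'_j\sim\mathrm{Geom}(1-x)$ are iid and independent of $T_L$, $T_R$ and $\tau$. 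To verify the independence claim carefully, I would condition on the subalgorithms' decisions, which are measurable in the thinned streams only. The gaps between consecutive lower arrivals are then iid geometric and independent of the lower values themselves; this is the standard thinning/splitting property of Bernoulli sequences.

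The two identities then follow from the compound-sum (Wald / total variance) formulas. For the mean, $\EE\sum_{j\le N}Z_j=\EE N\,\EE Z$ gives $\beta(L)/x+\beta(R)/(1-x)$. For the variance, a compound sum satisfies $\mathrm{Var}=\EE N\,\mathrm{Var}Z+\mathrm{Var}N\,(\EE Z)^2$. With $\mathrm{Var}\,\mathrm{Geom}(x)=(1-x)/x^2$ this yields
\[
\beta(L)\tfrac{1-x}{x^2}+\tfrac{v(L)}{x^2}
\]
and the symmetric right term. Conditional independence of $\tau$, the left sum and the right sum then lets the variances add, giving the stated formula.

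The main obstacle I expect is rigorously justifying that, conditional on $X_\tau=x$, the gap variables $(Z_j)$ and $(Z'_j)$ are iid and independent of $(T_L,T_R,\tau)$, given the interleaving across phases. I would handle this by a coupling: generate the post-$\tau$ stream from independent lower/upper uniform sequences plus iid $\mathrm{Bernoulli}(x)$ type labels. Within each phase, one records only the count of arrivals until the next arrival of the needed type. These counts are functions of disjoint blocks of labels, selected by stopping rules that depend only on the value sequences. Hence they are iid geometric and independent of the value sequences, which makes the claimed independence transparent. Almost-every $x$ handles conditioning on a density.
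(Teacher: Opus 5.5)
Your proposal is correct and follows essentially the same route as the paper: write $T^*_\tree=\tau+A_L+A_R$ and express $A_L=\sum_{j\le T^*_L}G^{(x)}_j$ as a compound sum of iid geometrics independent of $T^*_L$ (similarly for $A_R$). Then Wald's identity and the law of total variance give each piece, and the variances add because $\tau$, $A_L$, $A_R$ are conditionally independent.

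Your coupling (iid type labels together with separate value sequences for the two subalgorithms) spells out the independence that the paper only asserts ``by construction'' via thinning and the strong Markov property. One detail in your second paragraph should be corrected: those value sequences must be the values actually fed to each subalgorithm, not the full lower/upper thinned streams. Opposite-type arrivals that fall in the other subtree's phase are discarded and can be given irrelevant fresh values.
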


\begin{proof}
We condition throughout on the event \(\{X_{\tau}=x\}\), in the sense of regular conditional distributions. Recall that after embedding $\pi(1)$, the canonical algorithm recursively embeds the left and right subtrees again via the canonical algorithm. Write
\[
    T^*_\tree=\tau+A_L+A_R,
\]
where \(A_L\) and \(A_R\) are the time spent while
embedding the left and right subtrees after the root. We compute the conditional mean of \(A_L\); the computation for \(A_R\) is symmetric. If \(L=\emptyset\),
then \(A_L=0\). Otherwise, let \(G_1^{(x)},G_2^{(x)},\ldots\) be iid geometric
random variables with parameter \(x\), independent of
\(T^*_L\). The construction of the canonical algorithm gives the distributional
identity
\begin{equation}\label{eq:timechange}
    A_L \stackrel{d}{=} \sum_{j=1}^{T^*_L} G_j^{(x)}.
\end{equation}
Indeed, ``lower stream'' observations used by the recursive algorithm (that is, arrivals $X_t$ with $t > \tau$ and $X_t < x$)  appear after a geometric number of observations with
success parameter \(x\); the claimed identity then follows from Bernoulli thinning and the strong Markov property. Hence
\[
    \EE[A_L\mid X_\tau=x]=\frac{\beta(L)}{x}\,.
\]
In particular, applying the law of total variance to \eqref{eq:timechange},
\begin{align*}
    \pVar{A_L\mid X_\tau=x}
    &= \EE[\,\pVar{A_L \mid T_L^*
    , \,X_\tau = x} \mid X_\tau = x] + \pVar{\,\EE[A_L \mid T_L^*,\, X_\tau = x] \mid X_\tau = x} \\
    &=
    \EE[T^*_L]\,\frac{1-x}{x^2}
    +
    \pVar{T^*_L}\,\frac1{x^2}\\
    &=
    \frac{1-x}{x^2}\beta(L)+\frac{v(L)}{x^2}.
\end{align*}
Similarly,
\[
    \EE[A_R\mid X_\tau=x]=\frac{\beta(R)}{1-x}, \quad 
    \pVar{A_R\mid X_\tau=x}
    =
    \frac{x}{(1-x)^2}\beta(R)+\frac{v(R)}{(1-x)^2}.
\]
Conditional on \(\{X_\tau=x\}\), the variables $\tau$, $A_L$, and $A_R$ are mutually independent by construction of the canonical algorithm. Hence,
\[
    \EE[T^*_\tree\mid X_\tau=x]
    =
    \EE[\tau\mid X_\tau=x]
    +\frac{\beta(L)}{x}
    +\frac{\beta(R)}{1-x},
\]
and
\[
    \pVar{T^*_\tree\mid X_\tau=x}
    =
    \pVar{\tau\mid X_\tau=x}
    +\frac{v(L)}{x^2}
    +\frac{v(R)}{(1-x)^2}
    +\frac{1-x}{x^2}\beta(L)
    +\frac{x}{(1-x)^2}\beta(R).
\]
\end{proof}

Next, we record basic estimates on function $G$ from \eqref{eq:def-G1}. 

\begin{lemma}\label{lem:G-boundary-lower}
Let $p,q \ge 0$. Then
    \begin{equation}\label{eq:G1}
        G(p,q) \ge (\sqrt{p} +\sqrt{q})^2,
    \end{equation}
and
    \begin{equation}\label{eq:G2}
    G(0,q) \ge (\sqrt{q} + 1/2)^2.
    \end{equation}
The symmetric bound for $G(p,0)$ holds as well.   
\end{lemma}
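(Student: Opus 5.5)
Both bounds come from the variational characterization in Proposition~\ref{prop:F-root}: $G(p,q)=v_*$, where $v_*$ is the unique root of
\[
F(v)=\int_0^1 (v-\phi_{p,q}(x))_+\,dx = 1.
\]
Since $F$ is non-decreasing and $F(v)=0$ for $v$ at or below $\inf\phi$, it suffices in each case to exhibit a value $v_0$ with $F(v_0)\le 1$. Then $v_*\ge v_0$, because $F$ is strictly increasing wherever it is positive.

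\textbf{Bound \eqref{eq:G1}.} Take $v_0=(\sqrt p+\sqrt q)^2$, assuming $p,q>0$. By Cauchy--Schwarz,
\[
\frac{p}{x}+\frac{q}{1-x}\ge(\sqrt p+\sqrt q)^2 \quad\text{for all } x\in(0,1),
\]
with equality only at $x=\sqrt p/(\sqrt p+\sqrt q)$. Hence $F(v_0)=0\le 1$, and so $G(p,q)\ge v_0$. A slightly cleaner route is to argue directly from \eqref{eq:def-G1}: for every $a<b$,
\[
G_{p,q}(a,b)\ge \frac{1}{b-a}\int_a^b\phi\ge \inf\phi = v_0.
\]
This version also covers the degenerate cases $p=0$ or $q=0$: if, say, $p=0$, then $\phi(x)=q/(1-x)\ge q$, and the claimed bound $G\ge q$ follows in the same way.

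\textbf{Bound \eqref{eq:G2}.} Here $\phi(x)=q/(1-x)$, with the convention that the term is dropped if $q=0$. Set $v_0=(\sqrt q+1/2)^2$. Substituting $s=1-x$, the sublevel set is $\{s>q/v_0\}$, so
\[
F(v_0)=\int_{q/v_0}^1\Big(v_0-\frac{q}{s}\Big)\,ds = v_0-q-q\log(v_0/q).
\]
It remains to show this is at most $1$. Write $r=\sqrt q$. We need
\[
(r+\tfrac12)^2-r^2-2r^2\log\big(1+\tfrac{1}{2r}\big)\le 1,
\]
which simplifies to
\[
r+\tfrac14-2r^2\log\big(1+\tfrac1{2r}\big)\le 1.
\]
Using $\log(1+u)\ge u-u^2/2$ with $u=1/(2r)$ gives $2r^2\log(1+u)\ge r-\tfrac14$. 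The left side is therefore at most $\tfrac12\le1$.

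The edge cases are immediate. If $q=0$, then $\phi\equiv0$, $F(v)=v$, so $v_*=1\ge 1/4$. If $q$ is small enough that $q/v_0\ge1$, then $F(v_0)=0$. The symmetric statement for $G(p,0)$ follows by the reflection $x\mapsto1-x$.

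The only potentially delicate point is the elementary inequality in the last step, and the Taylor lower bound on the logarithm handles it with room to spare. This slack suggests the constant $1/2$ could even be improved.
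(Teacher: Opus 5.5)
Your proof is correct. For \eqref{eq:G1} you do what the paper does: Cauchy--Schwarz on the integrand shows the averaged objective is at least $\inf\phi_{p,q}=(\sqrt p+\sqrt q)^2$.

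For \eqref{eq:G2} your route is genuinely different. The paper stays in the primal problem. It bounds $G_{0,q}(a,b)$ for every admissible pair by reducing to the single parameter $d=(b-a)/(1-a)$, applying $-\log(1-d)\ge d+d^2/2$, and completing the square. You instead use the level-set characterization $G(p,q)=v_*$ with $F(v_*)=1$ from Proposition~\ref{prop:F-root}. The lower bound then reduces to one explicit evaluation, $F(v_0)=v_0-q-q\log(v_0/q)$, at the candidate value, and monotonicity of $F$ plus uniqueness of its root finish the argument.

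What each buys: the paper's argument is self-contained from the definition of $G$. Yours depends on Proposition~\ref{prop:F-root}, but it avoids optimizing over $(a,b)$ altogether and makes the slack transparent. Your closing remark is right. Running your computation with $v_0=(\sqrt q+c)^2$ and $\log(1+u)\ge u-u^2/2$ gives $F(v_0)\le 2c^2$. Hence $G(0,q)\ge(\sqrt q+1/\sqrt2)^2$ for all $q\ge0$. This constant is optimal, since Lemma~\ref{lem:G-boundary} gives $G(0,q)=q+\sqrt{q}\,u_q$ with $u_q\to\sqrt2$, i.e.\ $G(0,q)=q+\sqrt{2q}+O(1)$. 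Nothing downstream needs the sharper constant, though.

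One harmless slip: your ``edge case'' $q/v_0\ge1$ never occurs for $q>0$, because $q/v_0=\big(\sqrt q/(\sqrt q+\tfrac12)\big)^2<1$.
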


\begin{proof}
The Cauchy-Schwarz inequality implies $p/x + q/(1-x) \ge (\sqrt{p} + \sqrt{q})^2$ for all $x \in (0,1)$. Applying this to the definition of $G$ gives the first claim. For the second claim, fix $0<a<b<1$ and set
\[
    d:=\frac{b-a}{1-a}\in(0,1).
\]
Then $b-a=(1-a)d\le d$ and
\[
    \log\frac{1-a}{1-b}=-\log(1-d).
\]
Thus it suffices to prove
\[
    \frac{1+q\pa{-\log(1-d)}}{d}
    \ge
    \pa{\sqrt q+\frac12}^2 .
\]
Using $-\log(1-d)\ge d+d^2/2$, we get
\begin{align*}
    1+q\pa{-\log(1-d)}
    -d\pa{\sqrt q+\frac12}^2
    &\ge
    1+\frac{d^2q}{2}-d\sqrt q-\frac d4  \\
    &=
    \frac{d^2}{2}\pa{\sqrt q-\frac1d}^2
    +\frac12-\frac d4
    \ge 0 .
\end{align*}
Since this holds for every $0<a<b<1$, taking the infimum over $(a,b)$ proves the claim.
\end{proof}

With the above lemmas in hand, we prove the variance bound by induction on the size of the tree.

\begin{proof}[Proof of Proposition~\ref{prop:variance-canonical}] The expectation identity follows from Lemma~\ref{lemma:fix-rec-UB} and Theorem~\ref{thm:recursion-formula}. The empty tree is trivial, so fix a non-empty tree $\tree$, and let $L$ and $R$ denote the left and right subtrees of its root. It remains to prove the variance bound.
It will be convenient to have the shorthand:
\[
B:=\beta(\tree), \qquad B_L:=\beta(L), \qquad B_R:=\beta(R).
\]

We first treat the case $L\neq\emptyset$ and $R\neq\emptyset$. Let $(a,b)$
be an optimizer in \eqref{eq:rec fixed}, set $\Delta:=b-a$, and let
$U:=X_{\tau_1}$, so that $U\sim \mathrm{Unif}(a,b)$. Set
\[
f(x):=\frac{B_L}{x}+\frac{B_R}{1-x},
\qquad
r(x):=B-f(x),
\qquad x\in[a,b].
\]

Thus \(f(x)\) is the conditional expected downstream time to embed, while
\(r(x)=B-f(x)\) is the slack compared to the unconditional value \(B\). Then $(a,b)$ minimizes
\[
    \frac{1+\int_a^b f(x)\,dx}{b-a}.
\]
In this two-child case the optimizer is interior, and the first-order
stationary conditions give
\begin{equation}\label{eq:opt-endpoints}
B=f(a)=f(b).
\end{equation}
Hence $r(a)=r(b)=0$. Since $f$ is convex on $(0,1)$, the function $r$ is concave on $[a,b]$. Moreover, we have from \eqref{eq:rec fixed} that:
\begin{equation}\label{eq:r-area}
\int_a^b r(x)\,dx=(b-a)\pa{B - \frac{1}{b-a}\int_a^b \frac{B_L}{x} + \frac{B_R}{1-x}\,dx} = 1.
\end{equation}
By concavity of $r$, we thus obtain the crude bound:
\begin{equation}\label{eq:r-sup}
\sup_{x\in[a,b]} r(x)\le \frac{2}{\Delta}.
\end{equation}
This inequality gives a deterministic upper bound on the amount that the actual value of $U = X_{\tau_1}$ can affect the downstream expected embedding time. Next, we lower bound $\Delta := b-a$. From \eqref{eq:opt-endpoints}, we have
\begin{equation}\label{eq:ab-identities}
ab=\frac{B_L}{B},
\qquad
(1-a)(1-b)=\frac{B_R}{B}.
\end{equation}
Further, solving $f'(x)=0$ yields:
\[
\min_{x\in(0,1)} f(x)=(\sqrt{B_L}+\sqrt{B_R})^2.
\]
We extract two important consequences from this identity. First, as an aside, we record for later usage:
\begin{equation}\label{eq:B f comparison}
    B \stackrel{\eqref{eq:opt-endpoints}}{=} f(a)
    \ge \min_{x \in (0,1)} f(x)
    = (\sqrt{B_L}+\sqrt{B_R})^2.
\end{equation}

Second, combining the identity with \eqref{eq:r-area} gives
\begin{equation}\label{eq:integral-bound}
    1\stackrel{\eqref{eq:r-area}}{=} \int_a^b r(x)dx \le \Delta\Big(B-(\sqrt{B_L}+\sqrt{B_R})^2\Big).
\end{equation}
On the other hand, the explicit formula for the two roots of $f(x)=B$ yields
\[
\Delta
=
\frac{\sqrt{\big(B-(\sqrt{B_L}+\sqrt{B_R})^2\big)\big(B-(\sqrt{B_L}-\sqrt{B_R})^2\big)}}{B}.
\]
Using the inequality $(\sqrt{B_L}+\sqrt{B_R})^2 \geq (\sqrt{B_L}-\sqrt{B_R})^2$, valid because $B_L,B_R \ge 0$, yields
\begin{equation}\label{eq:B_Delta-bound}
   B\Delta \ge B-(\sqrt{B_L}+\sqrt{B_R})^2. 
\end{equation}
Thus, we obtain the desired lower bound on $\Delta$:
\begin{equation}\label{eq:delta-bound}
\Delta^{-2}\le B.
\end{equation}

We are ready to apply \cref{lemma:fixed-rec-var}. Note that $\tau := \tau_1$ is geometric with parameter $\Delta$ and is independent of $U$. Recall that the variance of a geometric with parameter $p$ is $(1-p)/p^2$. Conditioning on $U$ using the law of total variance, and then applying \cref{lemma:fixed-rec-var}, we obtain:
\begin{align}
 v(\tree)
 &= \frac{1-\Delta}{\Delta^2}
 + \EE\br{\frac{v(L)}{U^2}}
 + \EE\br{\frac{v(R)}{(1-U)^2}} \notag\\
 &\quad
 + B_L\,\EE\br{\frac{1-U}{U^2}}
 + B_R\,\EE\br{\frac{U}{(1-U)^2}}
 + \pVar{\frac{B_L}{U}+\frac{B_R}{1-U}}.
 \label{eq:variance-recursion-exact}
\end{align}
It remains to bound each of the terms on the right-hand side of this identity using $B$, $B_L$, and $B_R$ in order to ``close'' the recursion. By \eqref{eq:ab-identities}, 
\[
\EE\br{U^{-2}}=\frac{1}{ab}=\frac{B}{B_L},
\qquad
\EE\br{(1-U)^{-2}}=\frac{1}{(1-a)(1-b)}=\frac{B}{B_R}.
\]
Here, we are assuming that $L$ and $R$ are non-empty in the first and second equations, respectively, so that $B_L$ and $B_R$ are non-zero. (We will not need these bounds in the case that $L$ or $R$ is empty). Thus, noting that $v(L)$ and $v(R)$ are non-random scalars, linearity of expectation yields
\[
\EE\br{\frac{v(L)}{U^2}}=\frac{B}{B_L}v(L).
\qquad
\EE\br{\frac{v(R)}{(1-U)^2}}=\frac{B}{B_R}v(R).
\]
Similarly, using the trivial bound that $U \in [0,1]$, we have:
\[
B_L\,\EE\br{\frac{1-U}{U^2}}\le B_L\,\EE\br{\frac{1}{U^2}}= B,
\qquad
B_R\,\EE\br{\frac{U}{(1-U)^2}} \le B_R\,\EE\br{\frac{1}{(1-U)^2}} = B.
\]
Finally, since
\[
\frac{B_L}{U}+\frac{B_R}{1-U}=B-r(U),
\]
we have, by \eqref{eq:r-area}, \eqref{eq:r-sup}, and \eqref{eq:delta-bound},
\[
\pVar{\frac{B_L}{U}+\frac{B_R}{1-U}}
= \pVar{r(U)}
\le \EE[r(U)^2]
\le \frac{2}{\Delta}\cdot \frac{1}{\Delta}
\le 2B.
\]
Substituting into \eqref{eq:variance-recursion-exact} yields
\begin{equation}\label{eq:variance-recursion-final}
 v(\tree)
 \le C_0 B
 + \frac{B}{B_L}v(L)\,\ind_{\{L\neq\varnothing\}}
 + \frac{B}{B_R}v(R)\,\ind_{\{R\neq\varnothing\}}.
\end{equation}

Consider the following hypothesis, which we aim to establish for every finite tree $\tree$: 
\begin{equation}\label{eq:var induction goal}
    v(\tree)\le 2C_0\,\beta(\tree)^{3/2} - C_0 \, \beta(\tree).
\end{equation}

We now prove \eqref{eq:var induction goal} by induction on the number of vertices of \(\tree\). The one-vertex tree has \(\beta(\tree)=1\) and \(v(\tree)=0\). Now let
\(|\tree|\ge2\), and assume the claim holds for all proper subtrees of
\(\tree\). Then it applies to the non-empty members of \(\{L,R\}\).

\textbf{Case 1:} ($L$ and $R$ are non-empty). Applying \eqref{eq:variance-recursion-final}, followed by the inductive hypothesis, and then finally \eqref{eq:B f comparison}, we have:
\begin{align*}
    v(\tree) &\le C_0 B
 + \frac{B}{B_L}v(L)\,\ind_{\{L\neq\varnothing\}}
 + \frac{B}{B_R}v(R)\,\ind_{\{R\neq\varnothing\}} \\
 &\le C_0B +  B(2C_0\sqrt{B_L} - C_0) + B(2C_0\sqrt{B_R} - C_0) \\
 &\le 2C_0 B^{3/2} - C_0 B \,.
\end{align*}
The induction is complete in this case. \\

\textbf{Case 2:} ($L$ or $R$ is empty). If $L = R = \emptyset$, we are in the base-case of $B = 1$, so we may assume that exactly one of $L$ and $R$ is empty. Without loss of generality, say $R = \emptyset$. 
By \eqref{eq:G2},
\begin{equation}\label{eq:B f comparison 1child}
    B \ge  \left(\sqrt{B_L} + \frac{1}{2}\right)^2. 
\end{equation}
Applying \eqref{eq:variance-recursion-final} and the inductive hypothesis, followed by \eqref{eq:B f comparison 1child},
\begin{align*}
    v(\tree)
 &\le C_0B +  B\left(2C_0\sqrt{B_L} - C_0 \right) \le C_0B + B\left(2C_0\left(\sqrt{B} - \frac{1}{2}\right) - C_0\right) = 2C_0B^{3/2} - C_0B \,.
\end{align*}
This completes the induction and establishes \eqref{eq:var induction goal}. The bound \eqref{eq:fixed-prob-UB} follows immediately from Chebyshev's inequality.

\end{proof}

\subsection{Proof of concentration claims in \cref{thm:main-random}}

Our goal is to prove \eqref{eq:goal-beta-concentration}. The strategy consists of four steps. First, we will show that $\tree_\pi$ for $\pi \sim \mu_k$ has the distribution of a random binary search tree (BST), a well-studied model that is defined shortly. Second, we will show that, for some bounded function $\eta$ which maps trees to $[0,1]$, we can write $\sqrt{\beta(\tree)}$ as a sum of $\eta$ over subtrees:
\[
    \sqrt{\beta(\tree)} =  \sum_{v \in \tree} \eta(\tree_v)\,.
\]
The main mathematical content here is showing that $\eta$ is bounded. 
Third, we will utilize one of the numerous readily available concentration results for such functionals on random BSTs, yielding variance bounds as well as a CLT. Unfortunately, a general drawback of the utilized machinery is that we only obtain a CLT with a finite non-negative variance parameter $\sigma^2 \ge 0$. Consequently, as a fourth and final step, we prove a fluctuation lower bound, implying $\sigma^2 > 0$ and hence pin down the exact asymptotic order of the fluctuations of $\beta(\pi)$ as $k^{3/2}$. We begin with the standard definition of a random BST:

\begin{definition}[BST]\label{def:BST}
Let $\pi \in \Pi_k$ be a permutation. The binary search tree 
$\mathcal{T}_{\mathrm{BST},\pi}$ is constructed by inserting nodes labeled $1,\dots,k$ (containing keys
$\pi(1),\pi(2),\dots,\pi(k)$, respectively) sequentially into an initially empty binary search tree. The first node, labeled 1, is inserted as the root. Each remaining node $i$ is sequentially inserted as follows. Start at the root; descend to the left if the key $\pi(i)$ is smaller 
than the key of the current node, and descend to the right otherwise. Repeat this descent until an empty position is reached, and insert the current node at this empty position. If $\pi$ was drawn uniformly at random from $\Pi_k$, then we further say that $\mathcal{T}_{\mathrm{BST},\pi}$ is a \textit{random} BST on $[k]$.
\end{definition}

\begin{proposition}[Equivalence of tree constructions]\label{prop:tree is rBST}
    Given a permutation $\pi$, the labeled trees $\tree_\pi$ and  $\mathcal{T}_{\mathrm{BST},\pi}$ are equal. In particular, if $\pi \sim \mu_k$, then $\tree_\pi$ is distributed as a random BST on $[k]$. 
\end{proposition}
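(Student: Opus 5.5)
We will show that the two trees are equal by induction on the insertion index, comparing the parent–child relations node by node. Both trees are rooted at label $1$, and in both every node $i\ge 2$ is a child of some earlier node $m<i$. So it suffices to prove the following claim. For each $m$, the left child of $m$ in $\mathcal{T}_{\mathrm{BST},\pi}$ is
\[
\min\{\ell>m:\pi(\ell)<\pi(m),\ [\pi(\ell),\pi(m)]\cap\{\pi(s):s<m\}=\varnothing\},
\]
and the symmetric statement holds for the right child. A rooted binary tree with labeled, left/right-designated children is determined by its child relations, so the claim gives equality of the labeled trees.

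The key step is a standard characterization of BST descent. When key $\pi(\ell)$ is inserted, it passes through node $m<\ell$ exactly when no earlier key $\pi(s)$ with $s<m$ lies weakly between $\pi(\ell)$ and $\pi(m)$. Equivalently, $\pi(\ell)$ lies in the open ``gap interval'' of $m$, namely the interval cut out by the keys $\{\pi(s):s<m\}$ that contains $\pi(m)$. We prove this by induction along the search path, using that node $m$ is placed at the position reached by descending through its ancestors. The descent of $\pi(\ell)$ agrees with that of $\pi(m)$ at an ancestor $s$ precisely when $\pi(\ell)$ and $\pi(m)$ are on the same side of $\pi(s)$. Moreover, the ancestors of $m$ are exactly the earlier keys bounding its gap. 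This second fact needs a short argument: any earlier key between $\pi(\ell)$ and $\pi(m)$ would split their paths at the first such key encountered.

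With this characterization, the left child of $m$ in the BST is the first $\ell>m$ that reaches $m$ and goes left. Here ``reaches $m$'' is the gap condition and ``goes left'' is $\pi(\ell)<\pi(m)$. Such an $\ell$ is inserted exactly as the left child, because any earlier $\ell'$ with $m<\ell'<\ell$ satisfying the same conditions would already have occupied that slot. Conversely, the node that does occupy the slot is this first $\ell$.

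One subtlety needs care. The definition of $\tree_\pi$ excludes only keys $\pi(s)$ with $s<m$, so it ignores keys inserted between $m$ and $\ell$. This is consistent with the BST only because of the minimality of $\ell$. Any such intermediate key in $(\pi(\ell),\pi(m))$ would itself be a smaller qualifying index, since it lies in the same gap of $m$ and on the left side. Hence the minimum in the definition automatically picks the first arrival into the left gap of $m$.

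The distributional statement then follows immediately. The map $\pi\mapsto\tree_\pi$ equals $\pi\mapsto\mathcal{T}_{\mathrm{BST},\pi}$, so pushing forward $\mu_k$ gives the random BST law by Definition~\ref{def:BST}.

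The main obstacle is writing the path/gap characterization cleanly, in particular the claim that the ancestors of $m$ are exactly the earlier keys bounding its gap. The plan is to prove it by strong induction on $m$, together with the child-identification claim.
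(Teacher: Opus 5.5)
Your proposal is correct in substance, but it takes a different route from the paper. The paper inducts on $k$ by splitting at the root. In both constructions, the labels $i>1$ with $\pi(i)<\pi(1)$ (resp.\ $>\pi(1)$) make up the left (resp.\ right) subtree of the root, and the induction hypothesis is then applied to $L(\pi)$ and $R(\pi)$.

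That argument is short and uses the same recursive structure as the rest of the paper. It does leave some things implicit, though. First, it relabels insertion times. Second, it uses without comment that the child rule defining $\tree_\pi$ restricts correctly to the subpermutations. This holds because $\pi(1)$ is an earlier key for every $m>1$, so it blocks any child relation across $\pi(1)$, and keys on the other side never meet the relevant intervals $[\pi(\ell),\pi(m)]$.

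Your approach instead checks directly, node by node, that the two left/right child maps coincide. This makes the equality of labeled trees fully explicit. It also explains why the definition of $\tree_\pi$ only refers to keys $\pi(s)$ with $s<m$: the set over which the minimum is taken is exactly the set of later labels in the left subtree of $m$, and the minimum picks out its root. That set does not depend on keys inserted between $m$ and $\ell$ at all. So minimality is what selects the child; it is not what makes the definition ``consistent'' with the BST.

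One sub-claim needs fixing. As written, ``the ancestors of $m$ are exactly the earlier keys bounding its gap'' is false. For $\pi=\mathrm{Id}_k$, node $k$ has $k-1$ ancestors, while its gap is bounded by the single earlier key $k-1$.

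What you actually need is weaker: if some earlier key $\pi(s)$, $s<m$, lies strictly between $\pi(\ell)$ and $\pi(m)$, then some \emph{ancestor} key of $m$ does too. Without this, ``the first such key encountered'' is not well defined, since the offending key need not lie on either search path.

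The fix takes one line. Suppose $s<m$ is not an ancestor of $m$. Labels increase along root paths, so $m$ is not an ancestor of $s$ either. Hence the lowest common ancestor $w$ of $s$ and $m$ is a proper ancestor of $m$, with $s$ and $m$ in different subtrees of $w$. Then $\pi(w)$ lies strictly between $\pi(s)$ and $\pi(m)$, and therefore between $\pi(\ell)$ and $\pi(m)$. Taking the first such ancestor along the root path of $m$ gives the point where the two descents split. With this repair, the rest of your argument goes through as planned.
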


\begin{proof}
We proceed by induction on $k = |\pi|$. For $k\le 1$, the claim is immediate. Now let $k\ge 2$, and assume the claim holds for all smaller permutations. Let $\pi\in \Pi_k$. Both $\tree_\pi$ and $\mathcal{T}_{\mathrm{BST},\pi}$ have root node with label 1. It is also easily checked that every node with label $i > 1$ satisfying $\pi(i) < \pi(1)$ is, by construction, embedded somewhere into the left subtree of the root. Similarly, every $i > 1$ with $\pi(i) > \pi(1)$ is embedded into the right subtree of the root. Hence the list of node labels in $\tree_L$ and $\tree_R$ are the same for all three methods of constructing the tree $\tree_\pi$. The result follows from the induction hypothesis. 
\end{proof}

With this observation in hand, we aim to express $\sqrt{\beta}$ in terms of a bounded functional on trees, so as to apply existing concentration results.

\begin{lemma}[Bounded tree functional]\label{lem:eta bounded}
For every non-empty finite rooted binary tree $\tree$ with left and right subtrees $\ltree$ and $\rtree$,
\[
    0 \le \sqrt{\beta(\tree)}-\sqrt{\beta(\ltree)}-\sqrt{\beta(\rtree)} \le 1.
\]
In particular, if we define
\[
    \eta(\tree)
    :=
    \sqrt{\beta(\tree)}-\sqrt{\beta(\ltree)}-\sqrt{\beta(\rtree)},
\]
then $0\le \eta(\tree)\le 1$.
\end{lemma}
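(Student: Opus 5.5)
The plan is to prove the two inequalities separately, both as statements about the kernel $G$. By Theorem~\ref{thm:recursion-formula}, $\beta(\tree)=G(p,q)$ with $p:=\beta(\ltree)$ and $q:=\beta(\rtree)$. Write $s:=\sqrt p+\sqrt q$. The claim is then exactly
\[
s^2\le G(p,q)\le (s+1)^2 .
\]

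The lower bound is immediate from \eqref{eq:G1} in Lemma~\ref{lem:G-boundary-lower}. Taking square roots gives $\eta(\tree)\ge0$.

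For the upper bound I will use the characterization in Proposition~\ref{prop:F-root}. There, $G(p,q)=v_*$ is the unique root of $F(v)=1$, where
\[
F(v)=\int_0^1\big(v-\phi_{p,q}(x)\big)_+\,dx .
\]
Since $F$ is non-decreasing, it suffices to show $F(v)\ge1$ at $v:=(s+1)^2$. A short check is that $F$ is strictly increasing wherever it is positive, so $F(v)\ge 1=F(v_*)$ forces $v\ge v_*$.

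To lower bound $F(v)$, I use convexity of $\phi:=\phi_{p,q}$.
\begin{itemize}
\item The function $v-\phi$ is concave and nonnegative on the sublevel interval $I_v$.
\item Its maximum there is $v-\inf\phi=v-s^2=2s+1$. A Cauchy--Schwarz computation gives $\inf_{(0,1)}\phi=s^2$, including the case where one of $p,q$ vanishes and the infimum sits at an endpoint.
\item A nonnegative concave function on an interval has integral at least half the length times its maximum (the triangle bound). Hence $F(v)\ge \tfrac12|I_v|(2s+1)$.
\end{itemize}

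The length $|I_v|$ comes from the explicit roots of $p/x+q/(1-x)=v$, as in the derivation of \eqref{eq:B_Delta-bound}:
\[
|I_v|=\frac{\sqrt{(v-s^2)\,\big(v-(\sqrt p-\sqrt q)^2\big)}}{v}\;\ge\;\frac{v-s^2}{v}=\frac{2s+1}{(s+1)^2}.
\]
When one of $p,q$ is zero, $I_v$ runs up to the boundary of $[0,1]$ (for instance $I_v=(p/v,1]$ when $q=0$), and the same formula still holds. Combining the two bounds,
\[
F(v)\ge\frac{(2s+1)^2}{2(s+1)^2}.
\]
This is at least $1$ exactly when $2s^2\ge1$.

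The step needing care is the small-$s$ regime, which I handle by cases.
\begin{itemize}
\item If both subtrees are empty, then $\beta(\tree)=G(0,0)=1$, so $\eta(\tree)=1$ and the bound holds.
\item Otherwise at least one subtree is nonempty. Every nonempty tree has $\beta\ge1$, since embedding takes at least one step; this also follows from \eqref{eq:G1} and \eqref{eq:G2} by induction. Hence $s\ge1>1/\sqrt2$, and the estimate above applies.
\end{itemize}

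The remaining routine points are the boundary conventions for $\phi$ when $p$ or $q$ is zero (infinite values, or a sublevel interval touching $0$ or $1$). These do not affect concavity of $v-\phi$ on $I_v$ or the triangle bound.
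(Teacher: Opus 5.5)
Your proof is correct, but your upper bound takes a different route from the paper's. The paper works on the primal side. Writing $p=\sqrt{\beta(\ltree)}$, $q=\sqrt{\beta(\rtree)}$ and $s=p+q$, it plugs the explicit window $a_\varepsilon=(p+\varepsilon)/(s+1+2\varepsilon)$, $b_\varepsilon=(p+1+\varepsilon)/(s+1+2\varepsilon)$ into the variational formula. On this window, of length about $1/(s+1)$, the integrand $\beta(\ltree)/x+\beta(\rtree)/(1-x)$ is at most $s(s+1+2\varepsilon)$, which gives $\beta(\tree)\le (s+1)(s+1+2\varepsilon)$ directly. You instead work on the dual side: you use the level-set characterization $F(v_*)=1$ from Proposition~\ref{prop:F-root} and bound $F((s+1)^2)$ from below. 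For this you combine concavity of $v-\phi$ (the triangle bound) with the explicit width of the sublevel set, as in the derivation of \eqref{eq:B_Delta-bound}. Your steps check out, including the degenerate case $q=0$, where $I_v=(p/v,1]$ and the width formula still holds. The paper's argument is more economical: it needs no information about the optimizer or the sublevel set, and it works uniformly for every $s\ge 0$, leaves included. Your argument loses a factor of $2$ in the triangle bound. That is why you need $2s^2\ge 1$, and hence a separate treatment of the one-vertex tree using $\beta\ge 1$ for nonempty trees. In exchange, it reuses machinery the paper already sets up. It also shows there is slack for large subtrees, since your lower bound $(2s+1)^2/(2(s+1)^2)$ on $F((s+1)^2)$ tends to $2$ as $s\to\infty$.
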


\begin{proof} Denote
\[
    B:=\beta(\tree),\qquad
    B_L:=\beta(\ltree),\qquad
    B_R:=\beta(\rtree),
\]
and
\[
    p:=\sqrt{B_L},\qquad q:=\sqrt{B_R},\qquad s:=p+q.
\]
The lower bound follows immediately from the first assertion of Lemma~\ref{lem:G-boundary-lower}
and the recursive formula for $\beta(\cdot)$. 
For the upper bound, fix $\eps>0$ and define
\[
    a_\eps:=\frac{p+\eps}{s+1+2\eps},
    \qquad
    b_\eps:=\frac{p+1+\eps}{s+1+2\eps}.
\]
The purpose of the regularization term $\eps$ is to enforce the strict inequalities $0<a_\eps<b_\eps<1$. We have
\[
    b_\eps-a_\eps=\frac{1}{s+1+2\eps}.
\]
Moreover, for every $x\in[a_\eps,b_\eps]$ we have
\[
    x\ge a_\eps=\frac{p+\eps}{s+1+2\eps},
    \qquad
    1-x\ge 1-b_\eps=\frac{q+\eps}{s+1+2\eps},
\]
so
\[
    \frac{B_L}{x}
    \le p^2\frac{s+1+2\eps}{p+\eps}
    \le p(s+1+2\eps),
\]
and similarly
\[
    \frac{B_R}{1-x}
    \le q(s+1+2\eps).
\]
Therefore, for every $x\in[a_\eps,b_\eps]$,
\[
    \frac{B_L}{x}+\frac{B_R}{1-x}
    \le s(s+1+2\eps).
\]
Evaluating the recurrence at $(a_\eps,b_\eps)$ gives
\begin{align*}
    B
    \le
    \frac{1}{b_\eps-a_\eps}
    +
    \frac{1}{b_\eps-a_\eps}
    \int_{a_\eps}^{b_\eps}
        \frac{B_L}{x}+\frac{B_R}{1-x}
    \,dx 
    &\le (s+1+2\eps)+s(s+1+2\eps)
    \\ &=(s+1)(s+1+2\eps).
\end{align*}
Letting $\eps\downarrow 0$ yields
$B\le (s+1)^2$. Hence $\sqrt{B}\le \sqrt{B_L} + \sqrt{B_R}+1$, as claimed.
\end{proof}

\begin{corollary}[Representation of $\sqrt{\beta}$ as linear functional]\label{cor:additive}
For every finite rooted binary tree $\tree$,
\[
    \sqrt{\beta(\tree)}
    =
    \sum_{v\in V(\tree)} \eta(\tree_v),
\]
where $\tree_v$ denotes the subtree rooted at $v$. 
\end{corollary}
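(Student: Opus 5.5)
The plan is a strong induction on the number of vertices of $\tree$, unfolding the definition of $\eta$ from Lemma~\ref{lem:eta bounded} one level at a time. We use the conventions $\beta(\emptyset)=0$ and that the empty tree has no vertices, so the sum over $V(\emptyset)$ is zero. The base case $\tree=\emptyset$ then reads $0=0$.

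For the inductive step, let $\tree$ be non-empty with root $\rho$ and fringe subtrees $\ltree,\rtree$ (possibly empty). By the definition of $\eta$ in Lemma~\ref{lem:eta bounded},
\[
    \sqrt{\beta(\tree)}=\eta(\tree)+\sqrt{\beta(\ltree)}+\sqrt{\beta(\rtree)}.
\]
Both $\ltree$ and $\rtree$ have strictly fewer vertices than $\tree$, so the inductive hypothesis lets us rewrite each of $\sqrt{\beta(\ltree)}$ and $\sqrt{\beta(\rtree)}$ as a sum of $\eta$ over its own vertices. An empty subtree contributes zero on both sides, consistent with $\beta(\emptyset)=0$.

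It then remains to note that $V(\tree)$ is the disjoint union of $\{\rho\}$, $V(\ltree)$ and $V(\rtree)$, with $\tree_\rho=\tree$. Moreover, for any $v\in V(\ltree)$, the fringe subtree of $\tree$ rooted at $v$ coincides with the fringe subtree of $\ltree$ rooted at $v$, and the same holds for $\rtree$. This is immediate because a fringe subtree consists of $v$ together with all its descendants, and descendants of a vertex in $\ltree$ lie in $\ltree$. Summing the three pieces gives $\sqrt{\beta(\tree)}=\sum_{v\in V(\tree)}\eta(\tree_v)$.

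There is no real obstacle here. The only points needing care are the empty-subtree convention and the identification of fringe subtrees, both of which are routine. The boundedness $0\le\eta\le1$ from Lemma~\ref{lem:eta bounded} is not needed for the identity itself; it matters only for the subsequent application of the tree CLT.
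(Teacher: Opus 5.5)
Your proposal is correct and is the same argument as the paper's: the paper repeatedly substitutes $\sqrt{\beta(\tree)}=\sqrt{\beta(\ltree)}+\sqrt{\beta(\rtree)}+\eta(\tree)$ into its own right-hand side, and your strong induction on the number of vertices is just a careful version of that. The only additions on your side are making the empty-tree convention and the identification of fringe subtrees of $\ltree$, $\rtree$ with those of $\tree$ explicit, and both are routine.
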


\begin{proof}
Iteratively apply the identity
\[
    \sqrt{\beta(\tree)} = \sqrt{\beta(\ltree)}+\sqrt{\beta(\rtree)}+\eta(\tree)
\]
to its own right-hand side. All of the $\beta$ terms on the right-hand side expand using the
same identity, leaving exactly the claimed sum of $\eta$ terms. 
\end{proof}

\begin{remark}
    Using the bound $\eta\in[0,1]$ recovers the naive upper bound
    \[
        \beta(\tree)\le |\tree|^2,
    \]
    corresponding to the trivial algorithm of embedding each \(j\in[k]\) in
the window \([(\pi(j)-1)/k,\pi(j)/k]\).
\end{remark}

We now need some results from the literature of random BSTs, due to Holmgren and Janson \cite{tree-CLT}. In the terminology of random BSTs, a \textit{fringe tree} is simply a maximal rooted subtree, and a toll function is a functional on fringe trees. Finally, a function on a BST which can be written as a toll function summed over all fringe trees, such as $\sqrt{\beta}$ in \cref{cor:additive}, is called an \textit{additive functional}. Here, $\eta$ is a bounded toll function and $\sqrt{\beta(\cdot)}$ is the corresponding additive functional:
\[
    \sqrt{\beta(\tree)} = \sum_{v\in V(\tree)} \eta(\tree_v).
\]

\begin{theorem}[Holmgren and Janson  {\cite[Theorem~1.14]{tree-CLT}}]\label{thm:HJ CLT}
Let $\tree_k$ be a random BST on $k$ nodes, and let $f$ be a function mapping unlabeled fringe trees to $[0,1]$. Define the following $\tree$-measurable random quantity:
\[
    F_k := \sum_{v\in V(\tree_k)} f\pa{(\tree_k)_v}.
\]
Then, there exists a constant $\sigma^2\ge 0$ depending only on $f$ such that $\pVar{F_k}=\sigma^2 k + \ao{k}$. Moreover, 
\begin{equation}\label{eq:CLT}
    \frac{F_k-\EE[F_k]}{\sqrt{k}}\stackrel{d}{\longrightarrow} N\pa{0,\sigma^2}.
\end{equation}
\end{theorem}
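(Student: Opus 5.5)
Since this is a result quoted from \cite{tree-CLT}, we will simply invoke it. If we wanted a self-contained argument, the plan would be the standard ``truncate the toll function'' route for additive functionals of random BSTs.

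\textbf{Step 1: truncation.} Write $f=f_K+g_K$, where $f_K(\tree):=f(\tree)\ind_{\{|\tree|\le K\}}$ and $g_K(\tree):=f(\tree)\ind_{\{|\tree|>K\}}$. Correspondingly split $F_k=F_k^{(K)}+R_k^{(K)}$.

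\textbf{Step 2: the truncated part.} There are finitely many unlabeled binary trees with at most $K$ nodes. Hence $F_k^{(K)}$ is a fixed finite linear combination of the counts $N_k(t)$ of fringe subtrees isomorphic to each shape $t$ with $|t|\le K$. For these counts, joint asymptotic normality is classical: the variance and covariances are linear in $k$, and the vector $(N_k(t)-\EE N_k(t))/\sqrt{k}$ is jointly Gaussian in the limit. One can see this via the recursion $N_k(t)\stackrel{d}{=}N_{I}(t)+N'_{k-1-I}(t)+\ind\{\tree_k\simeq t\}$ with $I$ uniform on $\{0,\dots,k-1\}$, and either the contraction method or a martingale (urn/Pólya) representation of the insertion process. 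This gives $\pVar{F_k^{(K)}}=\sigma_K^2k+o(k)$ and a Gaussian limit with variance $\sigma_K^2$.

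\textbf{Step 3: the remainder (main obstacle).} We need
\[
  \limsup_{k\to\infty}\frac{\pVar{R_k^{(K)}}}{k}\le \varepsilon_K\to0 \qquad\text{as } K\to\infty.
\]
First we would use the exact recursion for the variance of an additive functional:
\[
  \pVar{F_k}=\EE\,\pVar{\,\cdot\mid I}+\pVar{\EE[\,\cdot\mid I]}.
\]
Unrolling it bounds $\pVar{R^{(K)}_k}$ by a sum over $n\le k$ of $(\text{expected number of fringe trees of size }n)\times(\text{``local'' variance at size } n)$. The expected number of fringe trees of size $n$ is $\approx 2k/((n+1)(n+2))$. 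Since $0\le g_K\le 1$ and $g_K$ vanishes for sizes at most $K$, the local term at size $n$ should be bounded by $O\bigl(\sum_{m>K}\cdot\bigr)$-type tails. The covariance between a tree's toll and its descendants' tolls must be controlled using the mean recursion $\EE F_n=\sum_j \EE f(\tree_j)\cdot(\text{harmonic weights})$, which gives a square-summable influence of order $n^{-1/2}$ or better. This gives $\varepsilon_K=O(K^{-1/2})$ or similar. This covariance bookkeeping is the delicate step.

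\textbf{Step 4: conclusion.} Given Steps 2 and 3, $\sigma_K^2$ is Cauchy, with limit $\sigma^2:=\lim_K\sigma_K^2\ge0$. Then $\pVar{F_k}=\sigma^2k+o(k)$ by Cauchy--Schwarz on the cross term. Finally, a standard approximation lemma (Billingsley's Theorem 3.2) transfers the Gaussian limit from $F_k^{(K)}$ to $F_k$, proving \eqref{eq:CLT}.
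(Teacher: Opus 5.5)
Your approach matches the paper's: the paper gives no proof of this statement and simply imports it from Holmgren and Janson \cite{tree-CLT}, then applies it with the bounded toll function $f=\eta$. Your truncation sketch is not needed for this, and Step 3 (the remainder variance bound) is only heuristic as written, so the sketch would not stand on its own as a proof.
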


We now deduce the main result of this section, save for the variance lower bound $\sigma^2 > 0$. 

\begin{proof}[Proof of \cref{thm:main-random} (with $\sigma^2 \ge 0$)]
Let $\pi \sim\mu_k$, let $\tree=\tree_\pi$, and set
\[
    H_k:=\sqrt{\beta(\tree)},\qquad m_k:=\EE H_k.
\]
The recurrence \eqref{eq:rec fixed} implies that \(\beta(\tree)\), and hence
\(\eta(\tree)\), depends only on the rooted unlabeled shape of \(\tree\).
Together with \cref{lem:eta bounded,cor:additive}, this shows that
\(f:=\eta\) is a bounded toll function on unlabeled fringe trees, taking values
in \([0,1]\). Therefore \cref{thm:HJ CLT} gives, for some \(\sigma_H^2\ge0\),
\[
    \frac{H_k-m_k}{\sqrt{k}}
    \stackrel{d}{\longrightarrow}
    \sigma_H Z,
    \qquad
    \pVar{H_k}=\sigma_H^2 k+o(k),
\]
where \(Z\) is standard normal. Using \(\beta_k^\typ=\EE H_k^2=m_k^2+\pVar{H_k}\) and the existence of
\(c_\typ=\lim_k\beta_k^\typ/k^2\), proved below in
\cref{lem:subsupadd}, we have
\begin{equation}\label{eq:mk-limit}
    \frac{m_k}{k}\longrightarrow \sqrt{c_\typ}.
\end{equation}
Now
\[
\frac{\beta(\pi)-\beta_k^\typ}{k^{3/2}}
=
2\frac{m_k}{k}\frac{H_k-m_k}{\sqrt{k}}
+
\frac{(H_k-m_k)^2-\pVar{H_k}}{k^{3/2}}.
\]
The second term converges in probability to zero, while the first term converges in
distribution to \(2\sqrt{c_\typ}\,\sigma_H Z\). Hence
\[
    \frac{\beta(\pi)-\beta_k^\typ}{k^{3/2}}
    \stackrel{d}{\longrightarrow}
    2\sqrt{c_\typ}\,\sigma_H Z.
\]
This proves the CLT, with limiting variance
\(\sigma^2=4c_\typ\sigma_H^2 \ge 0\). The concentration statement
\eqref{eq:goal-beta-concentration} follows.
\end{proof}

As an interlude, we naturally obtain control over $\sqrt{c_\typ}$, the mean of the above CLT, and hence over $c_\typ$. This result will be used shortly in \cref{sec:scaling} to obtain rigorous estimates on $c_\typ$.

\begin{proposition}[Tight lower-envelope on $c_\typ$]
  \begin{equation}\label{eq:ctyp-LB}
    c_\typ = \sup_{k \in \NN} \Big(\frac{2}{(k+1)(k+2)} \sum_{j=1}^k \sqrt{\beta_j^\typ}\Big)^2 \geq \sup_{k \in \NN} \Big(\frac{2}{(k+1)(k+2)} \sum_{j=1}^k \sqrt{\beta_j^-}\Big)^2.
\end{equation}  
\end{proposition}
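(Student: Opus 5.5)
The plan is to work with $H_j:=\sqrt{\beta(\tree_j)}$ for a random BST $\tree_j$ on $j$ nodes (Proposition~\ref{prop:tree is rBST}), and set $m_j:=\EE H_j$. First I prove that the weighted averages
\[
A_k:=\frac{2}{(k+1)(k+2)}\sum_{j=1}^k m_j
\]
are nondecreasing in $k$ and converge to $\sqrt{c_\typ}$, so that $\sqrt{c_\typ}=\sup_k A_k$. The comparison with $\beta^-$ then follows from pointwise bounds.

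\textbf{Step 1 (recursion for $m_k$).} For a random BST the root key is uniform on $[k]$. Conditional on its rank $i$, the two fringe subtrees are independent random BSTs of sizes $i-1$ and $k-i$. Taking expectations in Corollary~\ref{cor:additive}, equivalently in $H_k=\eta(\tree_k)+H_L+H_R$, gives
\[
m_k=\mu_k+\frac{2}{k}\sum_{j=0}^{k-1}m_j,\qquad \mu_k:=\EE\,\eta(\tree_k)\in[0,1],
\]
using Lemma~\ref{lem:eta bounded} and $m_0=0$.

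\textbf{Step 2 (telescoping).} Write $S_k=\sum_{j\le k}m_j$. Substituting $m_{k+1}=\mu_{k+1}+\tfrac{2}{k+1}S_k$ into $A_{k+1}=\tfrac{2(S_k+m_{k+1})}{(k+2)(k+3)}$ and using $1+\tfrac{2}{k+1}=\tfrac{k+3}{k+1}$ gives exactly
\[
A_{k+1}=A_k+\frac{2\mu_{k+1}}{(k+2)(k+3)}.
\]
Since $\mu_{k+1}\ge0$, the sequence $(A_k)$ is nondecreasing, and hence $\lim_k A_k=\sup_k A_k$.

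\textbf{Step 3 (identifying the limit).} By \eqref{eq:mk-limit}, $m_j/j\to\sqrt{c_\typ}$; this uses the existence of $c_\typ$ from \cref{lem:subsupadd} together with $\pVar{H_j}=O(j)$. A Cesàro/Stolz argument then gives $S_k\sim\sqrt{c_\typ}\,k^2/2$, so $A_k\to\sqrt{c_\typ}$. Squaring the relation $\sqrt{c_\typ}=\sup_k A_k$ gives the claimed identity.

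\textbf{Step 4 (comparison with $\beta^-$).} Since $\beta(\tree_j)\ge\beta_j^-$ pointwise, we have $\sqrt{\beta_j^-}\le m_j$. Hence the $A_k$ built from $\sqrt{\beta_j^-}$ are termwise smaller, which yields the final inequality.

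\textbf{Main obstacle.} The statement writes $\sqrt{\beta_j^\typ}$ rather than $m_j=\EE\sqrt{\beta(\tree_j)}$. Jensen gives $m_j\le\sqrt{\beta_j^\typ}$, and the gap is $\pVar{H_j}/(\sqrt{\beta_j^\typ}+m_j)=O(1)$. This gap does not change the limit of the averages, but it is unclear whether the supremum could then exceed $\sqrt{c_\typ}$. I would therefore prove the identity in the form with $m_j$, which is the quantity that satisfies the exact recursion above. I would read $\sqrt{\beta^\typ_j}$ in the statement as $\EE\sqrt{\beta}$ under $\mu_j$. The inequality with $\beta^-$, which is what is used later, holds in either reading.
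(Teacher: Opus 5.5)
Your proposal is correct and is essentially the paper's proof: the paper also sets $A_k=\frac{2}{(k+1)(k+2)}\sum_{j\le k}m_j$ with $m_j=\EE\sqrt{\beta(\tree_j)}$, gets monotonicity from $\eta\ge 0$ in the form $m_k\ge\frac{2}{k}\sum_{j<k}m_j$ (your exact identity $A_{k+1}=A_k+\frac{2\mu_{k+1}}{(k+2)(k+3)}$ is the same computation), identifies the limit via \eqref{eq:mk-limit} and Stolz--Ces\`aro, and then compares termwise using $\sqrt{\beta_j^-}\le m_j$. Your caution about $\sqrt{\beta_j^\typ}$ is well founded, and the literal version in fact fails for large $k$, for the following reasons. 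The paper's proof likewise only treats $m_j$ and silently identifies it with $\sqrt{\beta_j^\typ}$. By \cref{lem:positive-H-variance} and $\sqrt{\beta_j^\typ}+m_j\le 2j$, the gap $\sqrt{\beta_j^\typ}-m_j=\mathrm{Var}(H_j)/(\sqrt{\beta_j^\typ}+m_j)$ is bounded below by a positive constant for large $j$, so it raises $A_k$ by at least order $1/k$. Meanwhile your telescoping gives $\sqrt{c_\typ}-A_k\le\frac{2}{k+2}\sup_{j>k}\mu_j=o(1/k)$, because $\mu_j\to0$: when both root subtrees have linear size, $\eta(\tree_j)=O(j^{-1/3})$.
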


\begin{proof}
    We continue to use the notation of the previous proof. Additionally, denote the sequence in the right-hand side of \eqref{eq:ctyp-LB} by
    \begin{equation}\label{eq:ctyp-lower-sequence}
    A_k := \frac{2}{(k+1)(k+2)}\sum_{j=1}^k m_j.
\end{equation}
We claim both that $A_k$
is non-decreasing in \(k\) and that $A_k \to \sqrt{c_\typ}$. Towards establishing monotonicity, we have from
\cref{lem:eta bounded} that every nonempty tree \(T\) satisfies
\[
    \sqrt{\beta(T)}
    \ge
    \sqrt{\beta(T_L)}+\sqrt{\beta(T_R)}.
\]
We apply this to $\tree_k$ and its subtrees $\tree_L$ and $\tree_R$. Note that $|\tree_L|$ is uniform on
\(\{0,\dots,k-1\}\). Hence, on the event $\{|\tree_L| = i\}$, it holds that $\tree_L$ and $\tree_R$ are conditionally distributed as random BSTs on \(i\) and \(k-1-i\)
vertices, respectively. Taking expectations gives
\begin{equation}\label{eq:mk-superaverage}
    m_k
    \ge
    \frac1k\sum_{i=0}^{k-1}
        \bigl(m_i+m_{k-1-i}\bigr)
    =
    \frac{2}{k}\sum_{j=1}^{k-1}m_j.
\end{equation}
The claimed monotonicity of $A_k$ follows:
\begin{align*}
    A_k - A_{k-1} =\frac{2}{(k+1)(k+2)}\sum_{j=1}^k m_j - \frac{2}{k(k+1)}\sum_{j=1}^{k-1}m_j  = \frac{2}{(k+1)(k+2)} \left(m_k-\frac{2}{k}\sum_{j=1}^{k-1}m_j\right) \ge0.
\end{align*}
Next, we establish the claim that $A_k \to \sqrt{c_\typ}$. We compute
\begin{align*}
\lim_{k\to\infty}
\frac{2}{(k+1)(k+2)}\sum_{j=1}^k m_j = \lim_{k\to\infty}
\frac{m_k}{k+1} \stackrel{\eqref{eq:mk-limit}}{=} \sqrt{c_\typ}.
\end{align*}
The first equality uses the Stolz--Ces\`aro Theorem to evaluate the sum. Hence, for every \(k\ge1\), non-negativity of $m_j$ implies
\[
    \sqrt{c_\typ}
    \ge
    \frac{2}{(k+1)(k+2)}\sum_{j=1}^k m_j.
\]
Finally, $\beta(\tree_j)\ge\beta_j^-$ almost surely for every $j$, by definition of $\beta_j^-$. Hence, 
\[
    m_j=\EE\sqrt{\beta(\tree_j)}
    \ge\sqrt{\beta_j^-}.
\]
Substituting this into the preceding display completes the result. 
\end{proof}

\subsection{Lower bound on fluctuations}

Finally, we prove $\sigma_H^2$, and hence $\sigma^2$, is strictly positive. We need the following estimate:
\begin{lemma}[Lower slope of the square-root recursion]
\label{lem:sqrt-rec-lower-slope}
For \(x,y\ge0\), set
\[
    \Psi(x,y)
    := \sqrt{G(x^2,y^2)}.
\]
Then, for all \(x,y,h\ge0\),
\[
    \Psi(x+h,y)-\Psi(x,y)\ge \frac{x}{x+1}h,
    \qquad
    \Psi(x,y+h)-\Psi(x,y)\ge \frac{y}{y+1}h .
\]
\end{lemma}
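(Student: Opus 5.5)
The plan is to differentiate $\Psi$ and integrate the derivative bound from $x$ to $x+h$; the $y$-statement is symmetric. The derivative of $G$ comes from the characterization in Proposition~\ref{prop:F-root}. With $p=x^2$, $q=y^2$, $G(p,q)=v_*$ is the unique root of
\[
F_{p,q}(v)=\int_0^1\big(v-\phi_{p,q}(u)\big)_+\,du=1 .
\]
The map $(v,p)\mapsto F_{p,q}(v)$ is $C^1$ near the root: the integrand is Lipschitz in $(v,p)$, and the boundary of $I_{v_*}$ has measure zero. Its partial derivatives are
\[
\partial_v F=|I_{v_*}|=b-a,\qquad \partial_p F=-\int_a^b \frac{du}{u}.
\]
The implicit function theorem (or, alternatively, the envelope theorem applied to $\min_{a<b} G_{p,q}(a,b)$) then gives
\[
\partial_p G=\frac{1}{b-a}\int_a^b\frac{du}{u}=\EE[1/U],\qquad U\sim\mathrm{Unif}(a,b).
\]
Hence $\partial_x\Psi=x\,\EE[1/U]/\Psi$, and the claim reduces to the pointwise inequality
\[
(x+1)\,\EE[1/U]\ \ge\ \Psi .
\]

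For the case $x,y>0$ (both children present), I would use the convexity bound $\EE[1/U]\ge 1/\EE U=2/(a+b)$. The stationarity conditions \eqref{eq:opt-endpoints} say that $a,b$ are the two roots of
\[
B u(1-u)=p(1-u)+qu,\qquad\text{i.e.}\qquad Bu^2+(q-p-B)u+p=0 ,
\]
so $a+b=(B+p-q)/B$ with $B=\Psi^2$. The needed inequality becomes $\Psi^2+x^2-y^2\le 2(x+1)\Psi$, which is equivalent to
\[
(\Psi-x-1)^2\le y^2+2x+1 .
\]
This follows from the two-sided bound $x+y\le\Psi\le x+y+1$ of Lemma~\ref{lem:eta bounded} (the lower bound is \eqref{eq:G1}). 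If $\Psi\ge x+1$, then $0\le\Psi-x-1\le y$. If $\Psi<x+1$, then $0\le x+1-\Psi\le 1-y\le 1$ (using $\Psi\ge x+y$, which also forces $y<1$ here). In either case $(\Psi-x-1)^2\le\max(y^2,1)\le y^2+2x+1$.

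The boundary cases need separate care. When $x=0$ the bound $\tfrac{x}{x+1}h=0$ is just monotonicity of $G$. When $y=0$ but $x>0$, the optimizer has $b=1$, so the two-root formula fails, and this is the step I expect to be the main obstacle. I would first try continuity: $\Psi$ is continuous in $y$ (monotone, and $G$ is jointly continuous), so the inequality $\Psi(x+h,y)-\Psi(x,y)\ge\frac{x}{x+1}h$, proved for $y>0$, passes to the limit $y\downarrow0$. This avoids differentiating at the degenerate point altogether.

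Finally, on $[x,x+h]$ the function $\Psi(\cdot,y)$ is locally Lipschitz (indeed $C^1$ wherever the argument is positive). Integrating $\partial_x\Psi(s,y)\ge\frac{s}{s+1}\ge\frac{x}{x+1}$ over $s\in[x,x+h]$ gives the claim. The regularity of $v_*$ in $p$ is routine but must be checked, so it is the other point I would verify carefully.
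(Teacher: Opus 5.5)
Your proof is correct, and its skeleton matches the paper's. Both compute $\partial_x\Psi = x\log(b/a)/((b-a)\Psi)$ (your $x\,\mathbb{E}[1/U]/\Psi$). Both use the log-mean/Jensen step $\log(b/a)/(b-a)\ge 2/(a+b)$. Both then reduce to the single inequality $\Psi(x,y)(a+b)\le 2(x+1)$ before integrating in $x$.

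The genuine difference is how you prove that inequality. The paper uses the normalization $\int_a^b(v-\phi)=1$ with $x^2=vab$, $y^2=v(1-a)(1-b)$, and an integral estimate after substituting $t=s^2$. This gives $\Psi(\sqrt b-\sqrt a)^2\le\sqrt3$, hence $\Psi(a+b)=2x+\Psi(\sqrt b-\sqrt a)^2\le 2x+2$. That argument is self-contained and covers $y=0$ with no change. You instead use Vieta, $a+b=1+(x^2-y^2)/\Psi^2$, together with the a priori bracket $x+y\le\Psi\le x+y+1$, which reduces everything to $(\Psi-x-1)^2\le y^2+2x+1$. This is shorter and purely algebraic, but it imports Lemma~\ref{lem:eta bounded}. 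You need that lemma in its real-variable form: its proof gives $G(p^2,q^2)\le(p+q+1)^2$ for all real $p,q\ge0$, and this matters because the intermediate points $s\in[x,x+h]$ are not values of $\sqrt{\beta}$ on trees.

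On regularity, your implicit-function-theorem argument gives genuine $C^1$ dependence on $x$. That is cleaner than the paper's one-sided envelope bound, which relies on continuity of the optimizer in $h$.

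The case $y=0$ is not actually an obstacle for you. With $q=0$ the quadratic factors as $Bu^2-(p+B)u+p=(u-1)(Bu-p)$, whose roots are exactly $a=p/B$ and $b=1$. So the Vieta identity, the bracket, and the IFT computation all go through verbatim. Your continuity argument also works, but it needs the easy fact that $q\mapsto G(p,q)$ is continuous at $q=0$. This follows from monotonicity plus upper semicontinuity of $G$, viewed as an infimum over $0<a<b<1$ of functions affine in $(p,q)$.
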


The proof is a calculus argument, given in the appendix. 

\begin{lemma}
\label{lem:positive-H-variance}
The parameters $\sigma^2$ and $\sigma_H^2$ from \cref{thm:HJ CLT} are both strictly positive. 
\end{lemma}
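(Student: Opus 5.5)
Since $\sigma^2=4c_\typ\sigma_H^2$ and $c_\typ\ge 1/4>0$ by \cref{thm:numerical-scaling}, it suffices to show $\sigma_H^2>0$. By \cref{thm:HJ CLT}, $\pVar{H_k}=\sigma_H^2k+o(k)$, so the goal is an inductive lower bound $\pVar{H_k}\ge c\,k$ for some $c>0$ and all $k\ge k_0$.

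The plan is to condition on the root split. Let $\tree_k$ be a random BST, and write $x_L:=\sqrt{\beta(\ltree)}$ and $x_R:=\sqrt{\beta(\rtree)}$, so that $H_k=\Psi(x_L,x_R)$. By the law of total variance,
\[
\pVar{H_k}\ \ge\ \EE\big[\pVar{H_k\mid |\ltree|}\big].
\]
Conditionally on $|\ltree|=i$, the subtrees $\ltree$ and $\rtree$ are independent random BSTs of sizes $i$ and $k-1-i$. For a function of two independent variables, the Efron--Stein/ANOVA decomposition gives
\[
\pVar{\Psi(x_L,x_R)}\ \ge\ \pVar{\EE[\Psi\mid x_L]}+\pVar{\EE[\Psi\mid x_R]}.
\]
We bound the first term; the second is symmetric. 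The key input is \cref{lem:sqrt-rec-lower-slope} together with the fact that $\Psi$ is nondecreasing in each argument. Fix $y$ and set $g(x):=\Psi(x,y)$. For $x\ge x_0$ the function $g$ is increasing with slope at least $x_0/(x_0+1)$. Hence if $x$ and $x'$ are two independent copies of $x_L$, then $|g(x)-g(x')|\ge \frac{x_0}{x_0+1}|x-x'|$ whenever $x,x'\ge x_0$. Averaging over $y$ preserves this, because the slope bound is uniform in $y$. We therefore get
\[
\pVar{\EE[\Psi\mid x_L]}=\tfrac12\EE\big(h(x)-h(x')\big)^2\ \ge\ \big(\tfrac{x_0}{x_0+1}\big)^2\pVar{x_L}\quad\text{(up to a truncation correction)},
\]
where $h(x):=\EE[\Psi(x,x_R)]$ is the conditional mean. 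Since $x_L=\sqrt{\beta}\ge|\ltree|/2$ by $\beta\ge k^2/4$, we have $x_L\ge i/2$ deterministically. So for $i\ge 1$ we may take $x_0=i/2$ and obtain the factor $(1-\frac{2}{i+2})^2$ with no truncation needed. Writing $V_i:=\pVar{H_i}$, this yields
\[
V_k\ \ge\ \frac1k\sum_{i=0}^{k-1}\Big[\big(1-\tfrac{2}{i+2}\big)^2V_i+\big(1-\tfrac{2}{k-i+1}\big)^2V_{k-1-i}\Big]\ \ge\ \frac2k\sum_{i=0}^{k-1}\Big(1-\tfrac{4}{i+2}\Big)V_i .
\]

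The main obstacle is closing this recursion: we need to show that it forces linear growth rather than allowing $V_k\to0$. The deterministic fact $\beta_3$-variance $>0$ from the base case gives $V_3>0$, but the pure recursion $V_k\ge\frac2k\sum(1-\frac4{i+2})V_i$ only preserves an order-$k$ lower bound up to multiplicative losses $\prod(1-O(1/i))$. Those losses could kill the linear rate. To handle this, I would add a nonnegative source term: for every $k\ge3$, with probability $\ge c/k\cdot(\text{const})$ the root has a subtree of size exactly $3$, and the $V_3$ contribution then enters with slope factor bounded below. I would try the ansatz $V_k\ge c\,k-C'$. Plugging it in, the source gives a contribution $\ge c_1$ per level, matching the additive structure of fringe functionals; indeed the expected number of fringe subtrees of size 3 is linear in $k$. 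Concretely, I would instead run the Efron--Stein lower bound over all size-3 fringe subtrees at once. Conditional on the shape outside them, these subtrees are independent uniform shapes. Resampling one of them perturbs $H_k$ by at least (variation at that node) $\times\prod$ of slopes along the path to the root, by \cref{lem:sqrt-rec-lower-slope}. The product of $(1-1/(x+1))$ factors along the ancestors, whose sizes grow along the path, is bounded below by a constant provided the subtree sizes along the path grow at least geometrically. The remaining task is to verify that a positive fraction of the size-3 fringe subtrees have this property. This gives $\pVar{H_k}\gtrsim k$, hence $\sigma_H^2>0$.
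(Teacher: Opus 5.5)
\textbf{There is a genuine gap: the argument you end with is unfinished, and the one you discard already proves the lemma.}

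Your first half is essentially the paper's argument, and it correctly gives
\[
V_k \ \ge\ \frac{2}{k}\sum_{i=0}^{k-1} w_i V_i, \qquad w_i := \Big(\frac{i}{i+2}\Big)^2 .
\]
The paper differs only cosmetically: it uses total variance conditioned on one subtree instead of the two-sided projection bound. It also uses the cruder slope $\sqrt{i}/(\sqrt{i}+1)$, coming from $H_i \ge \sqrt{i}$.

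You then drop this recursion, believing the losses $1-w_i = O(1/i)$ compound and can destroy linear growth. That belief is false. The replacement fringe-subtree argument leaves its decisive step unproved: that a positive fraction of size-$3$ fringe subtrees have geometrically growing ancestor sizes, so that the product of slopes to the root stays bounded below. That step is not obvious. The tool there is also misnamed: Efron--Stein only gives upper bounds on variance. A lower bound needs the first-order projection inequality $\mathrm{Var}(F) \ge \sum_j \mathrm{Var}(\mathbb{E}[F \mid Z_j])$, combined with monotonicity of $\Psi$ to bound $\mathbb{E}[F\mid Z_j]$ between the two shapes. Your "source term" heuristic also contributes only $O(1/k)$ per level, not a constant. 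As written, the proposal does not prove the lemma.

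\textbf{How to close your recursion.} Set $R_k := \sum_{i \le k} w_i V_i$. Then $V_k \ge 2R_{k-1}/k$, so
\[
R_k = R_{k-1} + w_k V_k \ \ge\ R_{k-1}\Big(1 + \frac{2w_k}{k}\Big).
\]
Since $2w_m/m \ge 2/m - 8/m^2$, the correction is summable and $\prod_{m=4}^{k}(1+2w_m/m) \ge c k^2$. Hence $R_k \ge c R_3 k^2$ and $V_k \ge 2R_{k-1}/k \ge c' k$, provided $R_3 = w_3 V_3 > 0$.

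A per-step loss $1-w_m$ affects $\log R_k$ only through $\sum_m (1-w_m)/m$. That sum converges even for the paper's weaker $1-w_m = O(m^{-1/2})$. This is exactly how the paper concludes.

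\textbf{Base case.} You must also prove $V_3>0$ rather than assert it. Since $V_0=V_1=V_2=0$, it is the only seed for the recursion. You need the balanced tree and the three-node path to have different $\beta$. The margin is thin: the paper shows
\[
G(1,1) \le \frac{1+2\log 4}{3/5} < \frac{127}{20} < G\Big(\frac{157}{50},0\Big) < G(G(1,0),0),
\]
using Lemma~\ref{lem:G-boundary} and monotonicity of $G$. The crude bounds from Lemma~\ref{lem:G-boundary-lower} and Lemma~\ref{lem:eta bounded} are not sharp enough to separate these two values.
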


\begin{proof}
For \(k\ge0\), let
\(\tree_k\) be a random BST on \(k\) nodes and, just as in the proof of \cref{thm:main-random}, set $H_k:=\sqrt{\beta(\tree_k)}$ and $V_k:=\pVar{H_k}$
with \(H_0=V_0=0\).

We first note that \(V_3>0\). Indeed, a random BST on three nodes has positive
probability to be balanced and positive probability to be a path. The corresponding \(\beta\)-values are distinct. For the balanced tree,
\[
    \beta_{\rm bal}=G(1,1)
    \le
    \frac{1+2\log 4}{3/5}.
\]
On the other hand, using the boundary formula for \(G(p,0)\) from
\cref{lem:G-boundary},
\[
    G(1,0)>\frac{157}{50},
    \qquad
    G\left(\frac{157}{50},0\right)>\frac{127}{20}.
\]
Thus, by monotonicity of $G$ in both of its arguments,
\[
    \beta_{\rm path}
    =
    G(G(1,0),0)
    >
    G\left(\frac{157}{50},0\right)
    >
    \frac{127}{20}
    >
    \frac{1+2\log 4}{3/5}
    \ge \beta_{\rm bal}.
\]
Hence \(H_3\) is not supported on a single point, and so \(V_3>0\). 

We use $V_3 > 0$ as a base case in the recurrence that we now develop for $V_k$. Let
\[
    \alpha_i:=\frac{\sqrt{i}}{\sqrt{i}+1},\qquad i\ge0.
\]
Note the trivial bound that for any fixed tree $S$, \(\beta(S)\ge |S|\). Hence
\(H_i\ge \sqrt{i}\) almost surely. Lemma~\ref{lem:sqrt-rec-lower-slope} 
implies that, for $x\geq y\geq \sqrt{i}$,
the map \(x\mapsto \Psi(x,y)\) has lower slope at least \(\alpha_i\),
uniformly in \(y\). The same statement holds in the second coordinate.
We need the elementary fact that if \(f\) is non-decreasing and $f(u)-f(v)\ge \alpha(u-v)$ for every $u \ge v$ in the support of a random variable \(X\), then
\[
    \pVar{f(X)}\ge \alpha^2\,\pVar{X}.
\]
Indeed, letting $X'$ be an independent copy of $X$, we have 
\begin{equation}\label{eq:var-ineq}\pVar{f(X)}
= \frac12\E{(f(X)-f(X'))^2}\ge \frac{\alpha^2}{2}\E{(X-X')^2} = \alpha^2\pVar{X}.
\end{equation}
So, let \(X:=H_i\) and \(Y:=H'_j\), where \(H'_j\) is an independent copy of
\(H_j\). Then, we claim:
\[
\begin{aligned}
    \pVar{\Psi(X,Y)}
    &=
    \E{\pVar{\Psi(X,Y)\mid Y}}
    +
    \pVar{\E{\Psi(X,Y)\mid Y}}  \\
    &\ge
    \alpha_i^2V_i+\alpha_j^2V_j .
\end{aligned}
\]
The equality follows from the law of total variance. In the inequality, the first term is lower bounded by $\alpha_i^2V_i$ using \eqref{eq:var-ineq} and \cref{lem:sqrt-rec-lower-slope}. Finally, for the second term, define \(g(y):=\E{\Psi(X,y)}\), where the expectation is
over \(X\). If \(y_2\ge y_1\) are in the support of \(Y\), then by \cref{lem:sqrt-rec-lower-slope}
\[
    g(y_2)-g(y_1)
    =
    \E{\Psi(X,y_2)-\Psi(X,y_1)}
    \ge
    \alpha_j(y_2-y_1).
\]
Hence, applying \eqref{eq:var-ineq} to \(g(Y)\) yields
\[
    \pVar{\E{\Psi(X,Y)\mid Y}}
    =
    \pVar{g(Y)}
    \ge
    \alpha_j^2V_j.
\]
The claimed lower bound on $\pVar{\Psi(X,Y)}$ follows. Next, let $L := L_\pi$ denote the left subtree of the random permutation $\pi$. Note that \(|L|\) is uniform on \(\{0,\ldots,k-1\}\). Moreover, conditional on
\(|L|=i\),
\[
    H_k\stackrel{d}{=}\Psi(H_i,H'_{k-1-i}),
\]
where the two random variables on the right are independent. Hence, by the law of total variance,
\begin{align*}
    V_k \ge
    \E{\pVar{H_k\mid |L|}}  &=
    \frac1k\sum_{i=0}^{k-1}
    \pVar{\Psi(H_i,H'_{k-1-i})} \\ &\ge
    \frac1k\sum_{i=0}^{k-1}
    \left(\alpha_i^2V_i+\alpha_{k-1-i}^2V_{k-1-i}\right) =
    \frac{2}{k}\sum_{i=0}^{k-1}\alpha_i^2V_i .
\end{align*}
Set $R_k:=\sum_{i=0}^k \alpha_i^2V_i$. Since \(V_3>0\), we have \(R_3>0\). The preceding inequality gives $V_k\ge 2R_{k-1}/k$.
Hence, for \(k\ge4\),
\[
    R_k
    =
    R_{k-1}+\alpha_k^2V_k
    \ge
    R_{k-1}\left(1+\frac{2\alpha_k^2}{k}\right).
\]
Using the inequality $2\alpha_m^2/m = 2/(\sqrt{m}+1)^2 \ge 2/m-4/m^{3/2}$, valid for $m\ge 1$, it holds for some absolute constants $c,c' > 0$,
\[
    \prod_{m=4}^k
    \left(1+\frac{2\alpha_m^2}{m}\right) \ge \prod_{m=4}^k\left(1+ \frac{2}{m}-\frac{4}{m^{3/2}}\right)
    \ge c k^2\,,
\]
and hence \(R_k\ge c'k^2\). Recalling $V_k \ge 2R_{k-1}/k$, it holds for some other absolute constant $c''>0$ that $V_k \ge c''k$
for all sufficiently large \(k\). Therefore
\[
    \liminf_{k\to\infty}\frac{V_k}{k}>0.
\]
Since \cref{thm:HJ CLT} gives \(V_k=\sigma_H^2k+o(k)\), it follows that
\(\sigma_H^2>0\). In particular, $\sigma^2 = 4c_\typ \sigma_H^2 > 0$.
\end{proof}

\section{Scaling limits: dynamic programs and additivity}\label{sec:scaling}

We recall for convenience the following putative definitions, which we shortly will prove are well-posed:
\begin{align*}
    c_\typ = \,\lim_{k \to \infty} \frac{\beta_k^\typ}{k^2},& \qquad \beta_k^{\typ} \,= \E[\pi \sim \mathrm{Unif}(\Pi_k)]{\beta(\pi)} \\
    c_+ = \lim_{k \to \infty} \frac{\beta_k^+}{k^2},& \qquad \beta_k^+ = \max_{\pi \in \Pi_k} \beta(\pi) \\
    c_- = \lim_{k \to \infty} \frac{\beta_k^-}{k^2},& \qquad \beta_k^- = \min_{\pi \in \Pi_k} \beta(\pi)
\end{align*}
We also define an auxiliary sequence for upper bounding $c_\typ$ and $\beta_k^\typ$: letting $\gamma_0 = 0$ and $\gamma_1 = 1$, 
\beq{eq:gamdef}
    \gamma_k = \frac{1}{k} \sum_{i=1}^k G\left(\gamma_{i-1},\,\gamma_{k-i}\right)\,.
\enq
The main result of this section establishes the existence of these putative limits and provides bounds on the limits in terms of finite expressions.

\begin{lemma}[Finite estimation of scaling constants]\label{lem:subsupadd}
    The constants $c_\typ$, $c_+$, and $c_-$ exist. Moreover, they admit the following estimates: for every $k \ge 1$, 
    \begin{enumerate}
        \item\label{it:betagamma} 
        \beq{eq:betagamma}
        \Big(\frac{2}{(k+1)(k+2)}\sum_{j=1}^k \sqrt{\beta_j^-}\Big)^2 \leq \Big(\frac{2}{(k+1)(k+2)}\sum_{j=1}^k \sqrt{\beta_j^\typ}\Big)^2 \leq c_{\typ} \leq \frac{\beta_k^{\typ}}{k^2} \leq \frac{\gamma_k}{k^2}.
        \enq
        \item\label{it:c+} 
        \begin{equation}\label{eq:cplus-ineqs}
             \frac{\beta_k^+}{(k+1)^2} \le c_+ \le \frac{\beta_k^+}{k^2}\,.
        \end{equation}

        \item\label{it:c-} 
        \begin{equation}\label{eq:cminus-ineqs}
             \frac14 \leq c_- \le \frac{\beta_k^-}{k^2}\,.
        \end{equation}
    \end{enumerate}
\end{lemma}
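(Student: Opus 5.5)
The plan is to prove existence of all three limits through a single sub-additivity mechanism. The finite bounds then follow from Fekete-type arguments together with the estimates already established in Section 3.

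\textbf{Splitting construction.} Fix $\pi\in\Pi_k$, a split level $\ell\in[k-1]$, and a point $\alpha\in(0,1)$. Let $\pi_{\le}$ and $\pi_{>}$ be the patterns induced on the values $\le \ell$ and $>\ell$. Run the optimal algorithms for $\pi_{\le}$ on the thinned stream in $[0,\alpha]$ and for $\pi_{>}$ on the thinned stream in $[\alpha,1]$, each rescaled. An element of $\pi$ is embedded as soon as its own sub-algorithm embeds it; the sub-algorithms use disjoint parts of the stream, so this is legitimate. Since $\max\le$ sum, a Wald/thinning computation as in Lemma~\ref{lemma:fix-rec-UB} gives
\[
\beta(\pi)\le \frac{\beta(\pi_{\le})}{\alpha}+\frac{\beta(\pi_{>})}{1-\alpha}.
\]
Optimizing over $\alpha$ yields $\beta(\pi)\le(\sqrt{\beta(\pi_\le)}+\sqrt{\beta(\pi_>)})^2$.

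\textbf{Consequences of the splitting bound.}
\begin{itemize}
\item For $\beta^\pm$: for every $k=m+n$, $\sqrt{\beta^+_{m+n}}\le\sqrt{\beta^+_m}+\sqrt{\beta^+_n}$. To get this for $\beta^-$, choose $\pi$ as the concatenation of the two minimizers shifted in value.
\item For $\beta^\typ$: for uniform $\pi$ with $\ell=m$, the two induced patterns are independent and uniform on $\Pi_m$ and $\Pi_n$. With $\alpha=m/(m+n)$ fixed, taking expectations gives $\beta^\typ_{m+n}\le \frac{m+n}{m}\beta^\typ_m+\frac{m+n}{n}\beta^\typ_n$. That is, $\beta^\typ_k/k$ is subadditive. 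Since $\beta^\typ_k/k\le k$ (from $\beta\le k^2$), Fekete applied to $\beta^\typ_k/k$ gives a limit of $\beta^\typ_k/k^2$ only up to the quadratic normalization, so I would rather use $\alpha$ fixed and the homogeneity structure directly.
\end{itemize}
My actual plan is the following. Show $k\mapsto \beta_k/k$ is subadditive, and hence that $\beta_{tk}\le t^2\beta_k$ for integers $t$, by splitting into $t$ equal blocks with $\alpha$ intervals of length $1/t$. Then interpolate for general $n$ using monotonicity in $k$, which holds because $G$ is monotone and a $k$-pattern contains $(k-1)$-patterns in an embedding sense. This yields $\limsup \beta_n/n^2\le \beta_k/k^2$ for every $k$, hence the limit equals $\inf_k\beta_k/k^2$. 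This gives existence together with the upper bounds $c\le\beta_k/k^2$ for $\beta^\pm$ and $\beta^\typ$. For $\beta^-$ the $t$ blocks are the same minimizer, and for $\beta^+$ the bound holds for every pattern.

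\textbf{Remaining inequalities.}
\begin{itemize}
\item $\beta^\typ_k\le\gamma_k$: condition on $|L|=i-1$, which is uniform. Given this, $L$ and $R$ are independent uniform. By Theorem~\ref{thm:recursion-formula}, $\beta=G(\beta(L),\beta(R))$. Since $G$ is a minimum of functions that are affine in $(p,q)$, it is concave, so Jensen gives $\EE G\le G(\EE\beta(L),\EE\beta(R))\le G(\gamma_{i-1},\gamma_{k-i})$ by induction and monotonicity.
\item The left inequalities in \eqref{eq:betagamma} are exactly the earlier proposition \eqref{eq:ctyp-LB}, once existence of $c_\typ$ is in hand.
\item For $c_+$: show $\sqrt{\beta^+_{k-1}}$ is superadditive in $k$. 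Taking $\pi$ whose root has maximizing left and right subtrees of sizes $m-1$ and $n-1$, Lemma~\ref{lem:G-boundary-lower} gives $\sqrt{\beta^+_{m+n-1}}\ge\sqrt{\beta^+_{m-1}}+\sqrt{\beta^+_{n-1}}$. Fekete then gives $c_+=\sup_k\beta^+_{k-1}/k^2$, i.e. $\beta^+_k/(k+1)^2\le c_+$.
\item $c_-\ge 1/4$: prove $\beta(\pi)\ge k^2/4$ by induction. Here \eqref{eq:G1} gives $\sqrt\beta\ge\sqrt{\beta_L}+\sqrt{\beta_R}$, which is short by $1/2$ per node. The main obstacle is gaining the extra $1/2$ in the two-child case. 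I would try proving $\sqrt{G(p,q)}\ge\sqrt p+\sqrt q+1/2$ directly from the first-order conditions \eqref{eq:opt-endpoints} together with \eqref{eq:integral-bound} and \eqref{eq:B_Delta-bound}; this gives $B-s^2\ge 1/\Delta$ with $\Delta\le\sqrt{(B-s^2)/B}$, so $B-s^2\ge\sqrt B$. This in turn gives $\sqrt B\ge s+1/2$ after rearranging, combined with \eqref{eq:G2} for one-child nodes.
\end{itemize}
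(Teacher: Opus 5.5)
Your proposal has a genuine gap in the proof of $c_-\ge\frac14$. The inequality $\sqrt{G(p,q)}\ge\sqrt p+\sqrt q+\frac12$ for $p,q>0$ is false. Take $p=q$. Lemma~\ref{lem:G-interior} gives $G(p,p)=2p+2p\cosh u$ with $2p(\sinh u-u)=1$. Hence $u\sim(3/p)^{1/3}$ and $G(p,p)=4p+(3^{2/3}+o(1))\,p^{1/3}$, so $\sqrt{G(p,p)}-2\sqrt p\to 0$; at $p=q=100$ one gets $\sqrt{G}\approx 20.24<20.5$.

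The error is in your algebra. With $D:=B-s^2$, the facts $D\ge 1/\Delta$ and $\Delta\le\sqrt{D/B}$ give $D\ge\sqrt{B/D}$, i.e.\ $D^3\ge B$, not $D\ge\sqrt B$. The order $D\asymp B^{1/3}$ is actually the true one, so no constant gain is available at two-child nodes, and your induction on $\beta(\pi)\ge k^2/4$ cannot close there.

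The fix is to strengthen the induction hypothesis rather than the kernel estimate: prove $\beta(\pi)\ge (k+1)^2/4$, which is what the paper does.
\begin{itemize}
\item The base case is $\beta=1$.
\item At a two-child node, \eqref{eq:G1} alone gives $\sqrt{\beta}\ge \frac{|L|+1}{2}+\frac{|R|+1}{2}=\frac{k+1}{2}$.
\item At a one-child node, \eqref{eq:G2} supplies exactly the missing $\frac12$.
\end{itemize}

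Your splitting construction, as described, is not a valid online embedding. Embedding each element ``as soon as its own sub-algorithm embeds it'' can violate the position order. For example, with $\pi=(1,2)$ and $\ell=1$, the upper sub-algorithm may accept before the lower one, which produces the pattern $21$. You must instead run the sub-algorithms sequentially in the order of positions of $\pi$. Only the sub-algorithm owning the next position is active, and arrivals on the other side are discarded, as in Lemma~\ref{lemma:fix-rec-UB} and the appendix proof of \eqref{eq:osalph}. Wald's identity then gives exactly $\mathbb{E}T_{\le}/\alpha+\mathbb{E}T_{>}/(1-\alpha)$. So your inequality is correct, but the justification is not ``$\max\le$ sum''.

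Your hesitation about Fekete is unfounded. Applying Fekete to $a_k=\xi_k/k$ gives $\lim a_k/k=\inf a_k/k$, which is precisely $\lim \xi_k/k^2=\inf \xi_k/k^2$; this is the paper's argument. Your block-plus-monotonicity argument is a valid but redundant re-derivation of it.

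The remaining parts match the paper:
\begin{itemize}
\item $\beta^{\typ}_k\le\gamma_k$ via concavity of $G$ and Jensen;
\item the left bounds via \eqref{eq:ctyp-LB}, once $c_{\typ}$ is known to exist;
\item the shifted superadditivity of $\sqrt{\beta^+_{k-1}}$ for $c_+$.
\end{itemize}
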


We begin by expressing $\beta_k^\pm$ with dynamic programs.

\subsection{The DP for finding the extreme values}

The following recursive formulas are the basis of \cref{lem:subsupadd}.

\begin{proposition}
\beq{eq:beta+}
    \beta_k^+ = \max_{i \in [k]}~G\left(\beta_{i-1}^+,\,\beta^+_{k-i}\right)\,, \quad \text{ and } \quad \beta^+_0 := 0\,.
\enq
and 
\beq{eq:beta-}
    \beta_k^- = \min_{i \in [k]}~G\left(\beta_{i-1}^-,\,\beta^-_{k-i}\right)\,, \quad \text{ and } \quad \beta^-_0 := 0\,.
\enq
\end{proposition}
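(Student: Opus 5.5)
The plan has three ingredients: Theorem~\ref{thm:recursion-formula}, monotonicity of $G$ in each argument, and the fact that the left and right subpermutations of a permutation can be chosen independently. Monotonicity is immediate from \eqref{eq:def-G1}. For fixed $(a,b)$, the quantity $G_{p,q}(a,b)$ is non-decreasing in $p$ and in $q$, because $\log(b/a)\ge 0$ and $\log((1-a)/(1-b))\ge0$. Taking the minimum over $(a,b)$ preserves this. A minor check is the convention at the boundary where a term is dropped because $p=0$. Here $G_{0,q}(a,b)\le G_{p,q}(a,b)$ holds since the omitted term is non-negative, or $+\infty$ when $a=0$. So monotonicity holds on all of $[0,\infty)^2$, including at $0$.

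Next I would record the surjectivity fact. For $i\in[k]$, the map $\pi\mapsto (L(\pi),R(\pi))$ restricted to $\{\pi\in\Pi_k:\pi(1)=i\}$ is onto $\Pi_{i-1}\times\Pi_{k-i}$. Given $\sigma\in\Pi_{i-1}$ and $\rho\in\Pi_{k-i}$, take
\[
\pi=(i,\sigma(1),\dots,\sigma(i-1),\rho(1)+i,\dots,\rho(k-i)+i),
\]
which is exactly the concatenation used in Remark~\ref{rem:tree-shape-invariance}. This $\pi$ has $L(\pi)=\sigma$ and $R(\pi)=\rho$. When $i=1$ or $i=k$, one side is empty, and this matches $\beta_0^\pm=\beta(\emptyset)=0$.

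For \eqref{eq:beta+}, the upper bound comes from Theorem~\ref{thm:recursion-formula}. Any $\pi\in\Pi_k$ with $\pi(1)=i$ satisfies
\[
\beta(\pi)=G(\beta(L(\pi)),\beta(R(\pi)))\le G(\beta^+_{i-1},\beta^+_{k-i})
\]
by monotonicity, and maximizing over $\pi$ bounds $\beta_k^+$ by the right-hand side. For the lower bound, fix $i$ and choose maximizers $\sigma\in\Pi_{i-1}$ and $\rho\in\Pi_{k-i}$. The concatenated $\pi$ above then has $\beta(\pi)=G(\beta^+_{i-1},\beta^+_{k-i})$, so $\beta_k^+$ is at least this value for every $i$. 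The minimum version \eqref{eq:beta-} follows by the same argument with the inequalities reversed and minimizers in place of maximizers.

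The only real subtlety is the monotonicity of $G$ at the boundary conventions, which is settled in the first step. Finiteness of $G$ is not an issue, since $G(p,q)<\infty$ by taking $(a,b)$ interior.
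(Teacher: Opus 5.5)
Your proposal is correct and is essentially the paper's proof. Both use the recursion $\beta(\pi)=G(\beta(L(\pi)),\beta(R(\pi)))$, coordinatewise monotonicity of $G$, and the fact that $\pi\mapsto(L(\pi),R(\pi))$ maps $\{\pi\in\Pi_k:\pi(1)=i\}$ onto $\Pi_{i-1}\times\Pi_{k-i}$. Your explicit concatenation witness and your check of monotonicity under the boundary conventions fill in details the paper only asserts here; it proves monotonicity separately in \cref{prop:G-monotone}.
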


\begin{proof}
We focus on \eqref{eq:beta+} as the proof of \eqref{eq:beta-} proceeds symmetrically. We have 
\begin{align*}\beta^+_k = \max_{\pi \in \Pi_k} \beta(\pi) &= \max_{i \in [k]}\max_{\pi \in \Pi_k: \pi(1) = i} \beta(\pi)\\ 
&=\max_{i \in [k]}\max_{\pi \in \Pi_k: \pi(1) = i}G(\beta(L(\pi)),\beta(R(\pi)))\,,
\end{align*}
where the last equality uses \eqref{eq:rec fixed}. Recall $G$ is non-decreasing in both arguments. Moreover, for each
$i\in[k]$, for every pair
\[
    (\sigma,\rho)\in \Pi_{i-1}\times \Pi_{k-i},
\]
there is some $\pi\in\Pi_k$ with $\pi(1)=i$ and $(L(\pi),R(\pi)) = (\sigma, \rho)$.
Thus the maximum over $\{\pi\in\Pi_k:\pi(1)=i\}$ is attained by choosing
$\sigma$ and $\rho$ to maximize $\beta$ on $\Pi_{i-1}$ and $\Pi_{k-i}$,
respectively. Therefore,

\begin{align*}\beta^+_k  
&=\max_{i \in [k]}\max_{\pi \in \Pi_k: \pi(1) = i}G\left(\beta_{i-1}^+,\beta_{k-i}^+\right) =\max_{i \in [k]}G\left(\beta_{i-1}^+,\beta_{k-i}^+\right)\,.\qedhere
\end{align*}
\end{proof}

\subsection{Proof of \cref{lem:subsupadd}} 

Let $G(p,q)$ be as defined in \eqref{eq:def-G1}. By evaluating the integrals in \eqref{eq:gamdef}, \eqref{eq:beta+}, and \eqref{eq:beta-}, we find that $\gamma_0 = \beta^+_0 = \beta^-_0 = 0$, $\gamma_1 = \beta^+_1 = \beta^-_1 = 1$, and
\beq{eq:eval}\gamma_k = \frac1k\sum_{j = 1}^k G(\gamma_{j-1}, \gamma_{k-j}), \qquad \beta^+_k = \max_{j \in [k]}G(\beta^+_{j-1}, \beta^+_{k-j}), \qquad  \beta^-_k = \min_{j \in [k]}G(\beta^-_{j-1}, \beta^-_{k-j}).\enq

\begin{proof}[Proof of existence of $c_-$, $c_+$ and $c_{\typ}$]
We now show that the constants $c_-$, $c_+$ and $c_{\typ}$ exist with a sub-additivity argument. Fix a permutation $\pi \in \Pi_k$, $\ell \in [k-1]$, and $\alpha \in (0,1)$. Consider the optimal embedding strategy for $\pi$ subject to the additional constraint that the lowest $\ell$ elements of $\pi$ are embedded in the interval $[0,\alpha]$ and the remaining elements of $\pi$ are embedded in the interval $[\alpha, 1]$. We claim that if $\alg^{\leq \ell}$ is an embedding strategy for $\pi^{\leq \ell}$, the permutation with the same order type as the sequence $(\pi(i))_{i:\pi(i) \leq \ell}$, and $\alg^{> \ell}$ is an embedding strategy for $\pi^{> \ell}$, the permutation with the same order type as the sequence $(\pi(i))_{i:\pi(i) > \ell}$, then
\beq{eq:osalph}
    \beta(\pi) \leq \alpha^{-1}\EE T_{\alg^{\leq \ell}} + (1 - \alpha)^{-1}\EE T_{\alg^{> \ell}}.
\enq
As the proof of \eqref{eq:osalph} is essentially the same as the proof of \cref{lemma:fix-rec-UB}, proceeding via a direct thinning/time-change argument, its proof is given in the appendix. Taking the infimum over embedding strategies for $\pi^{\leq \ell}$ and $\pi^{> \ell}$ in \eqref{eq:osalph} gives
\beq{eq:alph}
    \beta(\pi)
    \leq
    \alpha^{-1}\beta(\pi^{\leq \ell})
    +
    (1-\alpha)^{-1}\beta(\pi^{>\ell}).
\enq
We now prove subadditivity for the three sequences
$(\beta_k^\typ)_{k\ge1}$, $(\beta_k^+)_{k\ge1}$, and
$(\beta_k^-)_{k\ge1}$. Fix $m,n\ge1$ and apply \eqref{eq:alph} with $\alpha := m/(m+n)$. Then, every $\pi\in\Pi_{m+n}$ satisfies
\[
    \beta(\pi)
    \le
    \frac{m+n}{m}\,\beta(\pi^{\le m})
    +
    \frac{m+n}{n}\,\beta(\pi^{>m}).
\]

For $\beta^+$, choose $\pi$ maximizing $\beta$ on $\Pi_{m+n}$. Then:
\[
    \beta_{m+n}^+
    \le \frac{m+n}{m}\,\beta(\pi^{\le m})
    +
    \frac{m+n}{n}\,\beta(\pi^{>m}) \le 
    \frac{m+n}{m}\,\beta_m^+
    +
    \frac{m+n}{n}\,\beta_n^+.
\]

For $\beta^\typ$, let $\boldsymbol\pi$ be uniform on $\Pi_{m+n}$ and take
expectations in the preceding pointwise inequality. The standardized patterns
$\boldsymbol\pi^{\le m}$ and $\boldsymbol\pi^{>m}$ are uniform on $\Pi_m$ and
$\Pi_n$, respectively. Hence
\[
    \beta_{m+n}^\typ
    \le
    \frac{m+n}{m}\,\beta_m^\typ
    +
    \frac{m+n}{n}\,\beta_n^\typ.
\]

For $\beta^-$, choose $\sigma\in\Pi_m$ and $\rho\in\Pi_n$ attaining
$\beta_m^-$ and $\beta_n^-$. Let $\pi\in\Pi_{m+n}$ be any permutation such that
$\pi^{\le m}=\sigma$ and $\pi^{>m}=\rho$. Then
\[
    \beta_{m+n}^-
    \le
    \beta(\pi)
    \le
    \frac{m+n}{m}\,\beta_m^-
    +
    \frac{m+n}{n}\,\beta_n^-.
\]

Thus, for each of the three choices of sequence $\xi_k\in\{\beta_k^\typ,\beta_k^+,\beta_k^-\}$, we have that $(\xi_k/k)_k$ is subadditive:
\[
    \frac{\xi_{m+n}}{m+n}
    \le
    \frac{\xi_m}{m}+\frac{\xi_n}{n}.
\]
Fekete's lemma gives
\beq{eq:subcons}
    \lim_{k\to\infty}\frac{\xi_k}{k^2}
    =
    \inf_{k\ge1}\frac{\xi_k}{k^2}.
\enq
In particular, the three limits exist, and we have
\[
c_\typ\le \frac{\beta_k^\typ}{k^2},\qquad
c_+\le \frac{\beta_k^+}{k^2},\qquad
c_-\le \frac{\beta_k^-}{k^2}
\]
for every $k\ge1$. 
\end{proof}

We now turn to the remaining inequalities in \cref{lem:subsupadd}.

\begin{proof}[Proof of \eqref{eq:betagamma}] The lower bound was established in \eqref{eq:ctyp-LB}. For the upper bound, it remains to show that $\beta_k^{\typ}\le \gamma_k$ for all $k\ge0$. We proceed by induction. The cases $k=0,1$ hold by definition. Consider $k\ge2$. Using \eqref{eq:rec fixed} and the general inequality
\(\mathbb E[\inf_\theta Z_\theta]\le \inf_\theta \mathbb E[Z_\theta]\), it holds for fixed $i$ that conditional on $\pi(1)=i$,
\[
\mathbb E[\beta(\pi)\mid \pi(1)=i]
\le \inf_{0<a<b<1}
\cb{ \frac{1}{b-a}+\frac{1}{b-a}\int_a^b \frac{\mathbb E[\beta(L(\pi))\mid\pi(1)=i]}{x}+\frac{\mathbb E[\beta(R(\pi))\mid\pi(1)=i]}{1-x}\,dx }.
\]
But, conditionally on $i$, $L(\pi)$ and $R(\pi)$ are independent and uniform random permutations on $[i-1]$ and $[k-i]$, hence
\begin{align*}
\beta_k^{\mathrm{typ}}
\le
\frac{1}{k}\sum_{i=1}^k
G\!\left(
    \mathbb{E}_{\pi\in\Pi_k:\,\pi(1)=i}\bigl[\beta(L(\pi))\bigr],
    \mathbb{E}_{\pi\in\Pi_k:\,\pi(1)=i}\bigl[\beta(R(\pi))\bigr]
\right) 
=
\frac{1}{k}\sum_{i=1}^k
G\!\left(
    \beta_{i-1}^{\mathrm{typ}},
    \beta_{k-i}^{\mathrm{typ}}
\right).
\end{align*}

By the induction hypothesis and the coordinate-wise monotonicity of $G$,
\[
    \beta_k^{\mathrm{typ}}
    \le
    \frac{1}{k}\sum_{i=1}^k
    G(\gamma_{i-1},\gamma_{k-i})
    =
    \gamma_k.
\]
Combining with the subadditivity formula above yields the upper bound in \eqref{eq:betagamma}.
\end{proof}

\begin{proof}[Proof of \eqref{eq:cplus-ineqs}]
    We prove a ``shifted superadditivity'' principle to establish the remaining lower bound. Recall $\beta^+_{k} = \max_{j \in [k]}G(\beta^+_{j-1}, \beta^+_{k-j})$. Then for any $n,m \ge 0$, we obtain a lower bound on $\beta^+_{n+m+1}$ by selecting $j = n+1$:
    \[
        \beta^+_{n+m+1} \ge G(\beta^+_{n},\beta^+_m)\,.
    \]
    It then follows from \eqref{eq:G1} that
    \begin{equation}
        \sqrt{\beta^+_{n+m+1}} \ge \sqrt{\beta^+_n} + \sqrt{\beta^+_{m}}\,.
    \end{equation}
    In particular, the shifted sequence $(\beta_{k-1}^+)^{1/2}$ is \textit{super}additive. (More transparently, the sequence $a_r:=(\beta^+_{r-1})^{1/2}$, defined for $r\ge1$, is superadditive, since the preceding display gives $a_{r+s}\ge a_r+a_s$ with $r=n+1$ and $s=m+1$). Hence Fekete's lemma implies 
    \[
        \lim_{k\to \infty}\frac{\beta^+_{k-1}}{k^2} = \sup_{k \in \NN} \frac{\beta^+_{k-1}}{k^2}\,.
    \]    
    Thus
    \[
        \sup_{k \in \NN} \frac{\beta^+_{k-1}}{k^2} = \lim_{k \to \infty} \frac{\beta^+_{k-1}}{k^2}  = \lim_{k \to \infty}  \frac{\beta^+_{k-1}}{(k-1)^2} =: c_+,
    \]
    establishing the claimed lower bound in \eqref{eq:cplus-ineqs} after reindexing.
    \end{proof}

\begin{proof}[Proof of \eqref{eq:cminus-ineqs}]
It remains to prove
$c_-\ge1/4$. We prove by induction on $k\ge1$ that every $\pi\in\Pi_k$
satisfies
\[
    \beta(\pi)\ge \frac{(k+1)^2}{4}.
\]
The case $k=1$ is immediate. Let $k\ge2$, and let $L$ and $R$ be the left and
right subpatterns of $\pi$. If both are non-empty, then by the induction
hypothesis and \eqref{eq:G1},
\[
    \beta(\pi)
    \ge
    \left(\frac{|L|+1}{2}+\frac{|R|+1}{2}\right)^2
    =
    \frac{(k+1)^2}{4}.
\]
If exactly one subtree is empty, say $L=\emptyset$, then by the induction
hypothesis and \eqref{eq:G2},
\[
    \beta(\pi)
    \ge
    \left(\frac{|R|+1}{2}+\frac12\right)^2
    =
    \frac{(k+1)^2}{4},
\]
and the case $R=\emptyset$ is symmetric. This completes the induction. Taking $k\to\infty$ gives $c_-\ge1/4$.

\end{proof}

\section{Scaling limits: interval certificates}\label{sec:numer}
The main result of this section is the rigorous numerical evaluation of the bounds in \eqref{eq:betagamma}, \eqref{eq:cplus-ineqs}, and \eqref{eq:cminus-ineqs} for large finite $k$, organized as follows. First, we formally introduce so-called interval certificates as a formal proof system. Then, we give an analytic proof that the proposed elementary certificates are genuine certificates for the scalar kernel \(G\) and for the finite dynamic programs in \eqref{eq:eval}. Finally, we provide explicit candidate intervals for the outputs of these dynamic programs, as well as rigorous numerical verification, using the interval-arithmetic package \texttt{FLINT/arb}, of the corresponding elementary certificates for these interval containments. 

The code used for evaluating interval certificates, as well as the actual proposed intervals and certified slacks, are available at \url{https://github.com/dylanaltschuler/online-permutation-embedding/tree/main} and implemented in \texttt{Python}.

\subsection{Interval certificates}
Recall the following scalar kernel, arising from evaluating the integral in \eqref{eq:rec fixed},
\begin{equation}\label{eq:def-G}
G(p,q)
:=
\inf_{0<a<b<1}
\frac{1+p\log(b/a)+q\log((1-a)/(1-b))}{b-a},
\qquad p,q\ge0\,.
\end{equation}
The key observation enabling our numerical evaluations is \eqref{eq:eval}, which implies that $\beta_k^{\pm}$ and $\gamma_k$ for $k \in [k_0]$ are expressible by finitely many compositions of \(G\) and basic operations (arithmetic, averages, maxima, and minima). Hence, our main task is to understand how to rigorously compute $G$ to high accuracy. 

\begin{remark}[Elementary operations and scalar certificates]
    In what follows, \textit{elementary functions} are functions which are computable with rigorous error guarantees using standard computing packages. Concretely, we use rigorous interval arithmetic library \texttt{FLINT}/\texttt{Arb}\footnote{Documentation: \url{https://flintlib.org/doc/arb.html}} \cite{arb}, wrapped in Python by \texttt{python-flint}. This library provides computation with rigorous error guarantees for arithmetic operations, the logarithm, and the exponential. For our purposes, it suffices to define elementary functions as finite compositions of these three primitives, such as square-roots and hyperbolic trigonometric functions. 
    
    A scalar \textit{elementary inequality} is the task of determining whether an elementary function evaluated on a specific input is non-negative. (``Scalar'' is in reference to evaluation of the function at a specific input, as opposed to the non-elementary task of proving function-wise inequalities). Finally, an \textit{elementary certificate} is a claim together with a finite, explicitly listed collection of scalar elementary inequalities whose validity implies the claim. 
\end{remark}
We now introduce a class of ``checkable proofs'' which we call \textit{interval certificates}: they assert ``$G(p,q)$ lies in an interval'' and consist only of checking a bounded number of scalar elementary inequalities. Specifically, we will only need to check the signs of the following residual functions: 

\begin{definition}[Residuals]
     For \(q>0\) set
    \begin{equation}\label{eq:boundary-residual}
    \mathsf B_q(u)
    :=
    \sqrt q\,u-q\log\pa{1+\frac{u}{\sqrt q}}-1,
    \qquad u\ge0,
    \end{equation}
    and for \(p,q>0\) set
    \begin{align}
    \mathsf P_{p,q}(u)
    &:=
    2\sqrt{pq}\sinh u-(p+q)u
     +(p-q)
     \log\frac{\sqrt p\,e^u+\sqrt q}{\sqrt p+\sqrt q\,e^u},
    \label{eq:interior-root-functional}\\
    \mathsf I_{p,q}(u)
    &:=
    \frac{p+q+2\sqrt{pq}\cosh u}{2\sqrt{pq}\sinh u}
    \pa{\mathsf P_{p,q}(u)-1},
    \qquad u>0 .
    \label{eq:interior-residual}
    \end{align}
\end{definition}

\begin{lemma}[Interval certificates for \(G\)]\label{lem:G-cert} The functions \(\mathsf B_q\), \(\mathsf P_{p,q}\), and \(\mathsf I_{p,q}\)
are elementary on their stated domains. Moreover, for \(p,q\ge0\), we have the following (elementary) interval certificates on $G$:
\begin{enumerate}
    \item If \(p=q=0\), then \(G(p,q)\in[1,1]\).
    \item If \(p=0<q\), and if \(0\le \underline{u}\le \overline{u}\) satisfy
    \[
        \mathsf B_q(\underline{u})\le0\le\mathsf B_q(\overline{u}),
    \]
    then
    \[
        G(0,q)\in\br{q+\sqrt q\,\underline{u},\ q+\sqrt q\,\overline{u}}.
    \]
    The case \(q=0<p\) is symmetric.
    \item If \(p,q>0\), and if \(0<\underline{u}\le \overline{u}\) satisfy
    \[
        \mathsf I_{p,q}(\underline{u})\le0\le\mathsf I_{p,q}(\overline{u}),
    \]
    then
    \[
        G(p,q)
        \in
        \br{
        p+q+2\sqrt{pq}\cosh \underline{u},
        p+q+2\sqrt{pq}\cosh \overline{u}}.
    \]
\end{enumerate}
\end{lemma}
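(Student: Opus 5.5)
The plan is to reduce everything to Proposition~\ref{prop:F-root}. That result shows $G(p,q)=v_*$, where $v_*$ is the unique solution of $F(v)=1$ with $F(v)=\int_0^1(v-\phi_{p,q}(x))_+\,dx$. It also shows that $F$ is continuous, non-decreasing, and strictly increasing wherever it is positive. Consequently, whenever $v_1\le v_2$ satisfy $F(v_1)\le 1\le F(v_2)$, we get $v_*\in[v_1,v_2]$. In each case I will reparametrize $v$ by a scalar $u$ through an increasing map $v=v(u)$, compute $F(v(u))$ in closed form, and identify the stated residual as $F(v(u))-1$ times a positive factor.

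Elementarity is immediate. The functions $\mathsf B_q$, $\mathsf P_{p,q}$ and $\mathsf I_{p,q}$ are built from arithmetic operations, $\exp$ and $\log$; square roots arise as $\exp(\tfrac12\log\cdot)$, and $\sinh,\cosh$ are combinations of exponentials. The arguments of the logarithms are positive on the stated domains, and $\sinh u>0$ for $u>0$, so $\mathsf I_{p,q}$ is well defined there.

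\textbf{Case $p=q=0$.} Here $\phi\equiv0$, so $F(v)=v_+$ and hence $v_*=1$.

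\textbf{Case $p=0<q$.} Here $\phi(x)=q/(1-x)$ and $\min\phi=q$. For $v\ge q$ the sublevel set is $[0,b)$ with $1-b=q/v$. Then
\[
F(v)=vb-q\log\frac{1}{1-b}=v-q-q\log\frac vq.
\]
Setting $v=q+\sqrt q\,u$ with $u\ge0$ gives $F(v)-1=\sqrt q\,u-q\log(1+u/\sqrt q)-1=\mathsf B_q(u)$. The map $u\mapsto v$ is increasing, so the sign conditions give $v_*\in[q+\sqrt q\,\underline u,\,q+\sqrt q\,\overline u]$.

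\textbf{Case $p,q>0$.} Here $\min_x\phi=(\sqrt p+\sqrt q)^2$, so I parametrize $v=p+q+2\sqrt{pq}\cosh u$ with $u>0$; this is increasing in $u$ and covers all $v>\min\phi$. The endpoints $a<b$ of $I_v$ are the roots of
\[
vx^2-(v+p-q)x+p=0 .
\]
The discriminant factors as $(v-(\sqrt p+\sqrt q)^2)(v-(\sqrt p-\sqrt q)^2)=4pq\sinh^2u$. Hence $v(b-a)=2\sqrt{pq}\sinh u$, and
\[
F(v)=2\sqrt{pq}\sinh u-p\log\frac ba-q\log\frac{1-a}{1-b}.
\]
The remaining step, which I expect to be the only real computation, is to show
\[
p\log\frac ba+q\log\frac{1-a}{1-b}=(p+q)u-(p-q)\log\frac{\sqrt p\,e^u+\sqrt q}{\sqrt p+\sqrt q\,e^u}.
\]
This yields $F(v)=\mathsf P_{p,q}(u)$. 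To prove it, I would write $a,b=(v+p-q\mp2\sqrt{pq}\sinh u)/(2v)$. Substituting $v$ and using $\cosh u\pm\sinh u=e^{\pm u}$, I expect the numerators to factor as products such as $(\sqrt p+\sqrt q e^{\pm u})(\ldots)$. This should give
\[
\frac ba=e^u\,\frac{\sqrt p\,e^u+\sqrt q}{\sqrt p+\sqrt q\,e^u}\cdot(\text{correction}),
\]
and a symmetric expression for $(1-a)/(1-b)$; the two logarithms then combine into the form above. I would first check the factorization in the symmetric case $p=q$, where the last term vanishes and $b/a=(1-a)/(1-b)=e^u$, and then do the general case.

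Once $F(v)=\mathsf P_{p,q}(u)$ is established, note that
\[
\mathsf I_{p,q}(u)=\frac{v}{v(b-a)}\,(F(v)-1),
\]
and the prefactor $v/(2\sqrt{pq}\sinh u)$ is positive for $u>0$. So $\mathsf I_{p,q}$ has the sign of $F(v(u))-1$. Monotonicity of $u\mapsto v(u)$ and the bracketing principle then give $G(p,q)\in[v(\underline u),v(\overline u)]$, as claimed.
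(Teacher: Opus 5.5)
Your proposal is correct, and it takes a somewhat different route from the paper. The core algebra is shared, but the justification differs.

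\textbf{The paper's route.} It proves two auxiliary lemmas by direct calculus.
\begin{enumerate}
\item In the boundary case it substitutes $s=(1-a)/(1-b)$, lets $a\downarrow 0$, and minimizes $\Psi_q(s)=\frac{s}{s-1}(1+q\log s)$ by differentiating.
\item In the interior case it takes the endpoints $a(u),b(u)$ to be the two roots of $p/x+q/(1-x)=\lambda$. It argues that the minimizer is interior and that the first-order conditions reduce to $\mathsf P_{p,q}(u)=1$. It then computes $\mathsf P_{p,q}'>0$ and the limits at $0^+$ and $\infty$ to get a unique root.
\end{enumerate}

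\textbf{Your route.} You go through Proposition~\ref{prop:F-root} instead. You observe that $\mathsf B_q(u)+1$ and $\mathsf P_{p,q}(u)$ are exactly $F(v(u))$ under the increasing reparametrizations $v=q+\sqrt q\,u$ and $v=p+q+2\sqrt{pq}\cosh u$. Your bracketing step is sound: monotonicity of $F$ plus uniqueness of $v_*$ suffices.

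\textbf{What each buys.} Your route gets the identification $G=v_*$ and the monotonicity of the residual from a global argument. You do not need interiority of the minimizer, sufficiency of stationarity, or the explicit derivative and limit computations. Indeed, the paper's formula $\mathsf P_{p,q}'=(b-a)\frac{d}{du}\lambda$ is just the chain rule applied to $F'(v)=|I_v|$. The boundary case also reduces to a one-line integral. The paper's route is self-contained calculus and names the exact root $u_{p,q}$.

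\textbf{The step you left open.} Your deferred computation closes cleanly, but your guessed shape for $b/a$ has the fraction inverted, and no ``correction'' factor is needed. Since
\[
v=(\sqrt p+\sqrt q\,e^{u})(\sqrt p+\sqrt q\,e^{-u}),
\]
you get $a=\sqrt p/(\sqrt p+\sqrt q\,e^u)$ and $b=\sqrt p\,e^u/(\sqrt p\,e^u+\sqrt q)$. Hence
\[
\frac ba=e^u\,\frac{\sqrt p+\sqrt q\,e^u}{\sqrt p\,e^u+\sqrt q},
\qquad
\frac{1-a}{1-b}=e^u\,\frac{\sqrt p\,e^u+\sqrt q}{\sqrt p+\sqrt q\,e^u},
\]
and the two logarithms combine exactly into the form you stated.
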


The proof is briefly deferred. Next, we develop notation for propagation of interval estimates.

\begin{definition}[Interval extension]
    For the remainder of this section, an \textit{interval} is a closed interval with rational endpoints. An \textit{interval enclosure} of a variable $x$ is an interval $[x] := [\underline{x},\overline{x}] \ni x$. Its width is \(\operatorname{wd}([x])=\overline{x}-\underline{x}\). 
    For a real function $F(x_1,\dots,x_m)$, a function $[F]([x_1],\dots,[x_m])$ is said to be an \textit{interval extension} of $F$ if
    \[
        x_j\in[x_j]\quad (1\le j\le m)
        \qquad\Longrightarrow\qquad
        F(x_1,\dots,x_m)\in [F]([x_1],\dots,[x_m]).
    \]
    Interval extensions also naturally define operations on intervals. For example,
    \[
    I+J=[\underline I+\underline J,\overline I+\overline J],
    \qquad
    I/c=[\underline I/c,\overline I/c]\quad(c>0).
    \]
    We henceforth implicitly assume all interval operations are performed with ``outward'' rounding (lower endpoints rounded downward and upper endpoints upward) to maintain rationality of the endpoints.     
\end{definition}

Crucially, our kernel $G$ is monotone in its inputs, allowing for a natural interval extension.

\begin{proposition}[Monotonicity of $G$]
\label{prop:G-monotone} The map
$(p,q) \mapsto G(p,q)$ is non-decreasing in each coordinate on $[0,\infty)^2$. In particular, the following is a valid interval extension of $G$. 
\begin{equation}\label{eq:G-interval-extension}
[G]([p],[q])
:=
\br{G(\underline p,\underline q),\ G(\overline p,\overline q)},
\end{equation}
\end{proposition}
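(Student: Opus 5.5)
The plan is to argue directly from the definition \eqref{eq:def-G1} as a pointwise minimum, rather than through the root characterization of Proposition~\ref{prop:F-root}. Fix $0\le a<b\le 1$. For $p\le p'$ and $q\le q'$, compare the continuation costs pointwise:
\[
    \phi_{p,q}(x)\le \phi_{p',q'}(x)\qquad\text{for every } x\in[0,1],
\]
with the extended-value conventions of \eqref{eq: phipq definition}. On $(0,1)$ this is immediate, since each term $\frac{p}{x}\ind_{\{p>0\}}$ is non-decreasing in $p\ge0$. At the endpoints we only need to check the conventions. At $x=0$ the term is $0$ when $p=0$ and $+\infty$ when $p>0$, so it is still non-decreasing in $p$. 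The endpoint $x=1$ is handled the same way for the $q$-term.

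Integrating this pointwise inequality over $[a,b]$ gives
\[
    \int_a^b\phi_{p,q}\le\int_a^b\phi_{p',q'},
\]
as improper integrals in $[0,\infty]$; this is valid because the integrands are non-negative. Hence $G_{p,q}(a,b)\le G_{p',q'}(a,b)$ for every admissible pair $(a,b)$. Taking the minimum over $(a,b)$ on both sides yields $G(p,q)\le G(p',q')$. The minimum is attained by Proposition~\ref{prop:F-root}, although the argument only needs infima. Proving each coordinate separately is the special case where one coordinate is held fixed.

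For the interval extension \eqref{eq:G-interval-extension}, suppose $p\in[\underline p,\overline p]$ and $q\in[\underline q,\overline q]$ with $\underline p,\underline q\ge0$. Applying monotonicity twice, once in each coordinate (or using the joint statement above), gives
\[
    G(\underline p,\underline q)\le G(p,q)\le G(\overline p,\overline q).
\]
This is exactly the defining property of an interval extension.

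The only delicate point is the degenerate boundary behaviour. A term switches from being omitted (when $p=0$) to contributing $p/x$, which may blow up at $x=0$. The pointwise comparison has to be done in the extended reals, and the convention $0/0:=0$ versus $p/0=+\infty$ has to respect the order. I expect this to be a short check rather than a real obstacle. One could also note that \eqref{eq:def-G} restricts to $0<a<b<1$, which avoids the endpoint issue entirely: the two formulations agree, because endpoint choices either give $+\infty$ or are limits of interior ones.
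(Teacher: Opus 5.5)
Your proposal is correct and follows the same argument as the paper: for each fixed $(a,b)$ the objective is non-decreasing in $(p,q)$, and taking the infimum over $(a,b)$ preserves this. The paper works with the logarithmic form on $0<a<b<1$, where the objective is affine with positive coefficients in $(p,q)$, so it never has to check the endpoint conventions that you verify by hand.
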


\begin{proof}
By definition of $[p]$ and $[q]$ as interval enclosures, we may assume they have rational endpoints. For fixed $0<a<b<1$, the objective in \eqref{eq:def-G} is affine and increasing in $p$ and $q$.  Taking the infimum over $(a,b)$ preserves coordinate-wise monotonicity.
\end{proof}

With our interval extensions and certificates in hand for $G$, we turn to
constructing interval extensions and certificates for the full dynamic program.

\begin{corollary}[Interval certificates for the DP]\label{cor:DP-certificate}
Fix \(k_0\).  Suppose we are given rational intervals
\[
[\gamma_k],\qquad [\beta_k^+],\qquad [\beta_k^-]\qquad (0\le k\le k_0).
\]
Suppose the boundary conditions
$
[\gamma_0]=[\beta_0^+]=[\beta_0^-]=[0,0]$ and $
[\gamma_1]=[\beta_1^+]=[\beta_1^-]=[1,1]$. Define $
[G]([\beta_{i-1}^{\pm}],[\beta_{k-i}^{\pm}])
=:[\underline G^{\pm}_{i,k},\overline G^{\pm}_{i,k}]$ and further suppose that for each \(2\le k\le k_0\), we have:
\begin{align}
\frac1k\sum_{i=1}^k [G]([\gamma_{i-1}],[\gamma_{k-i}])
&\subseteq [\gamma_k],
\label{eq:gamma-cert-inclusion}\\
\br{\max_{1\le i\le k}\underline G^+_{i,k},
      \max_{1\le i\le k}\overline G^+_{i,k}}
&\subseteq [\beta_k^+],
\label{eq:beta-plus-cert-inclusion}\\
\br{\min_{1\le i\le k}\underline G^-_{i,k},
      \min_{1\le i\le k}\overline G^-_{i,k}}
&\subseteq [\beta_k^-],
\label{eq:beta-minus-cert-inclusion}
\end{align}
Then \([\gamma_k]\), \([\beta_k^+]\), and \([\beta_k^-]\) contain the exact values
\(\gamma_k\), \(\beta_k^+\), and \(\beta_k^-\), respectively, for every
\(0\le k\le k_0\). 
\end{corollary}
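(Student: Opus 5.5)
The plan is a strong induction on $k$: for each $0\le k\le k_0$ we show simultaneously that $\gamma_k\in[\gamma_k]$, $\beta_k^+\in[\beta_k^+]$, and $\beta_k^-\in[\beta_k^-]$. The base cases $k=0,1$ are the stated boundary conditions, because $\gamma_0=\beta_0^\pm=0$ and $\gamma_1=\beta_1^\pm=1$ exactly; the latter follows from \eqref{eq:eval}, or directly from $G(0,0)=1$. Now fix $2\le k\le k_0$ and assume the three enclosures hold for every index below $k$. Each value at level $k$ is determined by \eqref{eq:eval} from values at levels $i-1$ and $k-i$ with $1\le i\le k$, so the hypothesis covers every input that appears.

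For $\gamma_k$, we apply Proposition~\ref{prop:G-monotone}. Since $\gamma_{i-1}\in[\gamma_{i-1}]$ and $\gamma_{k-i}\in[\gamma_{k-i}]$, monotonicity gives
\[
G(\gamma_{i-1},\gamma_{k-i})\in[G]([\gamma_{i-1}],[\gamma_{k-i}]).
\]
Interval addition and division by the positive constant $k$ are valid interval extensions, and outward rounding only enlarges the result. Hence $\gamma_k=\frac1k\sum_i G(\gamma_{i-1},\gamma_{k-i})$ lies in $\frac1k\sum_i[G]([\gamma_{i-1}],[\gamma_{k-i}])$, which is contained in $[\gamma_k]$ by \eqref{eq:gamma-cert-inclusion}.

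For $\beta_k^+$, monotonicity gives $\underline G^+_{i,k}\le G(\beta^+_{i-1},\beta^+_{k-i})\le\overline G^+_{i,k}$ for every $i$. Taking the maximum over $i$ preserves both inequalities, because $\max$ is coordinatewise monotone. Therefore $\beta_k^+=\max_i G(\beta^+_{i-1},\beta^+_{k-i})$ lies in $[\max_i\underline G^+_{i,k},\max_i\overline G^+_{i,k}]$, and this interval is contained in $[\beta_k^+]$ by \eqref{eq:beta-plus-cert-inclusion}. The case of $\beta_k^-$ is identical with $\min$ in place of $\max$, using \eqref{eq:beta-minus-cert-inclusion} and the recursion \eqref{eq:beta-}. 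This completes the induction.

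The only delicate point is to state precisely what the interval symbols $[G](\cdot,\cdot)$ and $\underline G^\pm_{i,k},\overline G^\pm_{i,k}$ mean. In practice $G(\underline p,\underline q)$ and $G(\overline p,\overline q)$ are not computed exactly; they are enclosed through Lemma~\ref{lem:G-cert}. The argument therefore has to accept any lower bound on $G(\underline p,\underline q)$ and any upper bound on $G(\overline p,\overline q)$ as a valid replacement. The inclusions \eqref{eq:gamma-cert-inclusion}--\eqref{eq:beta-minus-cert-inclusion} should then be read with these certified outer endpoints, namely the lower endpoint of the certified enclosure of $G(\underline p,\underline q)$ from Lemma~\ref{lem:G-cert} and the upper endpoint of the certified enclosure of $G(\overline p,\overline q)$. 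Replacing exact endpoints by outer ones only widens the intervals, so every containment used above still holds. No further obstacle is expected.
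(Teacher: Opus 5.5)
Your proof is correct and follows the paper's own argument: strong induction on $k$, using coordinatewise monotonicity of $G$ to get $G(\xi_{i-1},\xi_{k-i})\in[G]([\xi_{i-1}],[\xi_{k-i}])$, then noting that interval sums, maxima, and minima preserve containment. Your closing remark is sound and makes explicit a point the paper leaves implicit: the exact endpoint values $G(\underline p,\underline q)$ and $G(\overline p,\overline q)$ may be replaced by the certified outer bounds from Lemma~\ref{lem:G-cert}, since this only widens the intervals.
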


\begin{proof}
We argue by induction on \(k\). The cases \(k=0,1\) hold by the stated
boundary conditions. Assume the claim holds for all smaller indices. For each
\(1\le i\le k\), the induction hypothesis and the coordinate-wise monotonicity
of \(G\) imply that
\[
G(\gamma_{i-1},\gamma_{k-i})
\in [G]([\gamma_{i-1}],[\gamma_{k-i}]),
\]
and similarly for the \(G^+\) and \(G^-\) sequences. Taking sums, maxima, and
minima of the corresponding interval enclosures preserves containment, and hence the displayed inclusions imply the desired containment for
\(\gamma_k\), \(\beta_k^+\), and \(\beta_k^-\).
\end{proof}

Note in particular that, in view of the elementary certificates for \(G\)
constructed in \cref{lem:G-cert}, each inclusion in \eqref{eq:gamma-cert-inclusion}--\eqref{eq:beta-minus-cert-inclusion} reduces to finitely many scalar endpoint inequalities. Thus \cref{cor:DP-certificate}, together with the certified calls to \(G\), gives an elementary certificate for the finite dynamic programs.

\subsection{Numerical evaluation of the certificates}

We now isolate all of the actual numerical computations. 

\begin{proposition}[Numerical validation of certificates]
    Let $k_0 = 5000$. The interval enclosures \( [\gamma_k]\), \([\beta_k^+]\), and \([\beta_k^-]\), for \(0\le k\le k_0\), supplied in \texttt{intervals.json} are valid interval enclosures. In particular, \cref{thm:numerical-scaling} holds.
\end{proposition}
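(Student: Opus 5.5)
The plan is to treat this as a computer-assisted verification built on \cref{cor:DP-certificate} and \cref{lem:G-cert}, and then to feed the certified finite values into \cref{lem:subsupadd}. First, I check the hypotheses of \cref{cor:DP-certificate} for the supplied enclosures, by induction on $2\le k\le k_0$; the boundary conditions at $k=0,1$ are checked directly. For each $k$ and $i\in[k]$, the interval extension \eqref{eq:G-interval-extension} needs only the two scalar values $G(\underline p,\underline q)$ and $G(\overline p,\overline q)$, where the arguments are endpoints of enclosures already certified at smaller indices. I do not evaluate these exactly. Instead, for each I supply a bracketing pair $(\underline u,\overline u)$ and verify in \texttt{arb} the sign conditions of \cref{lem:G-cert}: the $\mathsf B_q$ residual in the boundary case, or the $\mathsf I_{p,q}$ residual in the interior case. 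This yields a certified rational enclosure of that $G$-value.

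Since the lower endpoint of $[G]$ only needs a certified lower bound, and the upper endpoint a certified upper bound, I use the lower endpoint of the certificate for $G(\underline p,\underline q)$ and the upper endpoint for $G(\overline p,\overline q)$. Averages, maxima and minima are then propagated with outward rounding. Finally, I check the three inclusions \eqref{eq:gamma-cert-inclusion}--\eqref{eq:beta-minus-cert-inclusion} as rational comparisons. Every step is a finite list of scalar elementary inequalities evaluated by rigorous ball arithmetic, so a successful run is a proof.

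Second, I deduce \cref{thm:numerical-scaling}. Existence of the limits is already in \cref{lem:subsupadd}. For $c_+$, I apply \eqref{eq:cplus-ineqs} at $k=k_0$ using the certified $[\beta_{k_0}^+]$: the lower bound comes from $\underline{\beta}^+_{k_0}/(k_0+1)^2$ and the upper bound from $\overline\beta^+_{k_0}/k_0^2$, which should give $0.50547\le c_+\le 0.50568$. For $c_-$, I take $1/4$ from \eqref{eq:cminus-ineqs} and the upper bound $\overline\beta^-_{k_0}/k_0^2\le 0.48867$. For $c_\typ$, I use the upper bound $\overline\gamma_{k_0}/k_0^2\le0.49967$, and the lower bound from the left end of \eqref{eq:betagamma}, i.e. $\bigl(\tfrac{2}{(k_0+1)(k_0+2)}\sum_{j\le k_0}\sqrt{\underline\beta^-_j}\bigr)^2\ge 0.48934$, computed with rigorous rounding. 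The strict chain $c_-<c_\typ<\tfrac12<c_+$ then follows from $0.48867<0.48934$, $0.49967<\tfrac12$ and $\tfrac12<0.50547$.

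The main obstacle is the proof of \cref{lem:G-cert}, which is deferred. It requires deriving from the first-order conditions of \cref{prop:F-root} that at the interior optimum $B=G(p,q)$ one has $f(a)=f(b)=B$ and $\int_a^b r=1$. I would then parametrize the root pair by $u$, so that $B=p+q+2\sqrt{pq}\cosh u$, and show that the area condition becomes $\mathsf P_{p,q}(u)=1$ with $\mathsf P_{p,q}$ increasing. Monotonicity is needed so that a sign change brackets the unique root and $B$ is monotone in $u$. Practically, the other difficulty is ensuring that the certificate widths are small enough that the $5000$-step propagation does not inflate the intervals past the stated decimal bounds. I would handle this by choosing $\underline u,\overline u$ from a high-precision Newton solve, at a working precision large enough that the accumulated widths stay far below $10^{-5}$.
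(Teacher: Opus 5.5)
Your framework is the paper's: certify the hypotheses of Corollary~\ref{cor:DP-certificate} by \texttt{arb} sign checks of the residuals in Lemma~\ref{lem:G-cert}, then read the constants off Lemma~\ref{lem:subsupadd}. Your treatment of $c_+$, of $c_-$, and of the upper bound on $c_\typ$ is fine. The step that fails is the lower bound on $c_\typ$, because you evaluate it at $k=k_0$.

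Put $\ell_k:=\bigl(\tfrac{2}{(k+1)(k+2)}\sum_{j\le k}\sqrt{\beta_j^-}\bigr)^2$. By Stolz--Ces\`aro, $\ell_k\to c_-$, not $c_\typ$. The paper proves monotonicity only for the $\beta^\typ$ version, via the averaged inequality $m_k\ge\frac2k\sum_{j<k}m_j$. That argument has no analogue here, because $\beta^-$ obeys a min-recursion. So ``take the largest $k$'', which is the right instinct for $c_+$ and for the upper bounds, is exactly wrong here: as $k_0$ grows your bound tends to $c_-\le 0.48867<0.48934$. A certified value of $\ell_k$ above $0.48867$ is needed both for the constant $0.48934$ and for the strict inequality $c_-<c_\typ$. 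Such a value can only come from $\ell_k$ overshooting its limit at some finite $k$.

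To see where that happens, write $\sqrt{\beta_j^-}=\sqrt{c_-}\,j+d_j$ and let $\bar d_k$ be the mean of $d_1,\dots,d_k$. Then $\ell_k-c_-\approx 4\sqrt{c_-}(\bar d_k-\sqrt{c_-})/k$, compared with $\beta_k^-/k^2-c_-\approx 2\sqrt{c_-}\,d_k/k$. Suppose $d_j$ is slowly increasing, as the small-$k$ values suggest, and $c_-$ is near the conjectured $0.4883$. Then $d_{5000}\lesssim 1.25<2\sqrt{c_-}$. So the overshoot peaks at moderate $k$ and has largely decayed by $k=5000$, where one expects $\ell_{k_0}<\beta^-_{k_0}/k_0^2$. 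In that case your chain would not separate $c_-$ from $c_\typ$ at all.

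The fix costs nothing. \eqref{eq:betagamma} holds for every $k$, and all partial sums are available from the certified $[\beta_j^-]$. So take the maximum over $1\le k\le k_0$ of the outward-rounded lower bounds.
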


\begin{proof}

    The verification script \texttt{certify.py} reads the candidate intervals supplied in \texttt{intervals.json}, evaluates the residual inequalities from
    \cref{lem:G-cert} using \texttt{FLINT/arb} with outward rounding, and checks
    the interval inclusions in \cref{cor:DP-certificate}. The computed slacks for the certificate inequalities are recorded in \texttt{slacks.json} and are all positive. Hence the interval enclosures
    contain the exact values of \(\gamma_k\), \(\beta_k^+\), and \(\beta_k^-\) for
    all \(0\le k\le k_0\). This proves that our candidate interval enclosures are valid interval enclosures. 
    
    Examining the certified endpoint values of the (now certified) interval enclosures and applying \cref{lem:subsupadd} yields all numerical estimates in \cref{thm:numerical-scaling}, except for the analytic lower bound \(c_-\ge1/4\) which was already proved in \cref{lem:subsupadd}.
\end{proof}

\begin{remark}[Implementation details]
We provide some additional details for the interested reader. First, recall that we are free to generate the candidate intervals arbitrarily; only the verification of the corresponding interval certificate inequalities needs to be done rigorously. As such, the candidate intervals were generated as follows. For $k = 0,1$, the intervals are exactly given by the boundary conditions. For \(k\ge2\), the respective DPs are solved using high-precision (uncertified) computation and then padded with intervals of width approximately \(10^{-9}\). The case \(k=2\) produces the smallest interval widths and hence the most delicate certificate inequalities.
    
These heuristic interval candidates were then rigorously validated by checking the interval certificates defined in \cref{cor:DP-certificate} using \texttt{FLINT/arb}. The minimum recorded slack over all validated certificate inequalities is positive, of order \(10^{-9}\), and occurs in the checks for \(k=2\). Here, our definition of ``slack'' already takes into account the error estimates guaranteed by \texttt{arb}; for example, if \texttt{FLINT/arb} computes $f(x) \in [a,b]$ for some $a > 0$, then the slack of the inequality $f(x) \ge 0$ is defined as $a$. That is, any positive slack already constitutes rigorous verification of an inequality, and is not subject to further numerical error. Thus, a positive lower endpoint for the \texttt{arb} enclosure of a residual
rigorously certifies the corresponding inequality. 
\end{remark}

\begin{remark}[Redundancy]
    As a check against implementation errors in \texttt{python-flint}/\texttt{arb},
we separately verified the same certificate inequalities using
\texttt{SageMath/mpfi} and directed-rounding \texttt{SageMath/mpfr} \cite{sage}, both implemented in \texttt{C}. We present the \texttt{FLINT} verification because it is the fastest of the three. All three implementations independently verify the candidate intervals with positive slack.
\end{remark}

We now turn to the proof of \cref{lem:G-cert}. The main technical component is isolated in the following two lemmas, which analyze the first-order optimality condition for selecting $(a,b)$ in the variational problem defining $G(p,q)$, treating separately the interior case \(p,q>0\) from the boundary cases
where one of \(p,q\) is zero.

\begin{lemma}[Boundary inputs]\label{lem:G-boundary}
Let \(q>0\).  There is a unique \(u_q>0\) satisfying
\[
    \mathsf B_q(u_q)=0,
\]
and
\[
    G(0,q)=q+\sqrt q\,u_q.
\]
The symmetric statement holds for \(G(p,0)\), and \(G(0,0)=1\).
\end{lemma}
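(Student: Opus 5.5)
We would prove Lemma~\ref{lem:G-boundary} by applying Proposition~\ref{prop:F-root} with $p=0<q$, so that $\phi(x)=\phi_{0,q}(x)=q/(1-x)$. This $\phi$ is increasing on $[0,1)$ with $\phi(0)=q$. Hence for any $v>q$ the sublevel set is $I_v=[0,x_v)$ with $x_v=1-q/v$. By Proposition~\ref{prop:F-root}, $G(0,q)=v_*$, where $v_*$ is the unique solution of $F(v)=1$, and the optimal interval is $(a_*,b_*)=(0,x_{v_*})$. So the whole task reduces to computing $F$ explicitly and changing variables.

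\textbf{Computing $F$.} For $v>q$,
\[
F(v)=\int_0^{1-q/v}\Big(v-\frac{q}{1-x}\Big)dx = v-q-q\log(v/q).
\]
For $v\le q$ we have $F(v)=0$. Next substitute $v=q+\sqrt q\,u$ with $u\ge0$. Then $v/q=1+u/\sqrt q$, and
\[
F(q+\sqrt q\,u)=\sqrt q\,u-q\log\big(1+u/\sqrt q\big)=\mathsf B_q(u)+1 .
\]
Therefore $F(v)=1$ if and only if $\mathsf B_q(u)=0$.

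\textbf{Existence and uniqueness of $u_q$.} The map $u\mapsto q+\sqrt q\,u$ is a strictly increasing bijection from $[0,\infty)$ onto $[q,\infty)$. Every root $v_*$ of $F=1$ lies in $(q,\infty)$, since $F$ vanishes on $(-\infty,q]$. So the uniqueness of $v_*$ from Proposition~\ref{prop:F-root} transfers to a unique root $u_q$ of $\mathsf B_q$ on $[0,\infty)$. This root satisfies $u_q>0$ because $\mathsf B_q(0)=-1$. One can also check this directly: $\mathsf B_q'(u)=\sqrt q\,\frac{u/\sqrt q}{1+u/\sqrt q}>0$ for $u>0$, and $\mathsf B_q(u)\to\infty$. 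It follows that $G(0,q)=v_*=q+\sqrt q\,u_q$.

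\textbf{Symmetric case and $G(0,0)$.} The case $G(p,0)$ follows by the reflection $x\mapsto 1-x$, which swaps the two terms of $\phi_{p,q}$ and leaves $G$ invariant. For $p=q=0$ we have $\phi\equiv0$, so $F(v)=v_+$ and therefore $v_*=1$.

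The main point of care is the bookkeeping for the endpoint $a_*=0$, where $\phi$ is finite under the convention for $\phi$. The improper-integral conventions also need checking: Proposition~\ref{prop:F-root} already allows closed endpoints $0\le a_*$, so no further obstacle is expected. The argument is otherwise an explicit computation.
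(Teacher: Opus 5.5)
Your argument is correct, but it takes a different route from the paper's proof of this lemma.

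\textbf{The paper's route.} The paper uses direct one-variable calculus. It reparametrizes by $s=(1-a)/(1-b)>1$ and observes that for fixed $s$ the objective $\frac{s}{(s-1)(1-a)}(1+q\log s)$ decreases as $a\downarrow0$. This reduces the problem to minimizing $\Psi_q(s)=\frac{s}{s-1}(1+q\log s)$ over $s>1$. The paper then locates the unique critical point, characterized by $s_q-\log s_q=1+1/q$, with value $\Psi_q(s_q)=qs_q$, and substitutes $s_q=1+u_q/\sqrt q$. Existence and uniqueness of the root $u_q$ are checked separately from $\mathsf B_q(0)=-1$, $\mathsf B_q'>0$ and $\mathsf B_q\to\infty$.

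\textbf{Your route.} You reuse the level-set characterization of Proposition~\ref{prop:F-root} and compute $F$ in closed form. This exhibits $\mathsf B_q(u)=F(q+\sqrt q\,u)-1$ and delivers existence, uniqueness and the value $G(0,q)=v_*$ at once.

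\textbf{What each approach buys.} Your version explains where the residual $\mathsf B_q$ comes from. It is exactly parallel to the paper's own interior case in Lemma~\ref{lem:G-interior}, where $\mathsf P_{p,q}(u)$ is precisely $F(\lambda)$ at the level $\lambda=p+q+2\sqrt{pq}\cosh u$. The paper's boundary argument, by contrast, is self-contained and does not lean on Proposition~\ref{prop:F-root}.

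\textbf{One bookkeeping point.} In Section~\ref{sec:numer}, $G$ is written in \eqref{eq:def-G} as an infimum over $0<a<b<1$. Proposition~\ref{prop:F-root} instead gives a minimum over $0\le a<b\le1$, attained here at $a_*=0$. The two agree because $a\mapsto G_{0,q}(a,b_*)$ is continuous at $a=0$ with $b_*=1-q/v_*<1$. The same holds for $G(0,0)$, whose minimizer is $(0,1)$. This deserves a sentence but is not a gap.
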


\begin{proof}
Fix \(q>0\), and let
\[
    s:=\frac{1-a}{1-b}>1.
\]
Then \(b=1-(1-a)/s\) and
\[
    b-a=\frac{(s-1)(1-a)}{s}.
\]
Then for fixed \(s\), \(G(0,q)\) is the infimum over $a$ of
\[
    \frac{s}{(s-1)(1-a)}\pa{1+q\log s},
\]
which decreases as $a$ approaches $0$ from above.  Thus
\[
G(0,q)=\inf_{s>1}\Psi_q(s),
\qquad
\Psi_q(s):=\frac{s}{s-1}\pa{1+q\log s}.
\]
A direct derivative computation gives
\[
\Psi_q'(s)=\frac{q(s-1)-q\log s-1}{(s-1)^2}.
\]
The numerator is strictly increasing on \((1,\infty)\), starts at \(-1\), and tends
to infinity.  Hence \(\Psi_q\) has a unique minimizer \(s_q>1\), characterized by
\[
    s_q-\log s_q=1+\frac1q.
\]
Then
\(\Psi_q(s_q)=q s_q\).  Writing \(s_q=1+u_q/\sqrt q\) gives exactly
\(\mathsf B_q(u_q)=0\) and \(G(0,q)=q+\sqrt q\,u_q\). Moreover, \(\mathsf B_q(0)=-1\), \(\mathsf B_q(u)\to\infty\) as
\(u\to\infty\), and
\(\mathsf B_q'(u)=u/(1+u/\sqrt q)>0\) for \(u>0\). Hence the root exists and is
unique.
\end{proof}

\begin{lemma}[Interior inputs]\label{lem:G-interior}
Let \(p,q>0\).  There is a unique \(u_{p,q}>0\) satisfying
\[
    \mathsf P_{p,q}(u_{p,q})=1,
\]
and
\[
    G(p,q)=p+q+2\sqrt{pq}\cosh u_{p,q}.
\]
Moreover,
\[
    \mathsf P_{p,q}'(u)
    =
    \frac{4pq\sinh^2u}{p+q+2\sqrt{pq}\cosh u}>0
    \qquad(u>0).
\]
\end{lemma}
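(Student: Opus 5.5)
We will derive the formula from the first-order conditions already established in the proof of Proposition~\ref{prop:variance-canonical}, and then reparametrize. By Proposition~\ref{prop:F-root}, the minimizer $(a_*,b_*)$ of $G_{p,q}$ is characterized by $I_{v_*}=(a_*,b_*)$ and $F(v_*)=1$. Since $p,q>0$, the function $f(x)=p/x+q/(1-x)$ blows up at both endpoints, so the interval is interior. Hence $f(a_*)=f(b_*)=v_*=G(p,q)$, and the area condition reads
\[
\int_{a_*}^{b_*}\bigl(v_*-f(x)\bigr)\,dx=1 .
\]
The plan is therefore to parametrize the one-parameter family of level sets $\{f=v\}$ for $v>\min f=(\sqrt p+\sqrt q)^2$. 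We then express $F(v)$ in closed form as a function of a parameter $u$, and check that $F(v)=1$ becomes $\mathsf P_{p,q}(u)=1$.

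\textbf{Parametrization.} Set $v=p+q+2\sqrt{pq}\cosh u$ with $u>0$. This is a strictly increasing bijection from $(0,\infty)$ onto $((\sqrt p+\sqrt q)^2,\infty)$. The roots of $vx^2-(v+p-q)x+p=0$ are $a,b$, with $ab=p/v$ and $(1-a)(1-b)=q/v$, as in \eqref{eq:ab-identities}. One should find the factorized forms
\[
a=\frac{\sqrt p(\sqrt p+\sqrt q\,e^{-u})}{v},\qquad b=\frac{\sqrt p(\sqrt p+\sqrt q\,e^{u})}{v},
\]
using $v=(\sqrt p+\sqrt q e^{u})(\sqrt p+\sqrt q e^{-u})$. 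Symmetric expressions hold for $1-a$ and $1-b$. These give
\[
b-a=\frac{2\sqrt{pq}\sinh u}{v},\qquad \log\frac ba=\log\frac{\sqrt p+\sqrt q e^{u}}{\sqrt p+\sqrt q e^{-u}},\qquad \log\frac{1-a}{1-b}=\log\frac{\sqrt q+\sqrt p e^{u}}{\sqrt q+\sqrt p e^{-u}}.
\]

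\textbf{Computing $F$.} Next compute
\[
F(v)=v(b-a)-p\log(b/a)-q\log\bigl((1-a)/(1-b)\bigr).
\]
The first term equals $2\sqrt{pq}\sinh u$. For the logarithms, write $\log\frac{\sqrt p+\sqrt q e^u}{\sqrt p+\sqrt q e^{-u}}=u+\log\frac{\sqrt p e^{-u}+\sqrt q}{\sqrt p+\sqrt q e^{-u}}$ and treat the other logarithm similarly. Collecting terms should give $-(p+q)u+(p-q)\log\frac{\sqrt p e^u+\sqrt q}{\sqrt p+\sqrt q e^u}$, which is exactly $\mathsf P_{p,q}(u)$. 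This algebraic bookkeeping, in particular getting the $(p-q)\log$ term with the right orientation, is the main obstacle. As a safeguard, I would verify it through the derivative rather than directly. Since $F'(v)=|I_v|=b-a$ and $dv/du=2\sqrt{pq}\sinh u$, we get
\[
\frac{d}{du}F=\frac{4pq\sinh^2u}{p+q+2\sqrt{pq}\cosh u}.
\]
One then checks that $\mathsf P_{p,q}'$ equals this by differentiating the explicit formula, and that both $F$ and $\mathsf P_{p,q}$ vanish at $u=0$. This proves $F(v(u))=\mathsf P_{p,q}(u)$ and yields the stated derivative formula at the same time.

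\textbf{Conclusion.} The derivative formula shows that $\mathsf P_{p,q}$ is strictly increasing on $(0,\infty)$, with $\mathsf P_{p,q}(0)=0$. Its leading term $2\sqrt{pq}\sinh u$ dominates the linear and bounded logarithmic terms, so $\mathsf P_{p,q}(u)\to\infty$. Hence a unique root $u_{p,q}$ of $\mathsf P_{p,q}=1$ exists. By Proposition~\ref{prop:F-root}(2)–(3), $v_*=v(u_{p,q})$, so $G(p,q)=p+q+2\sqrt{pq}\cosh u_{p,q}$.
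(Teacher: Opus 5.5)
Your proposal is correct and follows essentially the same route as the paper. It uses the same $\cosh$-parametrization $v=p+q+2\sqrt{pq}\cosh u$, the same roots $a(u),b(u)$, the same identification of $v(b-a)-p\log(b/a)-q\log\frac{1-a}{1-b}$ with $\mathsf P_{p,q}(u)$, and the same derivative $(b-a)\,\frac{dv}{du}$. The only difference is that you anchor optimality in the level-set identity $F(v_*)=1$ from Proposition~\ref{prop:F-root}, where the paper appeals to first-order stationarity conditions; your version makes the identification $G(p,q)=v(u_{p,q})$ slightly more airtight.
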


\begin{proof}
Let \(\alpha=\sqrt p\) and \(\delta=\sqrt q\).  For \(u>0\), define
\[
    a(u)=\frac{\alpha}{\alpha+\delta e^u},
    \qquad
    b(u)=\frac{\alpha e^u}{\alpha e^u+\delta}.
\]
Then \(0<a(u)<b(u)<1\), and both \(a(u)\) and \(b(u)\) solve
\[
    \lambda=\frac p x+\frac q{1-x}
\]
with
\[
    \lambda=p+q+2\sqrt{pq}\cosh u.
\]
Substituting these two roots into
\[
\lambda(b-a)
-p\log\frac ba
-q\log\frac{1-a}{1-b}
\]
gives exactly \(\mathsf P_{p,q}(u)\).  Thus the first-order conditions for the
interior minimizer of \eqref{eq:def-G} reduce to \(\mathsf P_{p,q}(u)=1\).
The objective diverges on the boundary when \(p,q>0\), so the minimizer is indeed
interior.

Finally, differentiating after this parametrization gives
\[
    \mathsf P_{p,q}'(u)
    =
    \left(b(u)-a(u)\right)\overbrace{\frac{d}{du}\pa{p+q+2\sqrt{pq}\cosh u}}^{2\sqrt{pq}\sinh u}
    =
    \frac{4pq\sinh^2u}{p+q+2\sqrt{pq}\cosh u}>0.
\]
Since \(\mathsf P_{p,q}(u)\to0\) as \(u\to 0^+\), and
\(\mathsf P_{p,q}(u)\to\infty\) as \(u\to\infty\), the equation
\(\mathsf P_{p,q}(u)=1\) has a unique solution.  The formula for \(G(p,q)\)
follows.
\end{proof}

\begin{proof}[Proof of \cref{lem:G-cert}]
The case \(p=q=0\) is immediate.  In the boundary case, \cref{lem:G-boundary}
shows that the exact value is \(q+\sqrt q\,u_q\), where \(u_q\) is the unique root
of \(\mathsf B_q\).  Since \(\mathsf B_q\) is increasing, the certified inequalities
force \(u_q\in[\underline u, \overline u]\).

In the interior case, \cref{lem:G-interior} shows that the exact value is
\(p+q+2\sqrt{pq}\cosh u_{p,q}\), where \(u_{p,q}\) is the unique root of
\(\mathsf P_{p,q}(u)-1\).  The factor multiplying \(\mathsf P_{p,q}(u)-1\) in
\eqref{eq:interior-residual} is positive for \(u>0\), so the signs of
\(\mathsf I_{p,q}\) and \(\mathsf P_{p,q}-1\) agree.  The certified inequalities
therefore imply \(u_{p,q}\in[\underline u, \overline u] \), and the claimed interval for \(G(p,q)\)
follows because \(u\mapsto p+q+2\sqrt{pq}\cosh u\) is increasing on \([0,\infty)\).
\end{proof}

\section*{Acknowledgments}
KT is partially supported by the NSF grant DMS 2331037. ChatGPT 5.5 Pro was used to prepare the figures and computational supplement. The idea of using the CLT from \cite{tree-CLT}, parts of the martingale construction in Proposition~\ref{prop:recursive-value-lower-bound}, and simplifications in several calculus computations arose in conversation with the model. ChatGPT was also used for routine editing and checking. The vast majority of the manuscript was completed before involving AI assistance.

\appendix 
\section{Deferred Proofs}

\begin{proof}[Proof of \eqref{eq:osalph}]
    Let $\alg^{\leq \ell} = (\tau^L_j)_{j=1}^{\ell}$ be an embedding strategy for $\pi^{\leq \ell}$ and $\alg^{> \ell} = (\tau^R_j)_{j=1}^{k - \ell}$ be an embedding strategy for $\pi^{> \ell}$. We define an embedding strategy $(\tau_i)_{i = 1}^k$ with the desired property.

    Note that we may assume $\EE[T_{\alg^{\leq \ell}}]<\infty$ and $\EE[T_{\alg^{> \ell}}]<\infty$ since otherwise the lemma follows by taking $(\tau_i)_{i = 1}^k$ as the trivial strategy, namely $\tau_i = \min\{j > \tau_{i-1}: X_j \in ((\pi(i)-1)/k, \pi(i)/k)\}$, which has expected embedding time of $k^2$.

    Now, since the extra randomness $X_0$ is a countable collection of iid variables, we can write $X_0 = Y_L \sqcup Y_R$ where $Y_L$ and $Y_R$ are independent and distributed the same as $X_0$. We work with the random sets $\Omega_L =\{i: X_i < \alpha\}$ and $\Omega_R = \{i: X_i > \alpha\}$, noting that $\Omega_L \cup \Omega_R = \NN$ with probability one. 
    
    Suppose that $\tau_1,\dots, \tau_i$ have been defined for some $i < k$, and assume that $\pi(i + 1) \leq \ell$ (if not, the discussion is the same with $L$ replaced by $R$, $\pi^{\leq \ell}$ replaced by $\pi^{> \ell}$, and $\alpha$ replaced by $1 - \alpha$). Moreover, suppose that up to this point we have constructed two independent lists of iid $[0,1]$ variables $\cS^L_i$ and $\cS^R_i$ which are of finite length a.s.; at the start, we let $\cS^L_1 =(Y_L)$ and $\cS^R_1 = (Y_R)$. Let $m$ be the index in $\pi^{\leq \ell}$ corresponding to $\pi(i+1)$, and let $g_m$ be the sequence of Borel decision functions defining $\tau^L_m$, as in \cref{sec:ov}. 
    Letting `+' be concatenation of two sequences, we define
    \[\tau_{i+1} = \min\{t: g_m(\cS^L_i + (X_j/\alpha)_{j \in \Omega_L \cap [\tau_i + 1, t]}) = 1\},\]
    and we set $\cS^L_{i+1} = \cS^L_i + (X_j/\alpha)_{j \in \Omega_L \cap [\tau_i + 1, \tau_{i+1}]}$ and $\cS^R_{i+1} = \cS^R_i$. Finally, once $\tau_k$, $\cS^L_k$, and $\cS^R_k$ have been defined, we let $\cS^L = \cS^L_k + (X_j/\alpha)_{j \in \Omega_L\cap [\tau_k + 1, \infty)}$ and $\cS^R = \cS^R_k + ((X_j-\alpha)/(1 - \alpha))_{j \in \Omega_R\cap [\tau_k + 1, \infty)}$. It is clear by construction that for each $i \in [k]$, $\tau_i$ is a stopping time for the sequence $X_0,X_1,\dots$, so it remains to understand $\EE\tau_k$.

    Since $\tau_k = \sum_{i=0}^{k-1} \tau_{i+1} - \tau_i$ (setting $\tau_0 = 0$ for convenience), it suffices by linearity of expectation to understand $\EE[\tau_{i+1} - \tau_i]$. Fixing $i$ as before and recalling that $m$ is the index in $\pi^{\leq \ell}$ corresponding to $\pi(i+1)$, we have
    \[\tau_{i+1} - \tau_i = |\Omega_L \cap [\tau_i + 1, \tau_{i+1}]| + |\Omega_R \cap [\tau_i + 1, \tau_{i+1}]|= \tau^L_m - \tau^L_{m-1} + |\Omega_R \cap [\tau_i + 1, \tau_{i+1}]|.\]
    Let $Z_1,Z_2,\dots$ be an iid collection of geometric variables with parameter $\alpha$. Then 
    \[|\Omega_R \cap [\tau_i + 1, \tau_{i+1}]| = \sum_{j=1}^{\tau^L_m - \tau^L_{m-1}} (Z_j - 1),\]
    so Wald's identity gives
    \[\EE |\Omega_R \cap [\tau_i + 1, \tau_{i+1}]| = \EE[\tau^L_m - \tau^L_{m-1}]\EE[Z_1 - 1].\]
    Together with the previous displayed line, we find that 
    \[\EE [\tau_{i+1} - \tau_i] = \EE[\tau^L_m - \tau^L_{m-1}] + \EE|\Omega_R \cap [\tau_i + 1, \tau_{i+1}]| =  \EE[\tau^L_m - \tau^L_{m-1}] + \EE[\tau^L_m - \tau^L_{m-1}]\EE[Z_1 - 1] = \EE[\tau^L_m - \tau^L_{m-1}]\EE[Z_1].\]
    Since $\EE Z_1 = 1/\alpha$, we find that 
    \[\EE [\tau_{i+1} - \tau_i] = \EE[\tau^L_m - \tau^L_{m-1}]/\alpha.\]
    The same discussion holds for any $i$ such that $\pi(i+1) \leq \ell$, so
    \[\sum_{i: \pi(i+1) \leq \ell}\EE [\tau_{i+1} - \tau_i] = \sum_{m = 1}^{\ell} \EE[\tau^L_m - \tau^L_{m-1}]/\alpha = \frac{\EE\,T_{\alg^{\leq \ell}}}{\alpha}.\]
    Similarly,
    \[\sum_{i: \pi(i+1) > \ell} \EE [\tau_{i+1} - \tau_i]=  \frac{\EE\,T_{\alg^{> \ell}}}{1 - \alpha}.\]
    Thus
    \[\EE\tau_k = \sum_{i=1}^{k-1} \EE [\tau_{i+1} - \tau_i] = \frac{\EE\,T_{\alg^{\leq \ell}}}{\alpha} +\frac{\EE\,T_{\alg^{> \ell}}}{1 - \alpha}.\]
    Conditioning on $X_0$, we may fix a seed value for which the conditional expected run-time is no larger than the displayed expectation. This gives a non-random embedding strategy with the same bound. Since $\beta(\pi)$ is the infimum expected embedding time over all such strategies, the result follows.
\end{proof}

\begin{proof}[Proof of Lemma \ref{lem:sqrt-rec-lower-slope}]
    By symmetry it suffices to prove the first inequality. The case \(x=0\)
follows from monotonicity of \(G_{p,q}(a,b)\) in \(p\), so assume \(x>0\). 

    For each $h$, let $(a,b) = (a_h, b_h)$ achieving the minimum in \eqref{eq:def-G1} for $G((x + h)^2, y^2)$ so that $G((x + h)^2, y^2) = G_{(x + h)^2, y^2}(a,b)$. Then since $G(x^2,y^2) \leq  G_{x^2, y^2}(a,b)$, we have
    \[
        \lim_{h\rightarrow 0^+} \frac{\Psi(x+h,y) - \Psi(x,y)}{h} \geq \lim_{h\rightarrow 0^+} \frac{\sqrt{G_{(x + h)^2, y^2}(a,b)} - \sqrt{G_{x^2, y^2}(a,b)}}{h}.
    \]
    For small $h$, we have 
    \begin{align*}
    \sqrt{G_{(x + h)^2, y^2}(a,b)} &= \sqrt{\frac{1+(x^2 + 2xh + h^2)\log(b/a)+y^2\log((1-a)/(1-b))}{b-a}}\\
    &> \sqrt{G_{x^2, y^2}(a,b)} + \frac{(xh - O(xh^2))\log(b/a)}{(b-a)\sqrt{G_{x^2, y^2}(a,b)}}
    \end{align*}
    so 
    \[\lim_{h\rightarrow 0^+} \frac{\sqrt{G_{(x + h)^2, y^2}(a,b)} - \sqrt{G_{x^2, y^2}(a,b)}}{h}  = \lim_{h\rightarrow 0^+} \frac{(xh - O(xh^2))\log(b/a)}{h(b-a)\sqrt{G_{x^2, y^2}(a,b)}} = \lim_{h\rightarrow 0^+} \frac{x\log(b/a)}{(b-a)\sqrt{G_{x^2, y^2}(a,b)}}.\]
    Now $a(h)$ and $b(h)$ are continuous functions of $h$ and $\frac{\log(b/a)}{(b-a)\sqrt{G_{x^2, y^2}(a,b)}}$ is a continuous function of $a$ and $b$ at $h=0$ (using $a > 0$ since $x >0$), so
    \beq{eq:sqrt-slope-logmean}
        \lim_{h\rightarrow 0^+} \frac{\Psi(x+h,y) - \Psi(x,y)}{h} \geq \frac{x\log(b/a)}{(b-a)\Psi(x,y)} \geq \frac{2x}{(a+b)\Psi(x,y)}.
    \enq
    It remains to bound $\frac{2x}{(a+b)\Psi(x,y)}$

Put
\[
    v:=\Psi(x,y)^2,
    \qquad
    \phi(t):=\phi_{x^2,y^2}(t).
\]
By \cref{prop:F-root},
\begin{equation}\label{eq:sqrt-slope-positive-part}
    1=\int_0^1 (v-\phi(t))_+\,dt .
\end{equation}
Let \(a\) and \(b\) be the endpoints of the interval $\{t\in[0,1]:\phi(t)<v\}.$
Since \(x>0\), we have \(a>0\). Restricting
\eqref{eq:sqrt-slope-positive-part} to this interval gives
\begin{equation}\label{eq:sqrt-slope-level}
    1=\int_a^b (v-\phi(t))\,dt .
\end{equation}

We claim that
\begin{equation}\label{eq:sqrt-slope-endpoints}
    x^2=vab,
    \qquad
    y^2=v(1-a)(1-b).
\end{equation}
If \(y>0\), these follow by solving \(\phi(a)=\phi(b)=v\). If \(y=0\), then
\(b=1\), and the identities reduce to \(x^2=va\) and
\(0=v(1-a)(1-b)\). Hence, for \(a<t<b\),
\begin{equation}\label{eq:sqrt-slope-gap}
    v-\phi(t)
    =
    v\,\frac{(t-a)(b-t)}{t(1-t)},
\end{equation}
where in the case \(y=0\), \(b=1\), the right-hand side is interpreted after
canceling the removable factor \(1-t\).

It remains to bound \(a+b\). Combining
\eqref{eq:sqrt-slope-level} and \eqref{eq:sqrt-slope-gap},
\[
    1
    =
    v\int_a^b
        \frac{(t-a)(b-t)}{t(1-t)}
    \,dt
    \ge
    v\int_a^b
        \frac{(t-a)(b-t)}{t}
    \,dt .
\]
With the substitution \(t=s^2\),
\[
\begin{aligned}
    \int_a^b\frac{(t-a)(b-t)}{t}\,dt
    &=
    2\int_{\sqrt a}^{\sqrt b}
        (s-\sqrt a)(\sqrt b-s)
        \frac{(s+\sqrt a)(\sqrt b+s)}{s}
    \,ds  \\
    &\ge
    2(\sqrt b-\sqrt a)
    \int_{\sqrt a}^{\sqrt b}
        (s-\sqrt a)(\sqrt b-s)
    \,ds  \\
    &=
    \frac{(\sqrt b-\sqrt a)^4}{3}.
\end{aligned}
\]
Thus
\[
    \Psi(x,y)(\sqrt b-\sqrt a)^2\le \sqrt3<2.
\]
Using \(x^2=vab\), equivalently \(x=\Psi(x,y)\sqrt{ab}\), we get
\[
\begin{aligned}
    \Psi(x,y)(a+b)
    &=
    \Psi(x,y)
    \left(2\sqrt{ab}+(\sqrt b-\sqrt a)^2\right)  \\
    &=
    2x+\Psi(x,y)(\sqrt b-\sqrt a)^2
    \le
    2(x+1).
\end{aligned}
\]
Together with
\eqref{eq:sqrt-slope-logmean}, this implies
\[
    \lim_{h\rightarrow 0^+} \frac{\Psi(x+h,y) - \Psi(x,y)}{h} 
    \ge
    \frac{x}{\Psi(x,y)}\cdot\frac{2}{a+b}
    \ge
    \frac{x}{x+1}.
\]
Applying this derivative bound with \(x\) replaced by \(u\), and integrating
over \(u\in[x,x+h]\) (noting absolute continuity of $\Psi$ in this interval), gives
\[
    \Psi(x+h,y)-\Psi(x,y)
    =
    \int_x^{x+h}
        \frac{\partial}{\partial u}\Psi(u,y)
    \,du
    \ge
    \int_x^{x+h}\frac{u}{u+1}\,du
    \ge
    \frac{x}{x+1}h .
\]
\end{proof}

\subsection{Embedding into random permutations}

Let $\sigma \in \Pi_n$ be a random permutation, and let $X_0$ denote a random variable, or a collection of random variables independent from $\sigma$. A permutation stream embedding of $\pi \in \Pi_k$ in $\sigma$ is a sequence of stopping times $1\leq \tau_1\leq\tau_2\leq\dots\leq\tau_k \leq n+1$ w.r.t.\ the
sequence $X_0,\sigma(1),\sigma(2)\dots, \sigma(n)$, such that either $(X_{\tau_j})_{j=1}^k$ is
order-isomorphic to $\pi\in \Pi_k$ or $\tau_k = n+1$. We sometimes write $\alg^{\sigma}$ to denote a stream embedding algorithm in a random permutation and $T_{\alg^\sigma}$ as the last stopping time of $\alg^\sigma$.
Let $\pbeta(\pi)$ be the smallest $n$ such that there is a stream embedding of $\pi$ in a random permutation of length $n$ satisfying $\PP{\tau_k \leq n} \geq 1/2$. Our goal is \cref{prop:Stopi}, which states that
    \[\pbeta(\pi) \le (1 + o_k(1))\beta(\pi).\]
\Cref{prop:Stopi} follows from an elegant coupling, introduced in \cite{permutation}, which shows that embedding in a random stream is at least as hard as embedding in a random permutation. We need the notion of a continuous finite stream embedding of $\pi \in \Pi_k$: suppose $X_1,\dots, X_n$ are iid uniform $[0,1]$ variables, and $X_0$ is a random variable, or a collection of random variables independent from $X_1,\dots, X_n$. A finite stream embedding of $\pi$ is a sequence of stopping times $1\leq \tau_1 \leq\tau_2\leq\dots\leq\tau_k \leq n+1$ w.r.t.\ the
sequence $X_0,X_1,\dots, X_n$, such that either $(X_{\tau_j})_{j=1}^k$ is
order-isomorphic to $\pi\in \Pi_k$ or $\tau_k = n+1$. We use $\alg^n$ to denote a finite stream embedding algorithm over the sequence $X_0,X_1,\dots, X_n$ and $T_{\alg^n}$ as the last stopping time of $\alg^n$. Let $\beta^f(\pi)$ be the smallest $n$ such that there is a continuous finite stream embedding of $\pi$ satisfying $\PP{\tau_k \leq n} \geq 1/2$.
\begin{lemma}\label{lem:couppiS}
    For any $\pi \in \Pi_k$,
    \[\pbeta(\pi) \leq \beta^f(\pi).\]
\end{lemma}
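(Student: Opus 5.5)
Fix $n:=\beta^f(\pi)$ and a continuous finite stream embedding $\alg^n=(\tau_1,\dots,\tau_k)$ over $X_0,X_1,\dots,X_n$ with $\PP{\tau_k\le n}\ge 1/2$. The goal is to build a permutation stream embedding $\alg^\sigma$ of $\pi$ into a uniformly random $\sigma\in\Pi_n$ with the same success probability, via the coupling of \cite{permutation}. The idea is to generate, online, a sequence of uniform variables whose relative order is exactly the relative order revealed by $\sigma$. Then $\alg^n$ can be run on this synthetic stream, and any selection it makes is order-isomorphic to the corresponding entries of $\sigma$.

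\textbf{Step 1 (sequential sampling).} At time $t$, the information revealed by $\sigma(1),\dots,\sigma(t)$ is, up to relabeling, the relative rank $r_t$ of $\sigma(t)$ among the first $t$ values. For a uniform $\sigma$, the ranks $r_1,\dots,r_n$ are independent with $r_t$ uniform on $[t]$ (the standard Lehmer-code fact). Conversely, the relative ranks of iid uniforms $X_1,\dots,X_n$ have the same law. Condition on the past values $Y_1,\dots,Y_{t-1}$ and on the rank $r_t$. Their conditional law gives $Y_t$ as a uniform draw from the $r_t$-th gap among the order statistics of $Y_1,\dots,Y_{t-1}$. Sample $Y_t$ this way, using fresh auxiliary randomness stored in the seed; since $X_0$ encodes countably many independent variables, include this randomness in the permutation algorithm's seed. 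By induction on $t$, the vector $(Y_1,\dots,Y_n)$ is iid uniform. Moreover, $Y_t$ is measurable with respect to $X_0,\sigma(1),\dots,\sigma(t)$, and $(Y_1,\dots,Y_n)\simeq(\sigma(1),\dots,\sigma(n))$ almost surely.

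\textbf{Step 2 (transfer).} Write $g_j^{(\ell)}$ for the Borel decision functions of $\alg^n$. Define $\tau_j^\sigma$ by applying them to $(X_0',Y_1,\dots,Y_\ell)$, where $X_0'$ is an independent part of the seed playing the role of the original seed. Each $\tau_j^\sigma$ is then a stopping time for $X_0,\sigma(1),\sigma(2),\dots$. The joint law of $(X_0',Y_1,\dots,Y_n)$ equals that of $(X_0,X_1,\dots,X_n)$, so $\PP{\tau_k^\sigma\le n}=\PP{\tau_k\le n}\ge 1/2$. On that event $(Y_{\tau_j^\sigma})_j\simeq\pi$, and hence $(\sigma(\tau^\sigma_j))_j\simeq\pi$ by Step 1. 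This gives $\pbeta(\pi)\le n$.

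\textbf{Main obstacle.} The delicate point is Step 1: verifying that the sequentially sampled $Y_t$ are exactly iid uniform and adapted to the permutation filtration. I would prove this by computing the joint density of $(Y_1,\dots,Y_t)$ inductively. The conditional density of $Y_t$ given the past and $r_t$ is $\frac{1}{\text{gap length}}$ on the chosen gap, and $r_t$ selects gap $g$ with probability $1/t$. The resulting marginal density of $Y_t$ given the past is not $1$; the correct construction instead chooses the gap with probability proportional to its length. So I would reverse the sampling direction. First sample $U_1,\dots,U_n$ iid uniform in the seed and let $\rho$ be their rank permutation; then $\rho$ is uniform and independent of the sorted values. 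Next couple $\rho=\sigma$, setting $Y_t:=U_{(\text{rank of }\sigma(t)\text{ in }\sigma)}$ using the sorted values, where the rank refers to the final ranks. Because final ranks are not adapted, I would use the construction from \cite{permutation}: at time $t$, sample $Y_t$ from the conditional law of $X_t$ given $X_1,\dots,X_{t-1}$ and given that $X_t$ has relative rank $r_t$. This law is uniform on the chosen gap, and the gap is chosen with probability equal to its length, which does not match $1/t$ pathwise. The resolution is that $(Y_1,\dots,Y_t)$ should be sampled jointly from the conditional law of iid uniforms given the rank pattern, which is the uniform law on the order simplex. Updating from $t-1$ to $t$ requires resampling earlier values, but resampling is harmless because the algorithm only ever uses relative orders, which are fixed. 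Checking this invariance cleanly is the step I expect to require the most care.
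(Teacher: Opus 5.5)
\textbf{There is a genuine gap.} Your Step~2 is fine. Step~1, however, is false as written (your own density computation in the last paragraph shows this), and the repair you settle on does not work.

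The root cause is a misreading of the model. In the paper, a permutation stream embedding uses stopping times with respect to $X_0,\sigma(1),\sigma(2),\dots$. So at time $t$ the algorithm sees the actual value $\sigma(t)\in[n]$, i.e.\ the \emph{final} rank of the $t$-th entry, not merely its relative rank $r_t$ among the first $t$ entries. If you work with $r_t$ alone, the construction is impossible. The rank $r_t$ is uniform on $[t]$ and independent of the past and of the seed. Hence any adapted $Y_t$ forced into the $r_t$-th gap falls in each gap with conditional probability $1/t$, not with probability equal to the gap length, and $(Y_t)$ cannot be iid uniform.

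Resampling $(Y_1,\dots,Y_t)$ from the uniform law on the order simplex at each step does not rescue this. A general continuous finite stream embedding decides through Borel functions $g_j^{(\ell)}$ of the actual values, not only of their relative order. For example, the canonical algorithm accepts $X_t$ exactly when it lands in a numerical window determined by the values already selected. Decisions already taken on the old values need not agree with what $\mathcal{A}^n$ would do on the resampled ones. So you are no longer running $\mathcal{A}^n$ on an iid uniform stream, and its success probability does not transfer.

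\textbf{The fix.} The construction you wrote down and then discarded is exactly the right one, and it is the paper's proof.
Place in the seed the order statistics $U_{(1)}<\dots<U_{(n)}$ of $n$ iid uniforms; the paper realizes them as normalized partial sums of $n+1$ iid exponentials. Take them independent of $\sigma$ and of a copy $X_0'$ of the original seed, and set $Y_t:=U_{(\sigma(t))}$.

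This $Y_t$ is measurable with respect to $X_0,\sigma(1),\dots,\sigma(t)$ precisely because $\sigma(t)$ is revealed at time $t$. So your objection that ``final ranks are not adapted'' does not apply here. The rank permutation of an iid uniform sample is uniform and independent of its order statistics. Therefore $(Y_1,\dots,Y_n)$ is iid uniform and independent of $X_0'$, and $(Y_{i_1},\dots,Y_{i_m})\simeq(\sigma(i_1),\dots,\sigma(i_m))$ for every choice of indices. With this Step~1, your Step~2 goes through verbatim.

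The extra information is essential rather than cosmetic. An algorithm seeing only relative ranks would know less than the iid-stream algorithm, not more, and the natural comparison would then run in the opposite direction.
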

\begin{proof}[Proof of \cref{lem:couppiS}]
    We use the coupling introduced in \cite{permutation} which allows for the transfer of a finite stream embedding algorithm to a permutation stream embedding. The coupling is based on the fact that if $(e_i)_{i=1}^{n+1}$ is an iid collection of exponential mean one variables, then the sequence $(Y_i)_{i=1}^n$ defined by 
    \[Y_i = \frac{e_1 + \dots + e_i}{e_1 + \dots + e_{n+1}}\]
    has the same distribution as the order statistics of $n$ iid uniform $[0,1]$ variables. Thus we can sample $n$ iid uniform $[0,1]$ variables $X_1,\dots, X_n$ by (i) sampling $(e_i)_{i=1}^{n+1}$, (ii) forming $(Y_i)_{i=1}^n$, (iii) sampling $\sigma \in \Pi_n$, and (iv) setting $X_i = Y_{\sigma(i)}$. 
    
    So, given a length $n$ continuous finite stream embedding $\alg^n = (\tau_i)_{i=1}^k$, we define a length $n$ permutation stream embedding $\alg^\sigma = (\tau_i')_{i=1}^k$ by setting
    \[\tau_i'(X_0,\sigma(1),\sigma(2)\dots, \sigma(n)) = \tau_i(X_0,Y_{\sigma(1)},Y_{\sigma(2)},\dots, Y_{\sigma(n)}).\]
    By construction, for any $i_1,\dots, i_m$, the sequence $(Y_{\sigma(i_j)})_{j = 1}^m$ has the same order type as $(\sigma(i_j))_{j=1}^m$, so the stopping times defined above give a well-defined permutation stream embedding algorithm.
    Taking $n=\beta^f(\pi)$ and choosing $\alg^n$ to witness the definition of $\beta^f(\pi)$, the coupled algorithm has the same success probability. Hence
    \[
        \pbeta(\pi)\le\beta^f(\pi).
    \]
\end{proof}

\begin{proof}[Proof of \cref{prop:Stopi}]
    In view of \cref{lem:couppiS}, it suffices to show that
    \[\beta^f(\pi) \le (1 + o_k(1))\beta(\pi).\]
    Consider the canonical algorithm $\alg^* = (\tau_i)_{i=1}^k$ achieving $\beta(\pi)$, and let $n = ( 1 + \eps)\beta(\pi)$. We may view $\alg^*$ as a length $n$ continuous finite stream embedding $\alg^n = (\tau_i')_{i=1}^k$ by setting $\tau_i' = n+1$ on the event $\tau_i > n$. \Cref{prop:variance-canonical} with Chebyshev's inequality implies
    \[\PP{\tau_k' = n + 1} = O(\eps^{-2}\beta(\pi)^{-1/2}).\]
    Since $\beta(\pi) \geq k^2/4$ (see \cref{thm:numerical-scaling}), we find that $\beta^f(\pi) \leq n$ if $\eps$ tends to zero slower than $k^{-1/2}$.
\end{proof}

\bibliographystyle{plain}
\bibliography{references}

\end{document}